\newtheorem{theorem}{Theorem}
\newtheorem{lemma}{Lemma}
\newtheorem{proposition}{Proposition}
\theoremstyle{definition}
\newtheorem{definition}{Definition}
\theoremstyle{remark}
\newtheorem{corollary}{Corollary}
\numberwithin{equation}{section}
\lstdefinestyle{derivation} {
    breakatwhitespace=false,         
    breaklines=true,                 
    captionpos=b,                    
    keepspaces=true,                 
    numbers=left,                    
    numbersep=5pt,
    escapechar=\&
}
\newenvironment{claim}[1]{\par\noindent\underline{Claim:}\space#1}{}
\newenvironment{claimproof}[1]{\par\noindent\underline{Proof:}\space#1}{\hfill $\blacksquare$\newline}
\begin{document}

\title{Dynamic extensions for the logic of knowing why with public announcements of formulas}

\author{Nicholas Pischke}
\address{Hoch-Weiseler Str. 46, Butzbach, 35510, Hesse, Germany}
\email{pischkenicholas@gmail.com}

\keywords{epistemic logic, logic of knowing why, dynamic logic, public announcement logic}

\begin{abstract}
In this paper, we address the \emph{logic of knowing why}, an example of a non-standard epistemic logic dealing with justified knowledge via a new epistemic operator, under the extensions with ideas from dynamic epistemic logic, namely public announcements. Through the additional notions present in the knowing why context, we consider two possible variants, namely the extensions by \emph{(i)}: public announcements of a formula and by \emph{(ii)}: public announcements of reasons, although the deeper analysis of the latter is left for future work. We consider another logical operator, the conditional knowing-why operator, for which we study the applications to the axiomatization of public announcements as well as the solely framework. At the end, we consider the logical expressivity of these different logics in comparison to each other, and thus we show one of the main problems with the usual process of proving completeness through translation in the context of logics with public announcements.
\end{abstract}

\maketitle

\section{Introduction}
The logic of knowing why, as introduced by Xu, Wang and Studer in \cite{XWS2016}, considers a synthesis of ideas very similar to justification logic together with the classical notions of epistemic logic to provide a framework for reasoning not only about knowing formulas but also knowing \emph{why} in the concrete sense of knowing explanations for formulas in all concerned worlds of an agent. For this, the authors introduced a new modal operator $Ky_a$ in extension to the basic epistemic framework. In contrast to justification logics however, the knowing-why operator is inherently unexplicit about the actual reasons in concerned situations.\\

The logic of knowing why is located in the framework of ideas for extensions of basic epistemic logic by more graded notions knowing, e.g. knowing who, knowing how, etc., so called non-standard epistemic logics, originating in Hintikka's serminal work \cite{Hin1962} and its sequels and again emerging as a subject of other recent contributions to epistemic logics, see e.g. \cite{Wan2018}. This synthesis of knowing and knowing why can also be found in other, conceptually different realizations, like in the context of justification logics in combination with standard epistemic logic as e.g. in \cite{AN2005}. Such a framework, capable of analyzing origins of agents knowledge deeper, promises interesting applications, especially after an enhancement by dynamic notions, in a same way as for these related systems, like in classical dynamic epistemic logic and e.g. for justification logic by Renne in e.g. \cite{Ren2008}, \cite{Ren2012} and \cite{Ren2011}. Although dynamic epistemic logic encompasses many different notions and systems, we focus on the concept of public announcements for the extension. While originally introduced by Plaza in his seminal work \cite{Pla1989}, we will manly adapt, and refer to, the presentation in the monograph \cite{DHK2007}.\\

As there are two different kinds of notions concerned, namely solely knowing and knowing why(or knowing reasons) we also can imagine two notions of public announcements from this context. Either publicly announcing \emph{that} a formula has to hold or even publicly announcing \emph{why} a formula has to hold. We will only examine the first concept in this paper, i.e. an enhancement of the logic of knowing why with a classical public announcement operator for formulas is considered. The second one could then follow the ideas and concepts presented by Eijck, Gattinger and Wang in \cite{EGW2016}. We then find that, in difference to usual ehancements by public announcement operators, the logic of knowing why bears some deeper semantical idiosyncrasies in cooperation with these notions, making a classical, that is in the context of public announcements a reduction style axiomatization impossible as they add expressive power. We then introduce a relativized version of the knowing-why operator, following suggestions made in \cite{XWS2016}, to provide a possible workaround for these problems. As a main goal, we provide an axiomatization of this latter logic which is then proved sound and complete with respect to the basic model classes presented in the original paper.\\

We then provide expressivity comparisons between the logic incorporating public announcements and the logic using the relativized knowing why operator. Together with this, we again address the issues with axiomatizing these different logics in a reduction sense.
\section{Preliminaries}
The main purpose of this section is to provide an overview of the work done about the \emph{logic of knowing why}, by Xu et. al. in \cite{XWS2016}, to create a common ground on which the dynamic extensions and other modifications later take place. For the following, it's assumed that the reader is familiar with the basic notions of propositional classical modal logic, Kripke-frames and the concepts of basic epistemic logic.
\subsection{The logic of knowing why}
The underlying language for the logic of knowing why, in the following denoted by \textbf{ELKy} in correspondence to the initial paper, is defined with the \emph{BNF}
\begin{equation*}
\mathcal{L}_{ELKy}:\phi ::= p\; |\; \neg\phi\; |\; (\phi\land\phi )\; |\; K_a\phi\; |\; Ky_a\phi
\end{equation*}
with $p\in\mathcal{P}$ and $a\in\mathcal{A}$. The sets $\mathcal{P}$ and $\mathcal{A}$ used here are respectively the sets for the atomic propositions, being countably infinite, and the countable set of agents. All other common connectives like $\rightarrow$, $\leftrightarrow$ as well as $\bot$ and $\top$ are defined in the same way as in classical propositional logic. Following the general notations of epistemic logic, the dual of the modal operator $K_a$ is defined as $\widehat{K}_a\phi =\neg K_a\neg\phi$. As it follows naturally, while $K_a$ reads like \emph{the agent $a$ knows ... is the case}, the new modal operator $Ky_a$ read like \emph{the agent $a$ knows why ... is the case}.\\

The semantics of the logic \textbf{ELKy} was defined in a classical modal model-theoretic sense with following the approach of Fitting models for justification logic. These originated in \cite{Fit2005} as a natural extension to the approach of a possible-world Kripke model in the context of classical modal logic, enhanced by additional functions to model local, that is world-specific, relationships between explanations and formulas. Followingly, an $\mathbf{ELKy}$-model $\mathfrak{M}=\langle W,E,\{ R_a\,|\, a\in\mathcal{A}\},\mathcal{E},V\rangle$ is defined over \emph{(i)} a non-empty set of worlds $W$, also called the domain $\mathcal{D}(\mathfrak{M})$ of $\mathfrak{M}$, \emph{(ii)} a non-empty set of explanations $E$, \emph{(iii)} an own accessibility relation $R_a\subseteq W\times W$ for each agent $a\in\mathcal{A}$, \emph{(iv)} an admissible explanation function $\mathcal{E}: E\times\mathcal{L}_{ELKy}\to 2^W$\footnote{Note that, in general for a set $X$, we identify $2^X$ with the power set of $X$.} and $\emph{(v)}$ a basic evaluation function $V:\mathcal{P}\to 2^W$. A pointed version of a model, i.e. a combination of a model and a designated world is then denoted by $(\mathfrak{M},w)$.\\

As it was defined in the initial paper, the set $E$, holding the possible explanations for formulas, has to satisfy two conditions, namely \emph{(1)} holding a designated explanation $e$ and \emph{(2)} being closed under a explanation-combination operator $\cdot :E\times E\to E$, i.e. $s,t\in E$ implying $(s\cdot t)\in E$. With $E$ being independent of an agents view, it can be seen as an omnipresent domain of explanations.\\

The explanation function $\mathcal{E}$, relating the worlds $w\in W$ to a formula $\phi$ and an explanation $s$, in the sense of $s$ being an explanation of $\phi$ at some world $w$, also has to fulfill two conditions about its behavior, namely \emph{(1)} whenever a formula $\phi$ is in a designated set $\Lambda$, it holds that $\mathcal{E}(e,\phi )=W$ and \emph{(2)} the function $\mathcal{E}$ distributes over $\cdot$-application in combination with a \emph{modus ponens} style inference with $\mathcal{E}(s,\phi\rightarrow\psi )\cap\mathcal{E}(t,\phi)\subseteq\mathcal{E}((s\cdot t),\psi)$. This designated set $\Lambda$, introduced by the authors in the original paper, is called the \emph{tautology ground}, simply a set consisting of valid formulas, which represent a fixed argumentation ground for all agents which are regarded as self-evidently true.\footnote{Following from this, the special explanation $e$ is called the self-evident explanation.} This stands in similarity with constant specifications in justification logics, and in a concrete sense, it helps the agents to make more justified conclusions.\\

The accessibility relation $R_a$ for each agent $a$ is in the following required to be a so called $\mathcal{S}5$ relation, i.e. being \emph{(i)} reflexive, that is for all $w$, $(w,w)\in R_a$, \emph{(ii)} transitive, that is for all $w,u,v$, if $(w,u)\in R_a$ and $(u,v)\in R_a$, then $(w,v)\in R_a$ and \emph{(iii)} symmetric, that is for all $w,u$, if $(w,u)\in R_a$ then $(u,w)\in R_a$. Although in the realm of modal logics there are many other classes of frames and models of particular interest, defined e.g. over different restrictions of the accessibility relations, the main emphasis will be on those $\mathcal{S}5$ models.\footnote{Note, that the corresponding class of models is denoted by $\mathcal{K}y\mathcal{S}5$, while the basic class of models with no restrictions for the accessibility relations is simply denoted by $\mathcal{K}y$.}\\

Local satisfiability, that is the validity of a formula in a specific world $w$ of a model $\mathfrak{M}$, is then recursively defined over the relation $\models$ with
\begin{align*}
&(\mathfrak{M},w)\models p\text{ iff }w\in V(p) &&(\mathfrak{M},w)\models\neg\phi\text{ iff }(\mathfrak{M},w)\not\models\phi\\
&(\mathfrak{M},w)\models\phi\land\psi\text{ iff }(\mathfrak{M},w)\models\phi\text{ and }(\mathfrak{M},w)\models\psi &&(\mathfrak{M},w)\models K_a\phi\text{ iff }\forall v\in W:(w,v)\in R_a\text{ implies }(\mathfrak{M},v)\models\phi
\end{align*}
for the classical operators from epistemic logic, and with
\begin{align*}
&(\mathfrak{M},w)\models Ky_a\phi\text{ iff (1): }(\mathfrak{M},w)\models K_a\phi\text{ and (2): }\exists t\in E:\forall v\in W:(w,v)\in R_a\text{ implies that }v\in\mathcal{E}(t,\phi ) &&\quad\quad\;
\end{align*}
for the new operator $Ky_a$. The case of a formula being valid in all worlds $w$ of a model $\mathfrak{M}$ is simply denoted by $\mathfrak{M}\models\phi$.\\

The notions for \emph{local} semantic deduction of a formula $\phi$ from a set of formulas $\Gamma$ in a specific model class $\mathbb{M}$, $\Gamma\models_\mathbb{M}\phi$, as well as the formal proof in a Hilbert-style axiomatic system $\mathbb{S}$, $\Gamma\vdash_\mathbb{S}\phi$, are defined as usual in the context of modal propositional logics.\\

In \cite{XWS2016}, the authors proposed an axiomatic system, the system $\mathbb{SKY}$ shown below, for which they've proved soundness and completeness with respect to \emph{local} semantic deduction in the class of all $\mathcal{S}5$-$\mathbf{ELKy}$-models defined as above.\footnote{The presentation of the theorems and corresponding proofs are omitted here(s. \emph{Theorem 13} and \emph{Theorem 23} in \cite{XWS2016}).}
\begin{definition}
The system $\mathbb{SKY}$ is given by the following axioms and rules:
\begin{description}[leftmargin=!,labelwidth=\widthof{($(NKy)$):}]
\item [$(PT)$] $\text{the classical propositional axioms}$
\item [$(K)$] $K_a(\phi\rightarrow\psi )\rightarrow (K_a\phi\rightarrow K_a\psi )$
\item [$(Ky)$] $Ky_a(\phi\rightarrow\psi )\rightarrow (Ky_a\phi\rightarrow Ky_a\psi )$
\item [$(T)$] $K_a\phi\rightarrow\phi$
\item [$(4)$] $K_a\phi\rightarrow K_aK_a\phi$
\item [$(5)$] $\neg K_a\phi\rightarrow K_a\neg K_a\phi$
\item [$(PS)$] $Ky_a\phi\rightarrow K_a\phi$
\item [$(4YK)$] $Ky_a\phi\rightarrow K_aKy_a\phi$
\item [$(MP)$] $\text{From }\phi\text{ and }\phi\rightarrow\psi\text{, infer }\psi$
\item [$(NK)$] $\text{From }\vdash\phi\text{, infer }\vdash K_a\phi$
\item [$(NKy)$] $\text{From }\phi\in\Lambda\text{, infer }\vdash Ky_a\phi$
\end{description}
\end{definition}
\section{Public announcements of formulas}
As the main topic of this paper, we will now consider the logic of knowing why under the extensions with the operator for public announcements of formulas. The basic notions of this new operator are defined according to the common notions of classical public announcement logic $\mathbf{PA}$. We will denote this new logic with $\mathbf{PAFKy}$ where $PAF$ emphasizes the announcement of formulas additionally.
\begin{definition}
For a countable set of agents $\mathcal{A}$, a countably infinite set of atomic propositions $\mathcal{P}$, the language of the logic $\mathbf{PAFKy}$ is defined with
\begin{equation*}
\mathcal{L}_{PAFKy}:\phi ::= p\; |\;\neg\phi\; |\; (\phi\land\phi )\; |\; K_a\phi\; |\; Ky_a\phi\; |\; [\phi ]\phi
\end{equation*}
where $p\in\mathcal{P}$ and $a\in\mathcal{A}$.
\end{definition}
Verbally, the construct $[\phi ]\psi$ translates to ``after the public announcement of the formula $\phi$, $\psi$  holds``. The semantics of this augmented logic are again defined over the same $\mathcal{S}5$-relational knowing-why models $\mathfrak{M}$, being constructed as presented in the preliminaries. We extend the satisfaction relation $\models$ for the handling of this new operator with
\begin{equation*}
(\mathfrak{M},w)\models [\phi]\psi\text{ iff }(\mathfrak{M},w)\models \phi\text{ implies }(\mathfrak{M}|\phi ,w)\models\psi
\end{equation*}
The construct $(\mathfrak{M}|\phi ,w)$ represents the pointed version of an \emph{updated} model, $\mathfrak{M}|\phi$, with the following construction.
\begin{definition}
The model $\mathfrak{M}$ after the public announcement of $\phi$, $\mathfrak{M}|\phi =\langle W',E,\{ R'_a\, |\,a\in\mathcal{A}\} ,\mathcal{E}', V'\rangle$, is defined with
\begin{align*}
&W' = \{ w\in W\, |\, (\mathfrak{M},w)\models\phi\}\\
&R'_a = R_a\cap (W'\times W')\text{ f.a. }a\in\mathcal{A}\\
&V'(p) = V(p)\cap W'\text{ f.a. }p\in\mathcal{P}\\
&\mathcal{E}'(t,\phi)=\mathcal{E}(t,\phi)\cap W'\text{ f.a. }t\in E,\phi\in\mathcal{L}_{PAFKy}
\end{align*}
\end{definition}
Note, that $E$ from the previous model is not changed in the ``update`` process, as we view the set of explanations as disconnected from the classical Kripkean part of the model, being omnipresent in a given larger context and only interacting with the possible-worlds part of the model via the function $\mathcal{E}$. For the right hand definition of $W'$, we will write $\llbracket\phi\rrbracket_{\mathfrak{M}}$ in the following. One can also imagine the operator $|$ for the update of a model by a publicly announced formula $\phi$ as the application of a function $| :\mathcal{K}y\mathcal{S}5\times\mathcal{L}_{PAFKy}\to\mathcal{K}y\mathcal{S}5$.\\\\
The corresponding dual to this operator $[\phi]$ is denoted by $\langle\phi\rangle$ and semantically defined by the following:
\begin{equation*}
(\mathfrak{M},w)\models\langle\phi\rangle\psi\text{ iff }(\mathfrak{M},w)\models\phi\text{ and }(\mathfrak{M}|\phi,w)\models\psi
\end{equation*}
Due to a theorem of Plaza \cite{Pla1989}, it is well known that public announcements in the context of classical epistemic logic do not add expressive power. Moreover, there is a process to reduce constructs of the form $[\phi]\psi$ to equivalent compounds of basic epistemic modal formulas.\\

Followingly, a major result is the axiomatization of the classical public announcement logic $\mathbf{PA}$ with a reduction-style Hilbert calculus. It immediately arises as a natural question whether this property can be recovered in the context of knowing why.\\

For the purpose of a better insight to the problem, we consider the following expansion of the semantic evaluations of a formula of the form $[\phi]Ky_a\psi$.
\begin{align*}
&\text{Let }\mathfrak{M}\text{ and }w\in W\text{ be arbitrary:}                     &&\\
&(\mathfrak{M},w)\models [\phi]Ky_a\psi                                            &&\Leftrightarrow\\
&(\mathfrak{M},w)\models\phi\text{ implies }(\mathfrak{M}|\phi ,w)\models Ky_a\psi &&\Leftrightarrow\\
&(\mathfrak{M},w)\models\phi\text{ implies }((\mathfrak{M}|\phi,w)\models K_a\psi\text{ and }\exists t\in E:\forall v\in W': (w,v)\in R'_a\text{ implies }v\in\mathcal{E}'(t,\psi ))                             &&\\
\end{align*}
From this equivalences, we suppose that there is no basic formula in the language of the logic $\mathbf{ELKy}$ which is able to express this matter equally over a reduction into a compound of more simple epistemic formulas, e.g. via pushing the announcements further into the formula, since modifying $\psi$ results in some problems concerning the integrity of $\mathcal{E}$ in the evaluation, which focuses on the syntactic structure of the concerned formula. We concretize this assumption through the following theorem.
\begin{theorem}\label{thm:pafkygreaterelky}
$\mathbf{PAFKy}$ is more expressive than $\mathbf{ELKy}$.
\end{theorem}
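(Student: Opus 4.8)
The plan is to prove the stronger statement that some single $\mathbf{PAFKy}$-formula is not equivalent, over the class $\mathcal{K}y\mathcal{S}5$, to any $\mathbf{ELKy}$-formula. Since $\mathcal{L}_{ELKy}$ is literally a sublanguage of $\mathcal{L}_{PAFKy}$ and the two logics agree on the satisfaction clauses for all connectives of $\mathcal{L}_{ELKy}$, the logic $\mathbf{PAFKy}$ is trivially at least as expressive as $\mathbf{ELKy}$; hence it suffices to exhibit two pointed $\mathcal{S}5$-$\mathbf{ELKy}$-models that satisfy exactly the same $\mathbf{ELKy}$-formulas but are separated by a suitable $\mathbf{PAFKy}$-formula, and to choose as witness the $\mathbf{PAFKy}$-formula $[p]Ky_a q$ with $p,q\in\mathcal{P}$ distinct atoms.

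Concretely, I would build $\mathfrak{M}$ and $\mathfrak{N}$ on one and the same two-world Kripke skeleton: worlds $w_1,w_2$ (resp.\ $v_1,v_2$), universal accessibility $R_a=W\times W$ (which is $\mathcal{S}5$), valuation making $p$ true only at $w_1$ (resp.\ $v_1$) and $q$ true at both worlds, one-point explanation set $E=\{e\}$ with $e\cdot e=e$, and empty tautology ground $\Lambda=\emptyset$. The two models differ only in $\mathcal{E}$: put $\mathcal{E}_{\mathfrak{N}}(e,\chi)=\emptyset$ for every formula $\chi$, while $\mathcal{E}_{\mathfrak{M}}(e,q)=\{w_1\}$ and $\mathcal{E}_{\mathfrak{M}}(e,\chi)=\emptyset$ for every $\chi\neq q$. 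Both explanation functions satisfy the two required closure conditions: the first is vacuous since $\Lambda=\emptyset$, and the second holds because the only formula whose $\mathcal{E}$-image is ever nonempty is the atom $q$, and no implication $\phi\to\psi$ is syntactically the atom $q$, so the left-hand side of the inclusion is always empty. Thus $\mathfrak{M},\mathfrak{N}\in\mathcal{K}y\mathcal{S}5$, and one checks similarly that any update $\mathfrak{M}|\phi$, $\mathfrak{N}|\phi$ again lies in $\mathcal{K}y\mathcal{S}5$.

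Next I would show by induction on $\chi\in\mathcal{L}_{ELKy}$ that $(\mathfrak{M},w_i)\models\chi$ iff $(\mathfrak{N},v_i)\models\chi$ for $i\in\{1,2\}$. The atomic, Boolean and $K_a$ cases are routine because the skeletons are isomorphic via $w_i\mapsto v_i$. The key point is that $Ky_a\chi$ is false at every world of both models for every $\mathbf{ELKy}$-formula $\chi$: in $\mathfrak{N}$ because reflexivity forces condition (2) of the truth clause for $Ky_a$ to require $w\in\mathcal{E}_{\mathfrak{N}}(e,\chi)=\emptyset$, and in $\mathfrak{M}$ because, with $R_a$ universal, condition (2) requires $\mathcal{E}_{\mathfrak{M}}(e,\chi)=W$, which fails as $\mathcal{E}_{\mathfrak{M}}(e,\chi)$ is always $\emptyset$ or the proper subset $\{w_1\}$. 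Hence $(\mathfrak{M},w_1)$ and $(\mathfrak{N},v_1)$ agree on all $\mathbf{ELKy}$-formulas. Finally I would compute directly that $(\mathfrak{M},w_1)\models[p]Ky_a q$ while $(\mathfrak{N},v_1)\not\models[p]Ky_a q$: announcing $p$ restricts each model to its unique $p$-world, at which $K_a q$ holds; in $\mathfrak{M}|p$ the explanation $e$ now witnesses condition (2) since $w_1\in\mathcal{E}_{\mathfrak{M}}(e,q)\cap\{w_1\}$, whereas in $\mathfrak{N}|p$ condition (2) still fails. Therefore $[p]Ky_a q$ has no $\mathbf{ELKy}$-equivalent, and $\mathbf{PAFKy}$ is strictly more expressive than $\mathbf{ELKy}$.

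I do not expect a genuine obstacle here; the only points needing care are verifying that the hand-built explanation functions — and those of the updated models — really meet the closure conditions on $\mathcal{E}$, and checking that the induction over $\mathbf{ELKy}$-formulas collapses the $Ky_a$-case uniformly, i.e.\ that no nesting of $Ky_a$, $K_a$ and Booleans exposes the difference in $\mathcal{E}$ that only surfaces after the restriction. Taking $\Lambda=\emptyset$ and keeping $\mathcal{E}$ trivial on all formulas except the single atom $q$ in $\mathfrak{M}$ is precisely what makes both checks immediate.
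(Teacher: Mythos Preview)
Your argument is correct, but it follows a different route from the paper's. The paper builds two models on \emph{different} Kripke skeletons (two worlds versus three, with the smaller a submodel of the larger), uses $[q]Ky_a p$ as the separating formula, and argues---somewhat informally---that no $\mathcal{L}_{ELKy}$-formula can ``restrict $Ky_a p$ to the $q$-worlds'': any attempt to express the announcement effect must embed $Ky_a p$ as a subformula, and none of $\neg,\land,K_a,Ky_a$ can relativize that subformula to a proper subset of the accessible worlds. You instead keep the Kripke skeleton \emph{identical} in both models and perturb only $\mathcal{E}$, arranging (via $E=\{e\}$, $\Lambda=\emptyset$, and $\mathcal{E}$ essentially zero) that $Ky_a\chi$ is uniformly false at every world of both models for every $\chi$; the $\mathbf{ELKy}$-equivalence then follows by a completely routine induction, and the announcement exposes the single non-empty value $\mathcal{E}_{\mathfrak{M}}(e,q)=\{w_1\}$.

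Your approach buys a cleaner, fully rigorous indistinguishability argument: once $Ky_a$ collapses to $\bot$ everywhere, the two pointed models are literally isomorphic as far as $\mathcal{L}_{ELKy}$ can see, and no hand-waving about ``what subformulas must occur'' is needed. The paper's construction, on the other hand, better dramatizes the \emph{reason} reduction fails---namely that announcement can shrink the set of accessible worlds to one where a common explanation suddenly exists, a phenomenon invisible to the static language---but its indistinguishability step is left as a sketch. One small point worth flagging in your write-up: taking $\Lambda=\emptyset$ is what makes the $\mathcal{E}$-closure conditions vacuous; the paper treats $\Lambda$ as a fixed parameter of the logic and its own proof tacitly does the same (setting ``the rest'' of $\mathcal{E}$ to $\emptyset$), so you are in good company, but it is worth stating explicitly that the separation already holds for the minimal choice of tautology ground.
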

\begin{proof}
We sketch the model-theoretic considerations. Since $\mathbf{PAFKy}$ is an extension of $\mathbf{ELKy}$, it can't be less expressive. Now, consider the following two models $\mathfrak{M}_1$, $\mathfrak{M}_2$.\\

For $\mathfrak{M}_1$, we consider a set of worlds containing two elements $w_1,w_2$, we assume that $E$ contains two basic elements $s,t$ besides the usual conditions, $R_a$ is expected to be total for all agents $a$. For $\mathcal{E}$, we set $\mathcal{E}(s,p)=\{w_1\}, \mathcal{E}(t,p)=\{w_2\}$ while the rest are set to $\emptyset$. For the basic evaluation function $V$, we finally set $V(p)=\{w_1,w_2\}$ and $V(q)=\{w_1\}$. Visually, this model may be imagined as the following:
\begin{center}
\begin{tikzpicture}[shorten >=1pt,node distance=4cm,on grid,auto]
\node[state, label=above left:$w_1$, align=center] (w_1) {$p,q$ \\ $s:p$};
\node[state, label=above left:$w_2$, align=center] (w_2) [right=of w_1] {$p$ \\ $t:p$};
\path[-] (w_1) edge node {$a$} (w_2);
\end{tikzpicture}
\end{center}
Suppose that $\mathfrak{M}_1$ is a submodel of $\mathfrak{M}_2$. Therefore, we will only mention the additional settings. We consider an augmented set of worlds by a third world $w_3$, together with a third contained basic explanation $r\in E$. $R_a$ is still considered to be total for all agents and we additionally require $\mathcal{E}(r,p)=\{w_3\}$ and $V(p)=\{w_1,w_2,w_3\}$, $V(q)=\{w_1,w_3\}$. We may imagine this second model in a slightly reduced representation as:
\begin{center}
\begin{tikzpicture}[shorten >=1pt,node distance=2cm and 4cm,on grid,auto]
\node[state, label=above left:$w_1$, align=center] (w_1) {$p,q$ \\ $s:p$};
\node[state, label=above left:$w_2$, align=center] (w_2) [right=of w_1] {$p$ \\ $t:p$};
\node[state, label=above left:$w_3$, align=center] (w_3) [below=of w_1] {$p,q$ \\ $r:p$};
\path[-] (w_1) edge node {$a$} (w_2)
               edge node {$a$} (w_3);
\end{tikzpicture}
\end{center}
In $\mathbf{PAFKy}$, we may distinguish these model through the formula $[q]Ky_a p$ in the designed world $w_1$ contained in both models. For $\mathfrak{M}_1$, we find that the only world left after the announcement made in $w_1$ is $w_1$ itself which provides $(\mathfrak{M}_1,w_1)\models [q]Ky_a p$. For $\mathfrak{M}_2$, we find that $W^q_2=\{w_1,w_3\}$ after the announcement. Through $\bigcap_{t\in E}\mathcal{E}_2(t,p)=\emptyset$, we find that $\not\exists t\in E: \forall v\in W^q_2: v\in\mathcal{E}(t,p)$, i.e. $(\mathfrak{M}_2,w_1)\not\models [q]Ky_a p$.\\

To show that there exists no $\phi\in\mathcal{L}_{ELKy}$ which can distinguish between the reasons from only $w_1$ and $w_3$, we suppose the opposite. If so, then $\phi = q\rightarrow \Theta$ for some $\Theta\in\mathcal{L}_{ELKy}$. By $q$ used in the implication, we exclude the case of $q$ being un-announceable in $w_1$. By the semantics of $[q]Ky_a p$, $\Theta$ would then have to model the behavior of $(\mathfrak{M}|q,w_1)\models Ky_a p$, i.e. the behavior of $Ky_a p$ restricted to all $q$-worlds.\\

By a simple induction on the structure of $\Theta$, it can be seen that no such formula exists, since to express something about the reasons of $p$ in reachable worlds, the only possibility is to include $Ky_a p$ as some subformula which itself can not be limited to some subset of worlds over the use of $\neg,\land ,K_a$ or $Ky_a$(which form the induction steps).
\end{proof}
It became apparent that although the sub-case $[\phi]K_a\psi$ is expressible over an adequate translation, we have no other possibility of expressing something concerning the explanation function $\mathcal{E}$ than the operator $Ky_a\psi$ and for this purpose, there can be no modification of $\psi$ as it would mess with the before mentioned integrity of $\mathcal{E}$ by changing the concerned formula for the existence of explanations, i.e. there is no way to restrict the application of $\mathcal{E}$ uniformly to a subset of worlds. Through this theorem, we can obviusly not apply Plaza's method from \cite{Pla1989} to provide completeness over reduction.
\subsection{A relativized knowing-why operator}
To address this problem, we're following the ideas of \cite{BEK2006}, where the authors used the concept of relativization for similar problems concerning common knowledge, and of \cite{WF2013}, \cite{WF2014} from the context of non-classical epistemic logics, by relativizing the $Ky$ operator, turning it into a conditional version, namely $Ky_a^r(\phi,\psi)$, with the following semantics:
\begin{align*}
(\mathfrak{M},w)\models Ky_a^r(\phi,\psi)&\text{ iff }\exists t\in E:\forall v\in W\text{ such that }(w,v)\in R_a\text{ and }(\mathfrak{M},v)\models\phi\text{ it is that}\\ &\text{ (1): }v\in\mathcal{E}(t,\psi)\text{ and (2): }(\mathfrak{M},v)\models\psi
\end{align*}
This operator relates to ``the agent $a$ knows why $\psi$, under the condition $\phi$``. Clearly, the original, unary, operator $Ky_a\phi$ corresponds to $Ky_a^r(\top,\phi)$.

Two new versions of the logics presented before, namely $ELKy^r$ and following to this $PAFKy^r$ are directly emerging from this, simply with
\begin{equation*}
\mathcal{L}_{ELKy^r}:\phi ::= p\; |\;\neg\phi\; |\; (\phi\land\phi)\; |\;K_a\phi\; |\;Ky_a^r(\phi,\phi)
\end{equation*}
with $p\in\mathcal{P}$ and $a\in\mathcal{A}$ and $\mathcal{L}_{PAFKy^r}$ being simply the augmentation of $\mathcal{L}_{ELKy^r}$ with the notion of the $[\phi]$ operator as shown before. The class of models associated with this new logic, named $\mathcal{K}y^r\mathcal{S}5$ for the $\mathcal{S}5$-relational version, are structurally similar to the $\mathcal{K}y\mathcal{S}5$-models presented before, while $Ky$ and its semantic evaluation in $\models$ are replaced with $Ky^r$ and the corresponding definition above.
\begin{proposition}\label{prop:axioms1}
The following formulas are valid in the class of all $\mathcal{S}5$-$ELKy^r$-models.
\begin{multicols}{2}
\begin{enumerate}[(i)]
\item $Ky_a^r(\phi,\psi\rightarrow\chi )\rightarrow (Ky_a^r(\phi,\psi)\rightarrow Ky_a^r(\phi,\chi))$
\item $Ky_a^r(\phi,\psi)\rightarrow K_a(\phi\rightarrow\psi)$
\item $Ky_a^r(\phi,\psi)\rightarrow K_aKy_a^r(\phi,\psi)$
\item $Ky_a^r(\psi,\chi)\land K_a(\phi\rightarrow\psi)\rightarrow Ky_a^r(\phi,\chi)$
\item $K_a\neg\phi\rightarrow Ky_a^r(\phi,\psi)$
\end{enumerate}
\end{multicols}
\end{proposition}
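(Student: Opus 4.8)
The plan is to verify each of the five formulas directly from the truth condition for $Ky_a^r$: I would fix an arbitrary $\mathcal{S}5$-$ELKy^r$-model $\mathfrak{M}$ and world $w$, assume the antecedent holds at $(\mathfrak{M},w)$, and produce a suitable explanation $t\in E$ witnessing the consequent. The only structural facts that get used are \emph{(a)} the closure of $E$ under $\cdot$ together with the modus-ponens-style inclusion $\mathcal{E}(s,\phi\rightarrow\psi)\cap\mathcal{E}(t,\phi)\subseteq\mathcal{E}((s\cdot t),\psi)$, \emph{(b)} transitivity of each $R_a$, and \emph{(c)} non-emptiness of $E$ (it contains the self-evident explanation $e$).

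For \emph{(i)}, assuming $(\mathfrak{M},w)\models Ky_a^r(\phi,\psi\rightarrow\chi)$ with witness $s$ and $(\mathfrak{M},w)\models Ky_a^r(\phi,\psi)$ with witness $t$, I would propose $s\cdot t$ as the witness for $Ky_a^r(\phi,\chi)$: for every $v$ with $(w,v)\in R_a$ and $(\mathfrak{M},v)\models\phi$, the two hypotheses give $v\in\mathcal{E}(s,\psi\rightarrow\chi)\cap\mathcal{E}(t,\psi)\subseteq\mathcal{E}((s\cdot t),\chi)$ and, by propositional reasoning at $v$, $(\mathfrak{M},v)\models\chi$. For \emph{(ii)}, from a witness $t$ for $Ky_a^r(\phi,\psi)$ one reads off immediately that every $R_a$-successor of $w$ satisfies $\phi\rightarrow\psi$ (if it satisfies $\phi$, clause (2) applies; otherwise the implication is vacuous), which is exactly $K_a(\phi\rightarrow\psi)$. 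For \emph{(iv)}, given a witness $t$ for $Ky_a^r(\psi,\chi)$ and $(\mathfrak{M},w)\models K_a(\phi\rightarrow\psi)$, the same $t$ witnesses $Ky_a^r(\phi,\chi)$, since any $R_a$-successor of $w$ satisfying $\phi$ also satisfies $\psi$ and hence falls under the two clauses guaranteed by $t$. For \emph{(v)}, if $(\mathfrak{M},w)\models K_a\neg\phi$ then no $R_a$-successor of $w$ satisfies $\phi$, so the universally quantified conditions in the truth condition for $Ky_a^r(\phi,\psi)$ are vacuous and any explanation — e.g. $e$ — serves as a witness.

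The only step needing a little care is \emph{(iii)}: assuming $(\mathfrak{M},w)\models Ky_a^r(\phi,\psi)$ with witness $t$, I would show this same $t$ witnesses $Ky_a^r(\phi,\psi)$ at every $R_a$-successor $v$ of $w$. Indeed, if $(v,u)\in R_a$ and $(\mathfrak{M},u)\models\phi$, transitivity of $R_a$ gives $(w,u)\in R_a$, so the witness property at $w$ yields $u\in\mathcal{E}(t,\psi)$ and $(\mathfrak{M},u)\models\psi$; hence $(\mathfrak{M},v)\models Ky_a^r(\phi,\psi)$ for all such $v$, i.e. $(\mathfrak{M},w)\models K_aKy_a^r(\phi,\psi)$. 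I do not anticipate a real obstacle; the one point to watch throughout is that the existential explanation in the consequent is produced \emph{uniformly} (as $s\cdot t$ in \emph{(i)}, and as the unchanged $t$ in \emph{(iii)} and \emph{(iv)}), not as something depending on the particular successor world chosen.
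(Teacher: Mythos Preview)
Your proposal is correct and follows essentially the same route as the paper's own proof: each item is verified by unfolding the truth condition for $Ky_a^r$, using the combined witness $s\cdot t$ for \emph{(i)}, the unchanged witness $t$ for \emph{(iii)} and \emph{(iv)} (via transitivity and the $K_a(\phi\rightarrow\psi)$ hypothesis respectively), and the vacuous witness from non-emptiness of $E$ for \emph{(v)}. Your explicit remark that the witness must be chosen uniformly in $v$ is exactly the point the paper's argument relies on implicitly.
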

\begin{proof}
In the following proofs, let $\mathfrak{M}$ be an arbitrary model and $w\in\mathcal{D}(\mathfrak{M})$ an arbitrary world.
\begin{enumerate}[(i)]
\item Suppose $(\mathfrak{M},w)\models Ky_a^r(\phi,\psi\rightarrow\chi)$ and $(\mathfrak{M},w)\models Ky_a^r(\phi,\psi)$. By the rules of $\models$, the first one translates to $\exists t\in E:\forall v\in W: (w,v)\in R_a$ and $(\mathfrak{M},v)\models\phi$ implies $(\mathfrak{M},v)\models\psi\rightarrow\chi$ and $v\in\mathcal{E}(t,\psi\rightarrow\chi)$. The second one the translates to $\exists s\in E:\forall v\in W: (w,v)\in R_a$ and $(\mathfrak{M},v)\models\phi$ implies $(\mathfrak{M},v)\models\psi$ and $v\in\mathcal{E}(s,\psi)$. By meet over $W,R_a,E$ and $\phi$, we have: $\exists s,t\in E:\forall v\in W: (w,v)\in R_a$ and $(\mathfrak{M},w)\models\phi$ implies $v\in\mathcal{E}(t,\psi\rightarrow\chi)$, $v\in\mathcal{E}(s,\psi)$ and $(\mathfrak{M},v)\models\psi$ and $(\mathfrak{M},v)\models\psi\rightarrow\chi$. From \emph{(MP)}, we have implied that $(\mathfrak{M},v)\models\chi$. With this, and $v\in\mathcal{E}(t,\psi\rightarrow\chi)\cap\mathcal{E}(s,\psi)\subseteq\mathcal{E}(s\cdot t,\chi)$, we have $(\mathfrak{M},w)\models Ky_a^r(\phi,\chi)$.
\item Suppose $(\mathfrak{M},w)\models Ky_a^r(\phi,\psi)$. By this, we have $\exists t\in E:\forall v\in W:(w,v)\in R_a$ and $(\mathfrak{M},v)\models\phi$ implies $(\mathfrak{M},v)\models\psi$ and $v\in\mathcal{E}(t,\psi)$. By cutting out $E$, we have $\forall v\in W:(w,v)\in R_a$ and $(\mathfrak{M},v)\models\phi$ implies $(\mathfrak{M},v)\models\psi$. This implies propositionally, that $\forall v\in W:(w,v)\in R_a$ implies $((\mathfrak{M},v)\models\phi$ implies $(\mathfrak{M},v)\models\psi)$, i.e. $(\mathfrak{M},w)\models K_a(\phi\rightarrow\psi)$.
\item Suppose that $(\mathfrak{M},w)\models Ky_a^r(\phi,\psi)$. Now, we consider any $v\in W$ with $(w,v)\in R_a$ and $u\in W$ with $(v,u)\in R_a$. By the transitivity of the relations, we have $(w,u)\in R_a$ implied. By $(w,u)\in R_a$ for every such $u$, we have $\exists t\in E:\forall u\in W:(v,u)\in R_a$ and $(\mathfrak{M},u)\models\phi$ implies $(\mathfrak{M},u)\models\psi$ and $u\in\mathcal{E}(t,\psi)$, therefore $(\mathfrak{M},v)\models Ky_a^r(\phi,\psi)$ for every such $v$ where $(w,v)\in R_a$, and thus we have $(\mathfrak{M},w)\models K_aKy_a^r(\phi,\psi)$.
\item Suppose that $(\mathfrak{M},w)\models Ky_a^r(\psi,\chi)$ and $(\mathfrak{M},w)\models K_a(\phi\rightarrow\psi)$, i.e. $\forall v\in W: (w,v)\in R_a$ impl. $(\mathfrak{M},v)\models\phi\rightarrow\psi$. By the first, there exists a $t\in E$ s.t. $\forall v\in W:(w,v)\in R_a$ and $(\mathfrak{M},v)\models\psi$ impl. $(\mathfrak{M},v)\models\chi$ and $v\in\mathcal{E}(t,\chi)$. Take this $t$, and any $v\in W$ s.t. $(w,v)\in R_a$ and $(\mathfrak{M},v)\models\phi$. Thus, $(\mathfrak{M},v)\models\psi$ and thus $v\in\mathcal{E}(t,\chi)$ and $(\mathfrak{M},v)\models\chi$. Thus $(\mathfrak{M},w)\models Ky_a^r(\phi,\chi)$ by taking the meet over $W$.
\item Suppose that $\forall v\in W:(w,v)\in R_a$ impl. $(\mathfrak{M},v)\models\neg\phi$, i.e. $(\mathfrak{M},v)\not\models\phi$. Thus, there is no world s.t. $(w,v)\in R_a$ and $(\mathfrak{M},v)\models\phi$, i.e. $\forall v\in W:(w,v)\in R_a$ and $(\mathfrak{M},v)\models\phi$ implies $(\mathfrak{M},v)\models\psi$ and thus $\exists t\in E:\forall v\in W:(w,v)\in R_a$ and $(\mathfrak{M},v)\models\phi$ implies $(\mathfrak{M},v)\models\psi$ and $v\in\mathcal{E}(t,\psi)$ as $E$ is non-empty. Thus $(\mathfrak{M},w)\models Ky_a^r(\phi,\psi)$.
\end{enumerate}
\end{proof}
We can even consider a stronger version of the distribution of the $Ky^r$ operator over $\rightarrow$ in the right argument by taking different premises into account.
\begin{proposition}\label{prop:ckyr}
$Ky_a^r(\chi,\phi\rightarrow\psi)\land Ky_a^r(\theta,\phi)\rightarrow Ky_a^r(\chi\land\theta, \psi)$ is valid in $\mathcal{K}y^r\mathcal{S}5$.
\end{proposition}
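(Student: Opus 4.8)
The plan is to argue directly from the semantics of $Ky_a^r$, in the same spirit as the proof of item (i) of Proposition \ref{prop:axioms1}, the only structural difference being that the two conjuncts in the antecedent now carry \emph{different} side conditions, $\chi$ and $\theta$, rather than a common one. First I would fix an arbitrary model $\mathfrak{M}\in\mathcal{K}y^r\mathcal{S}5$ and an arbitrary world $w\in\mathcal{D}(\mathfrak{M})$, and assume $(\mathfrak{M},w)\models Ky_a^r(\chi,\phi\rightarrow\psi)$ and $(\mathfrak{M},w)\models Ky_a^r(\theta,\phi)$. Unfolding $\models$, the first assumption yields some $s\in E$ such that for every $v\in W$ with $(w,v)\in R_a$ and $(\mathfrak{M},v)\models\chi$ we have $v\in\mathcal{E}(s,\phi\rightarrow\psi)$ and $(\mathfrak{M},v)\models\phi\rightarrow\psi$; the second yields some $t\in E$ such that for every $v\in W$ with $(w,v)\in R_a$ and $(\mathfrak{M},v)\models\theta$ we have $v\in\mathcal{E}(t,\phi)$ and $(\mathfrak{M},v)\models\phi$.

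Next I would propose $s\cdot t\in E$ as the witnessing explanation for $Ky_a^r(\chi\land\theta,\psi)$ and verify the two clauses for an arbitrary $v\in W$ with $(w,v)\in R_a$ and $(\mathfrak{M},v)\models\chi\land\theta$. The role of the conjunction in the side condition is precisely that such a $v$ satisfies \emph{both} $\chi$ and $\theta$, so both instantiations above apply to it simultaneously: one obtains $(\mathfrak{M},v)\models\phi\rightarrow\psi$ and $(\mathfrak{M},v)\models\phi$, hence $(\mathfrak{M},v)\models\psi$ by a semantic \emph{modus ponens}; and one obtains $v\in\mathcal{E}(s,\phi\rightarrow\psi)\cap\mathcal{E}(t,\phi)$, hence $v\in\mathcal{E}(s\cdot t,\psi)$ by the closure condition $\mathcal{E}(s,\phi\rightarrow\psi)\cap\mathcal{E}(t,\phi)\subseteq\mathcal{E}(s\cdot t,\psi)$ imposed on $\mathcal{E}$. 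Since $v$ was arbitrary, this gives $(\mathfrak{M},w)\models Ky_a^r(\chi\land\theta,\psi)$, and since $\mathfrak{M}$ and $w$ were arbitrary, the implication is valid in $\mathcal{K}y^r\mathcal{S}5$.

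I do not expect a genuine obstacle here: the argument uses none of the $\mathcal{S}5$ conditions on $R_a$ and would already go through in $\mathcal{K}y^r$. The only point requiring a bit of care is the bookkeeping of the \emph{order} of the combination operator — the explanation for the conditional $\phi\rightarrow\psi$ must appear as the left factor and the explanation for $\phi$ as the right factor, so that the instance of the $\mathcal{E}$-distribution condition matches exactly with these $s,t$; swapping them would not line up with the stated closure property. It is also worth noting in passing that item (i) of Proposition \ref{prop:axioms1} is the special case $\chi=\theta$ of the present statement, so Proposition \ref{prop:ckyr} is indeed a strengthening.
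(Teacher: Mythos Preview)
Your proposal is correct and follows essentially the same argument as the paper's own proof: unfold the semantics of the two $Ky_a^r$ hypotheses, take any accessible $v$ satisfying $\chi\land\theta$, apply both instantiations, use semantic modus ponens for clause~(2), and the closure condition on $\mathcal{E}$ for clause~(1) to obtain the combined explanation as witness. The paper's proof differs only cosmetically (it names the two explanations $t$ and $s$ rather than your $s$ and $t$), and it likewise remarks afterwards that item~(i) of Proposition~\ref{prop:axioms1} is the special case $\chi=\theta$.
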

\begin{proof}
Suppose $(\mathfrak{M},w)\models Ky_a^r(\chi,\phi\rightarrow\psi)$ and $(\mathfrak{M},w)\models Ky_a^r(\theta,\phi)$. The first one translates to $\exists t\in E:\forall v\in W: (w,v)\in R_a$ and $(\mathfrak{M},v)\models\chi$ implies $(\mathfrak{M},v)\models\phi\rightarrow\psi$ and $v\in\mathcal{E}(t,\phi\rightarrow\psi)$, while the second translates to $\exists s\in E:\forall v\in W: (w,v)\in R_a$ and $(\mathfrak{M},v)\models\theta$ implies $(\mathfrak{M},v)\models\phi$ and $v\in\mathcal{E}(s,\phi)$. By meet over $W$, $R_a$ and $E$, we have $\exists t,s\in E:\forall v\in W:(w,v)\in R_a$ and $(\mathfrak{M},v)\models\chi\land\theta$ implies $(\mathfrak{M},v)\models\phi\rightarrow\psi$ and $(\mathfrak{M},v)\models\phi$ and $v\in\mathcal{E}(t,\phi\rightarrow\psi)\cap\mathcal{E}(s,\phi)$. By the laws of modus ponens, we have $(\mathfrak{M},v)\models\psi$ for the latter and by the properties of $\mathcal{E}$, we have $\mathcal{E}(t,\phi\rightarrow\psi)\cap\mathcal{E}(s,\phi)\subseteq\mathcal{E}(t\cdot s,\psi)$, therefore $(\mathfrak{M},w)\models Ky_a^r(\chi\land\theta,\psi)$.
\end{proof}
If two different premises $\chi$ and $\theta$ in the formulas $Ky_a^r(\chi,\phi\rightarrow\psi)$ and $Ky_a^r(\theta,\phi)$ are actually contradictory to each other, e.g. $p$ and $\neg p$, the formula $Ky_a(\chi\land\theta,\psi)$ automatically relates to the validity $Ky_a^r(\bot,\phi)$.\\

The validity \emph{(i)} from Prop. \ref{prop:axioms1} is obviously semantically a special case of Prop. \ref{prop:ckyr} with both $\chi$ and $\theta$ representing the same formula.
\subsection{An axiomatization of $ELKy^r$}
As an axiomatization for the logic $\mathbf{ELKy^r}$, we propose the here shown Hilbert-calculus as an adaption of the system $\mathbb{SKY}$ from the initial paper to the new notion of the relativized knowing-why operator. In correspondence to the before-mentioned paper, we call this system $\mathbb{SKYR}$.
\begin{definition}[The system $\mathbb{SKYR}$]
The proof system $\mathbb{SKYR}$ is defined as the following Hilbert-style calculus:
\begin{description}[leftmargin=!,labelwidth=\widthof{($(DKyR)$):}]
\item [$(PT)$] $\text{the classical propositional axioms}$
\item [$(K)$] $K_a(\phi\rightarrow\psi )\rightarrow (K_a\phi\rightarrow K_a\psi)$
\item [$(T)$] $K_a\phi\rightarrow\phi$
\item [$(4)$] $K_a\phi\rightarrow K_aK_a\phi$
\item [$(5)$] $\neg K_a\phi\rightarrow K_a\neg K_a\phi$
\item [$(EKyR)$] $Ky_a^r(\chi,\phi\rightarrow\psi)\rightarrow(Ky_a^r(\theta,\phi)\rightarrow Ky_a^r(\chi\land\theta,\psi))$
\item [$(4YKR)$] $Ky_a^r(\phi,\psi)\rightarrow K_aKy_a^r(\phi,\psi)$
\item [$(DKyR)$] $Ky_a^r(\phi,\psi)\rightarrow K_a(\phi\rightarrow\psi)$
\item [$(IKyR)$] $Ky_a^r(\psi,\chi)\rightarrow(K_a(\phi\rightarrow\psi)\rightarrow Ky_a^r(\phi,\chi))$
\item [$(UKyR)$] $K_a\neg\phi\rightarrow Ky_a^r(\phi,\psi)$
\item [$(MP)$] $\text{From }\phi\text{ and }\phi\rightarrow\psi\text{, infer }\psi$
\item [$(NK)$] $\text{From}\vdash\phi\text{, infer}\vdash K_a\phi$
\item [$(NKyR)$] $\text{From }\phi\in\Lambda\text{, infer}\vdash Ky_a^r(\top,\phi)$
\end{description}
\end{definition}
The axiom \emph{(EKyR)} provides, as mentioned before, a stronger version of the distribution of $Ky^r$ over $\rightarrow$ in the right argument by using different premises in the left argument. The axiom \emph{(DKyR)} defines the decomposition or extraction of the fragment concerning the basic knowledge operator $K$ from $Ky^r$ and axiom \emph{(4YKR)} provides the positive introspection of $Ky^r$ by the classical operator $K$. \emph{(IKyR)} allows inference of knowing why for stronger premises, provided the case for the weaker premise is already established and \emph{(UKyR)} describes the situation if the condition of the $Ky^r$-operator is \emph{impossible} from a current world, i.e. if every reachable world does not satisfy the condition.\\

One may wonder about the axiom \emph{(5YKR)}, the negative introspection of $Ky^r$ by the operator $K$, which is actually provable, exactly as its unconditioned companionen \emph{(5YK)} was in the basic system $\mathbb{SKY}$(s. Proposition 11, \cite{XWS2016}).
\begin{proposition}
The following formulas are provable in $\mathbb{SKYR}$:
\begin{enumerate}
\item $\neg Ky_a^r(\phi,\psi)\rightarrow K_a\neg Ky_a^r(\phi,\psi)$
\item $Ky_a^r(\bot,\phi)$
\item $Ky_a^r(\phi,\psi\rightarrow\chi)\rightarrow (Ky_a^r(\phi,\psi)\rightarrow Ky_a^r(\phi,\chi))$
\end{enumerate}
\end{proposition}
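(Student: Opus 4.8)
The plan is to derive all three formulas purely syntactically inside $\mathbb{SKYR}$, reusing for \emph{(1)} the argument by which \emph{(5YK)} is obtained from \emph{(5)} and \emph{(4YK)} in $\mathbb{SKY}$, and for \emph{(2)} and \emph{(3)} combining \emph{(UKyR)}, \emph{(EKyR)} and \emph{(IKyR)} with propositional reasoning together with \emph{(NK)}.

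For \emph{(2)}, I would instantiate \emph{(UKyR)} with $\bot$ in the first argument to obtain $K_a\neg\bot\rightarrow Ky_a^r(\bot,\phi)$. Since $\neg\bot$ is a propositional tautology, \emph{(PT)} gives $\vdash\neg\bot$, hence $\vdash K_a\neg\bot$ by \emph{(NK)}, and then $\vdash Ky_a^r(\bot,\phi)$ follows by \emph{(MP)}. For \emph{(3)}, I would instantiate \emph{(EKyR)} with both left arguments set to $\phi$ (and the right arguments $\psi\rightarrow\chi$ respectively $\psi$), which yields $Ky_a^r(\phi,\psi\rightarrow\chi)\rightarrow(Ky_a^r(\phi,\psi)\rightarrow Ky_a^r(\phi\land\phi,\chi))$; it then remains only to collapse $\phi\land\phi$ to $\phi$ in the left argument. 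For this I would apply \emph{(IKyR)} with $\phi\land\phi$ as the weaker premise and $\phi$ as the stronger one: since $\phi\rightarrow\phi\land\phi$ is a propositional tautology, $\vdash K_a(\phi\rightarrow\phi\land\phi)$ by \emph{(PT)} and \emph{(NK)}, so \emph{(IKyR)} and \emph{(MP)} give $Ky_a^r(\phi\land\phi,\chi)\rightarrow Ky_a^r(\phi,\chi)$, and composing the two implications propositionally finishes the case.

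For \emph{(1)}, the negative introspection, I would chain three implications. First, from \emph{(T)} instantiated at $Ky_a^r(\phi,\psi)$ and contraposition, $\neg Ky_a^r(\phi,\psi)\rightarrow\neg K_aKy_a^r(\phi,\psi)$. Second, \emph{(5)} instantiated at $Ky_a^r(\phi,\psi)$ gives $\neg K_aKy_a^r(\phi,\psi)\rightarrow K_a\neg K_aKy_a^r(\phi,\psi)$. Third, contraposing \emph{(4YKR)} yields the theorem $\neg K_aKy_a^r(\phi,\psi)\rightarrow\neg Ky_a^r(\phi,\psi)$; applying \emph{(NK)} to this theorem and then distributing with \emph{(K)} gives $K_a\neg K_aKy_a^r(\phi,\psi)\rightarrow K_a\neg Ky_a^r(\phi,\psi)$. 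Composing the three implications propositionally produces $\neg Ky_a^r(\phi,\psi)\rightarrow K_a\neg Ky_a^r(\phi,\psi)$.

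I expect no genuine obstacle; the only delicate point is the bookkeeping in \emph{(1)}, where \emph{(NK)} must be applied to the contrapositive of \emph{(4YKR)} only after it has been recognized as a theorem, and $K_a$ distributed with \emph{(K)} afterwards — performing these steps in the right order is the one place where a naive attempt could misfire. Everything else reduces to instantiating the listed axioms and routine propositional manipulation, and one should remark in passing that \emph{(3)} is exactly the analogue of \emph{(Ky)} from $\mathbb{SKY}$, now obtained as a consequence of the stronger axiom \emph{(EKyR)}.
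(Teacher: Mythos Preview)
Your proposal is correct and follows essentially the same route as the paper: for \emph{(1)} you chain contraposed \emph{(T)}, axiom \emph{(5)}, and the $K_a$-distributed contrapositive of \emph{(4YKR)} exactly as the paper does; for \emph{(2)} you use \emph{(UKyR)} with $\neg\bot$ necessitated; and for \emph{(3)} you instantiate \emph{(EKyR)} at equal left arguments and then collapse $\phi\land\phi$ to $\phi$ via \emph{(IKyR)} together with $K_a(\phi\rightarrow\phi\land\phi)$, which is precisely the paper's derivation.
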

\begin{proof}
In the following, although \emph{(PT)} references the usual propositional axioms, we let it denote any theorem of the basic classical propositional calculus (over this new language).
\begin{enumerate}
\item As a line derivation: \begin{lstlisting}[mathescape=true]
$K_aKy_a^r(\phi,\psi)\rightarrow Ky_a^r(\phi,\psi)$ &\Comment{Instance of \emph{(T)}}&
$\neg Ky_a^r(\phi,\psi)\rightarrow \neg K_aKy_a^r(\phi,\psi)$ &\Comment{Contraposition of 1}&
$\neg K_aKy_a^r(\phi,\psi)\rightarrow K_a\neg K_aKy_a^r(\phi,\psi)$ &\Comment{Instance of \emph{(5)}}&
$Ky_a^r(\phi,\psi)\rightarrow K_aKy_a^r(\phi,\psi)$ &\Comment{Instance of \emph{(4YKR)}}&
$\neg K_aKy_a^r(\phi,\psi)\rightarrow\neg Ky_a^r(\phi,\psi)$ &\Comment{Contraposition of 4}&
$K_a(\neg K_aKy_a^r(\phi,\psi)\rightarrow\neg Ky_a^r(\phi,\psi))$ &\Comment{\emph{(NK)} on 5}&
$K_a\neg K_aKy_a^r(\phi,\psi)\rightarrow K_a\neg Ky_a^r(\phi,\psi)$ &\Comment{\emph{(MP)} with \emph{(K)} on 6}&
$\neg Ky_a^r(\phi,\psi)\rightarrow K_a\neg K_aKy_a^r(\phi,\psi)$ &\Comment{\emph{(MP)} on 2,3}&
$\neg Ky_a^r(\phi,\psi)\rightarrow K_a\neg Ky_a^r(\phi,\psi)$ &\Comment{\emph{(MP)} on 7,8}&
\end{lstlisting}
\item As a line derivation: \begin{lstlisting}[mathescape=true]
$\neg\bot$ &\Comment{Instance of \emph{(PT)}}&
$K_a\neg\bot$ &\Comment{\emph{(NK)} on 1}&
$K_a\neg\bot\rightarrow Ky_a^r(\bot,\phi)$ &\Comment{Instance of \emph{(UKyR)}}&
$Ky_a^r(\bot,\phi)$ &\Comment{\emph{(MP)} on 2,3}&
\end{lstlisting}
\item As a line derivation: \begin{lstlisting}[mathescape=true]
$Ky_a^r(\phi,\psi\rightarrow\chi)\rightarrow(Ky_a^r(\phi,\psi)\rightarrow Ky_a^r(\phi\land\phi,\chi))$ &\Comment{Instance of \emph{(EKyR)}}&
$Ky_a^r(\phi,\psi\rightarrow\chi)\land Ky_a^r(\phi,\psi)\rightarrow Ky_a^r(\phi\land\phi,\chi)$ &\Comment{\emph{(MP)} and \emph{(PT)} on 1}&
$\phi\rightarrow(\phi\land\phi)$ &\Comment{Instance of \emph{(PT)}}&
$K_a(\phi\rightarrow(\phi\land\phi))$ &\Comment{\emph{(NK)} on 3}&
$Ky_a^r(\phi\land\phi,\chi)\rightarrow (K_a(\phi\rightarrow(\phi\land\phi))\rightarrow Ky_a^r(\phi,\chi))$ &\Comment{Instance of \emph{(IKyR)}}&
$K_a(\phi\rightarrow(\phi\land\phi))\rightarrow (Ky_a^r(\phi\land\phi,\chi)\rightarrow Ky_a^r(\phi,\chi))$ &\Comment{\emph{(MP)} and \emph{(PT)} on 5}&
$Ky_a^r(\phi\land\phi,\chi)\rightarrow Ky_a^r(\phi,\chi)$ &\Comment{\emph{(MP)} on 4,6}&
$Ky_a^r(\phi,\psi\rightarrow\chi)\land Ky_a^r(\phi,\psi)\rightarrow Ky_a^r(\phi,\chi)$ &\Comment{\emph{(MP)} on 2,7}&
$Ky_a^r(\phi,\psi\rightarrow\chi)\rightarrow (Ky_a^r(\phi,\psi)\rightarrow Ky_a^r(\phi,\chi))$ &\Comment{\emph{(MP)} and \emph{(PT)} on 8}&
\end{lstlisting}
\end{enumerate}
\end{proof}
From this point on, we can almost immediately consider the soundness of our system $\mathbb{SKYR}$ by finally proposing the following.
\begin{lemma}\label{lem:rlnkyr}
The rule \emph{(NKyR)} is valid.
\end{lemma}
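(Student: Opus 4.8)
The plan is to check soundness of the rule directly against the semantics: I must show that if $\phi\in\Lambda$, then $Ky_a^r(\top,\phi)$ is true at every world of every $ELKy^r$-model (the $\mathcal{S}5$ frame conditions will play no role, so this works for the whole class $\mathcal{K}y^r\mathcal{S}5$ and indeed for $\mathcal{K}y^r$ as well). So fix an arbitrary model $\mathfrak{M}=\langle W,E,\{R_a\mid a\in\mathcal{A}\},\mathcal{E},V\rangle$, a world $w\in\mathcal{D}(\mathfrak{M})$ and an agent $a\in\mathcal{A}$. Unfolding the satisfaction clause for $Ky^r$ with first argument $\top$, and noting that $(\mathfrak{M},v)\models\top$ holds vacuously for every $v$, it suffices to produce a single $t\in E$ such that for all $v\in W$ with $(w,v)\in R_a$ we have both (1) $v\in\mathcal{E}(t,\phi)$ and (2) $(\mathfrak{M},v)\models\phi$.

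I would take the witness $t$ to be the designated self-evident explanation $e\in E$, which exists by the first defining condition on $E$. Condition (1) is then immediate from the first condition imposed on the admissible explanation function: since $\phi\in\Lambda$, we have $\mathcal{E}(e,\phi)=W$, hence $v\in\mathcal{E}(e,\phi)$ for every $v\in W$, a fortiori for every $R_a$-successor of $w$. For condition (2) I would appeal to the standing assumption, recalled in the preliminaries, that the tautology ground $\Lambda$ consists only of valid formulas; consequently $\phi\in\Lambda$ gives $\mathfrak{M}\models\phi$, i.e. $(\mathfrak{M},v)\models\phi$ at every $v\in W$, covering in particular all $R_a$-successors of $w$. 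Combining the two, $e$ witnesses $(\mathfrak{M},w)\models Ky_a^r(\top,\phi)$; as $\mathfrak{M}$, $w$ and $a$ were arbitrary, $Ky_a^r(\top,\phi)$ is valid, which is precisely what it means for the rule \emph{(NKyR)} to preserve validity.

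There is essentially no obstacle here: the proof is a one-line unfolding of the clause for $Ky^r$. The only points requiring a little care are the precise reading of $\Lambda$ — membership in it is a syntactic side condition of the rule, but semantically its elements are by hypothesis valid, which is what delivers (2) — and the observation that the single explanation $e$ simultaneously discharges the $\mathcal{E}$-requirement via the axiom $\mathcal{E}(e,\phi)=W$ for $\phi\in\Lambda$, so that one does not need to combine explanations via $\cdot$ at all.
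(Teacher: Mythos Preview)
Your proof is correct and follows essentially the same approach as the paper: both arguments fix an arbitrary model and world, use that $\Lambda$ contains only valid formulas to obtain $(\mathfrak{M},v)\models\phi$ everywhere, and invoke the condition $\mathcal{E}(e,\phi)=W$ for $\phi\in\Lambda$ to supply the designated explanation $e$ as the uniform witness for the $Ky^r$-clause.
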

\begin{proof}
Let $\phi\in\Lambda$ and $\mathfrak{M}$ be any model. Since $\Lambda$ only contains tautologies, we have $(\mathfrak{M},w)\models\phi$ f.a. $w\in W$. By the first rule established for the behavior of $\mathcal{E}$, we have that $\exists e\in E:\mathcal{E}(e,\phi)= W$. Thus, for every $w\in W$, we have that $(\mathfrak{M},w)\models\phi$ and $w\in\mathcal{E}(e,\phi)$, i.e. we have $\exists t\in E:\forall v\in W: (w,v)\in R_a$ and $(\mathfrak{M},v)\models\top$ implies $(\mathfrak{M},v)\models\phi$ and $v\in\mathcal{E}(t,\phi)$, i.e. $(\mathfrak{M},w)\models Ky_a^r(\top,\phi)$ and this for any $w\in W$.
\end{proof}
\begin{lemma}
The generalized necessitation rule for $Ky_a^r$, i.e. 
\[\text{From }\phi\in\Lambda\text{, infer }\vdash Ky_a^r(\psi,\phi)\]
is admissible for any $\psi\in\mathcal{L}_{ELKy^r}$.
\end{lemma}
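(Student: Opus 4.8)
The plan is to reduce the general case back to the already-available rule $(NKyR)$, using the axiom $(IKyR)$, which lets us strengthen the premise of a $Ky^r$-formula once the case for a weaker premise is in hand together with the relevant implication between premises. Since $\top$ is the weakest possible premise and every formula $\psi$ propositionally implies $\top$, the arbitrary premise $\psi$ can always be introduced in this way; the rest is a short Hilbert derivation.

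Concretely, I would argue as follows. Assume $\phi\in\Lambda$. First, apply $(NKyR)$ to obtain $\vdash Ky_a^r(\top,\phi)$. Next, note that $\psi\rightarrow\top$ is a propositional theorem, so $\vdash\psi\rightarrow\top$ by $(PT)$, and hence $\vdash K_a(\psi\rightarrow\top)$ by $(NK)$. Now instantiate $(IKyR)$ — in the schema $Ky_a^r(\psi',\chi')\rightarrow(K_a(\phi'\rightarrow\psi')\rightarrow Ky_a^r(\phi',\chi'))$ — with $\psi':=\top$, $\chi':=\phi$ and $\phi':=\psi$, which gives
\[ \vdash Ky_a^r(\top,\phi)\rightarrow\bigl(K_a(\psi\rightarrow\top)\rightarrow Ky_a^r(\psi,\phi)\bigr). \]
Two applications of $(MP)$ — first with $\vdash Ky_a^r(\top,\phi)$, then with $\vdash K_a(\psi\rightarrow\top)$ — yield $\vdash Ky_a^r(\psi,\phi)$, as required, and since $\psi$ was arbitrary this establishes admissibility of the stated rule.

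I do not expect a genuine obstacle here: the whole argument is a four- or five-line derivation, and the only care needed is bookkeeping, namely keeping the schematic variables of $(IKyR)$ distinct from the formula $\phi\in\Lambda$ and from the target premise $\psi$ when performing the substitution. It is worth emphasizing that the rule is \emph{admissible} rather than derivable precisely because its applicability is controlled by the side condition $\phi\in\Lambda$; this side condition enters the derivation only through the single use of $(NKyR)$, while every subsequent step is a pure theorem-to-theorem inference in $\mathbb{SKYR}$.
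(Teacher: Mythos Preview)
Your proof is correct and follows essentially the same approach as the paper: apply $(NKyR)$ to obtain $\vdash Ky_a^r(\top,\phi)$, use $(PT)$ and $(NK)$ to get $\vdash K_a(\psi\rightarrow\top)$, and then conclude $\vdash Ky_a^r(\psi,\phi)$ via $(IKyR)$ and $(MP)$. Your version merely spells out the instantiation of $(IKyR)$ and the two $(MP)$ steps in a little more detail than the paper does.
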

\begin{proof}
Let $\phi\in\Lambda$ and $\psi\in\mathcal{L}_{ELKy^r}$. By \emph{(NKyR)}, we have $\vdash Ky_a^r(\top,\phi)$. As an instance of a propositional tautology, we have $\vdash\psi\rightarrow\top$ and by \emph{(NK)}, $\vdash K_a(\psi\rightarrow\top)$. By \emph{(IKyR)} and \emph{(MP)}, we have $\vdash Ky_a^r(\psi,\phi)$.
\end{proof}
\begin{theorem}[Soundness of $\mathbb{SKYR}$ over $\mathcal{K}y^r\mathcal{S}5$]
The system $\mathbb{SKYR}$ is sound with respect to the class of all $\mathcal{S}5-ELKy^r$-models.
\end{theorem}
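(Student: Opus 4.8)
The plan is to proceed by the standard route for soundness of a Hilbert calculus: verify that every axiom of $\mathbb{SKYR}$ is valid in every model of $\mathcal{K}y^r\mathcal{S}5$, that every rule preserves validity, and then lift this to local semantic consequence by induction on the length of a derivation. The one point requiring care is the interaction between \emph{local} deduction and the necessitation rules \emph{(NK)} and \emph{(NKyR)}: as their statements indicate (they are phrased with $\vdash$, i.e.\ with an empty set of hypotheses), these rules may be applied only to theorems, never to a formula merely derived from $\Gamma$. I will therefore first establish that every $\mathbb{SKYR}$-theorem is valid, and only afterwards treat derivations from an arbitrary $\Gamma$.

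For the axioms: \emph{(PT)} is valid because the clauses for $p,\neg,\land$ in the definition of $\models$ agree at each world with the classical truth tables, so every propositional tautology is true at every world of every model. The modal axioms \emph{(K)}, \emph{(T)}, \emph{(4)}, \emph{(5)} hold by the usual frame-correspondence facts, since each $R_a$ is required to be an $\mathcal{S}5$ relation (reflexivity validates \emph{(T)}, transitivity \emph{(4)}, and symmetry together with the former two validates \emph{(5)}, while \emph{(K)} holds over any relational semantics); these are exactly as in Proposition 11 of \cite{XWS2016}. The remaining axioms concerning $Ky^r$ are already covered: \emph{(DKyR)}, \emph{(4YKR)} and \emph{(UKyR)} are literally items (ii), (iii) and (v) of Proposition \ref{prop:axioms1}; \emph{(IKyR)} is the propositional rearrangement $A\to(B\to C)$ of item (iv) $A\land B\to C$; and \emph{(EKyR)} is likewise the rearrangement of the formula validated in Proposition \ref{prop:ckyr}. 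Since validity in this semantics is preserved under replacing a formula by a propositionally equivalent one (again because $\models$ extends the propositional clauses), these rearrangements are valid too.

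For the rules: \emph{(MP)} preserves truth at every individual world, hence validity; \emph{(NK)} preserves validity, since if $\phi$ is true at every world of $\mathfrak{M}$ then in particular at every $R_a$-successor of any world, so $K_a\phi$ is true everywhere; and \emph{(NKyR)} preserves validity by Lemma \ref{lem:rlnkyr}. An induction on derivation length now yields that $\vdash_{\mathbb{SKYR}}\phi$ implies $\mathfrak{M}\models\phi$ for all $\mathfrak{M}\in\mathcal{K}y^r\mathcal{S}5$.

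Finally, for an arbitrary set $\Gamma$ with $\Gamma\vdash_{\mathbb{SKYR}}\phi$, fix a pointed model $(\mathfrak{M},w)$ with $\mathfrak{M}\in\mathcal{K}y^r\mathcal{S}5$ and $(\mathfrak{M},w)\models\gamma$ for every $\gamma\in\Gamma$. A derivation witnessing $\Gamma\vdash_{\mathbb{SKYR}}\phi$ consists of axioms, members of $\Gamma$, theorems introduced by the necessitation rules (whose sub-derivations are genuine $\emptyset$-derivations, hence valid by the previous paragraph), and applications of \emph{(MP)}. By induction along this derivation, every formula occurring in it is true at $w$: axioms are valid hence true at $w$, hypotheses are true at $w$ by assumption, necessitation-introduced theorems are true at $w$ by validity, and \emph{(MP)} preserves truth at $w$. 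In particular $(\mathfrak{M},w)\models\phi$, which is the claim. The only genuine obstacle is the bookkeeping around the restricted applicability of \emph{(NK)} and \emph{(NKyR)}; once that is made explicit, the argument is routine, as all the semantic content has been discharged in Propositions \ref{prop:axioms1} and \ref{prop:ckyr} and Lemma \ref{lem:rlnkyr}.
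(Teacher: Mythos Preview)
Your proof is correct and follows essentially the same approach as the paper, which simply points to Lemma~\ref{lem:rlnkyr}, Propositions~\ref{prop:axioms1} and~\ref{prop:ckyr}, and the standard $\mathcal{S}5$ facts for the Kripkean part. Your treatment is more explicit about the bookkeeping around \emph{(NK)} and \emph{(NKyR)} and the lift to local consequence from $\Gamma$, which the paper leaves implicit.
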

\begin{proof}
This theorem is easily obtained by considering Lem. \ref{lem:rlnkyr}, Prop. \ref{prop:axioms1} and Prop. \ref{prop:ckyr} together with the fact, that the $\mathbf{ELKy^r}$-models are based on the standard \emph{Kripkean}-$\mathcal{S}5$-models, making both the standard $\mathcal{S}5$ axioms and the rule \emph{(NK)} valid.
\end{proof}
\subsubsection{Completeness}
For the proof of completeness' sake, we follow the common approach of considering a \emph{canonical} model being defined over all maximal consistent sets with the(later more explicitly defined) common properties like \emph{truth} in order to provide the framework for a somehow \emph{standard} proof of a completeness theorem in the context of modal logics. For this, we first consider:
\begin{definition}[Consistency]
A set $\Gamma\subseteq\mathcal{L}_{ELKy^r}$ is called \emph{consistent} (in $\mathbb{SKYR}$), if $\Gamma\not\vdash_\mathbb{SKYR}\bot$. Otherwise, it is called inconsistent, i.e. with $\Gamma\vdash_\mathbb{SKYR}\bot$. Following to this, a set is called maximal consistent (over $\mathcal{L}_{ELKy^r}$) if
\begin{enumerate}
\item it is consistent, i.e. $\Gamma\not\vdash_\mathbb{SKYR}\bot$,
\item it is maximal, i.e. $\forall\Gamma'\subseteq\mathcal{L}_{ELKy^r}:\Gamma'\supset\Gamma\text{ implies }\Gamma'\vdash\bot$.
\end{enumerate}
\end{definition}
\begin{proposition}[Properties of maximal consistent sets]\label{prop:max_con_set_prop}
Let $\Gamma$ be maximal consistent. Then for all $\phi\in\mathcal{L}_{ELKy^r}$
\begin{multicols}{2}
\begin{enumerate}[(i)]
\item $\Gamma\vdash\phi$ iff $\phi\in\Gamma$ (deductive closure)
\item $\phi\in\Gamma$ iff $\neg\phi\not\in\Gamma$
\item $\phi,\psi\in\Gamma$ iff $(\phi\land\psi)\in\Gamma$
\item $\top\in\Gamma$
\end{enumerate}
\end{multicols}
\end{proposition}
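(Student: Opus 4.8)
The plan is to prove the four properties in order, using only the definition of (maximal) consistency, the deduction theorem for the Hilbert calculus $\mathbb{SKYR}$, and the fact that $(PT)$ makes all classical propositional tautologies derivable. These are entirely standard facts about maximal consistent sets, so the argument is routine; the only thing to be careful about is that all reasoning stays within the language $\mathcal{L}_{ELKy^r}$ and uses only $(MP)$, $(PT)$, and basic monotonicity of $\vdash_{\mathbb{SKYR}}$.

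For $(i)$, the right-to-left direction is immediate since $\phi\in\Gamma$ gives $\Gamma\vdash\phi$ by the axiom-as-hypothesis step. For left-to-right, suppose $\Gamma\vdash\phi$ but $\phi\notin\Gamma$; then $\Gamma\cup\{\phi\}\supsetneq\Gamma$, so by maximality $\Gamma\cup\{\phi\}\vdash\bot$, and by the deduction theorem $\Gamma\vdash\phi\rightarrow\bot$, whence $\Gamma\vdash\bot$ by $(MP)$, contradicting consistency. For $(ii)$, note first that $\Gamma$ cannot contain both $\phi$ and $\neg\phi$: from these $\Gamma\vdash\bot$ using the propositional tautology $\phi\rightarrow(\neg\phi\rightarrow\bot)$ and $(MP)$. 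So at most one of $\phi,\neg\phi$ lies in $\Gamma$. It remains to see at least one does: if $\phi\notin\Gamma$, then by maximality $\Gamma\cup\{\phi\}\vdash\bot$, so $\Gamma\vdash\neg\phi$ by the deduction theorem (using $\phi\rightarrow\bot \leftrightarrow \neg\phi$ from $(PT)$), hence $\neg\phi\in\Gamma$ by $(i)$.

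For $(iii)$, both directions follow from $(i)$ together with the propositional tautologies $\phi\rightarrow(\psi\rightarrow(\phi\land\psi))$ (for the forward direction) and $(\phi\land\psi)\rightarrow\phi$, $(\phi\land\psi)\rightarrow\psi$ (for the backward direction), applying $(MP)$. For $(iv)$, since $\top$ abbreviates a propositional tautology we have $\vdash\top$, hence $\Gamma\vdash\top$, hence $\top\in\Gamma$ by $(i)$. The only mild subtlety in the whole proposition is the implicit appeal to the deduction theorem for $\mathbb{SKYR}$; this holds because the only inference rules are $(MP)$ and the necessitation-style rules $(NK)$, $(NKyR)$, which are restricted to premises derivable from $\emptyset$ (resp.\ from $\Lambda$) and therefore do not obstruct the standard inductive proof of the deduction theorem. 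I would state this once and then use it freely.
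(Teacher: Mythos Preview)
Your proof is correct and entirely standard. The paper itself omits the proof, remarking only that it ``is very canonical and thus omitted here,'' so there is nothing substantive to compare against; your argument supplies exactly the routine details one would expect, and your remark that the necessitation-style rules $(NK)$ and $(NKyR)$ are restricted to theorems (so the deduction theorem goes through unobstructed) is a sensible point to make explicit.
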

\begin{proposition}[Lindenbaum]\label{prop:lindenbaum}
Every consistent set can be extended to a maximal consistent set.
\end{proposition}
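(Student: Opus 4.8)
The plan is the standard Lindenbaum construction. First, observe that $\mathcal{L}_{ELKy^r}$ is countable: $\mathcal{P}$ is countably infinite and $\mathcal{A}$ is countable, so formulas are finite strings over a countable alphabet and may be enumerated as $\phi_0,\phi_1,\phi_2,\dots$. Given a consistent $\Gamma\subseteq\mathcal{L}_{ELKy^r}$, I would build an increasing chain $\Gamma_0\subseteq\Gamma_1\subseteq\dots$ by setting $\Gamma_0=\Gamma$ and, for $n\geq 0$,
\[
\Gamma_{n+1}=\begin{cases}\Gamma_n\cup\{\phi_n\}&\text{if this set is consistent in }\mathbb{SKYR},\\ \Gamma_n&\text{otherwise,}\end{cases}
\]
and finally $\Gamma^{\ast}=\bigcup_{n\geq 0}\Gamma_n$. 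I then claim that $\Gamma^{\ast}$ is a maximal consistent set containing $\Gamma$.

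Consistency of each $\Gamma_n$ is immediate by induction on $n$: $\Gamma_0=\Gamma$ is consistent by hypothesis, and at each step we only enlarge $\Gamma_n$ when the enlargement remains consistent (otherwise $\Gamma_{n+1}=\Gamma_n$). Consistency of $\Gamma^{\ast}$ then follows from the finitary nature of $\vdash_{\mathbb{SKYR}}$: any derivation witnessing $\Gamma^{\ast}\vdash\bot$ uses only finitely many premises from $\Gamma^{\ast}$, and since the $\Gamma_n$ are increasing these all lie in a single $\Gamma_N$, so $\Gamma_N\vdash\bot$ by monotonicity of $\vdash$, contradicting the previous step. For maximality in the sense of the definition, suppose $\Gamma^{\ast}\subsetneq\Gamma'$ and pick $\psi\in\Gamma'\setminus\Gamma^{\ast}$; writing $\psi=\phi_n$, the fact that $\phi_n\notin\Gamma^{\ast}$ (in particular $\phi_n\notin\Gamma_n$ and $\phi_n\notin\Gamma_{n+1}$) forces $\Gamma_n\cup\{\phi_n\}$ to have been inconsistent at stage $n$, hence $\Gamma'\supseteq\Gamma^{\ast}\cup\{\phi_n\}\supseteq\Gamma_n\cup\{\phi_n\}$ is inconsistent by monotonicity. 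Thus every proper superset of $\Gamma^{\ast}$ is inconsistent, i.e. $\Gamma^{\ast}$ is maximal consistent, and $\Gamma=\Gamma_0\subseteq\Gamma^{\ast}$.

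Since this is an entirely routine argument, there is no real obstacle; the two points deserving attention are the countability of $\mathcal{L}_{ELKy^r}$ (so the enumeration exists) and the finitary character of $\vdash_{\mathbb{SKYR}}$ (so that consistency passes to the union). If one additionally wants negation-completeness baked directly into the construction --- convenient for the later canonical-model argument --- one can instead add $\phi_n$ when $\Gamma_n\cup\{\phi_n\}$ is consistent and $\neg\phi_n$ otherwise; that $\Gamma_n\cup\{\neg\phi_n\}$ is then consistent relies on the deduction theorem for $\mathbb{SKYR}$, which is available because the necessitation rules \emph{(NK)} and \emph{(NKyR)} apply only to theorems. With the construction above, negation-completeness instead follows a posteriori from maximality together with the deductive-closure property recorded in Prop.~\ref{prop:max_con_set_prop}.
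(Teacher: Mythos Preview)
Your proof is correct and is exactly the standard Lindenbaum construction; the paper itself omits the proof as ``very canonical,'' so your argument is precisely the kind of routine verification the authors had in mind.
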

The proof of both propositions is very canonical and thus omitted here. Following from these considerations, we now define a canonical model with worlds corresponding to maximal consistent sets in the system $\mathbb{SKYR}$.
\begin{definition}[Canonical model for $\mathbb{SKYR}$]\label{def:canonmod}
The canonical model for $\mathbb{SKYR}$ is defined as the structure 
\begin{equation*}
\mathfrak{M}^c =\langle W^c, E^c,\{ R_a^c\;|\;a\in\mathcal{A}\},\mathcal{E}^c, V^c\rangle
\end{equation*}
with
\begin{itemize}
\item $\hat{E}^c:t::= e\mid\phi\mid(t\cdot t)$ with $\phi\in\mathcal{L}_{ELKy^r}$. We then set $E^c=\{e_\top\}\cup\{t_\phi\mid t\in\hat{E}^c\setminus\{e\}, \phi\in\mathcal{L}_{ELKy^r}\}$. $\cdot: E^c\times E^c\to E^c$ is given with $(t_\phi\cdot s_\psi)=(t\cdot s)_{\phi\land\psi}$.\footnote{Note, that we do not explicitly differentiate in notation between $\cdot$ as an operation on $E^c$ and as a syntactical connective in $\hat{E}^c$.}
\item $W^c=\{\langle\Gamma,F,\vec{f}\rangle\}$, such that $\Gamma\subseteq\mathcal{L}_{ELKy^r}$ is maximal consistent, $F\subseteq E^c\times\mathcal{L}_{ELKy^r}$, $\vec{f}=(f^\phi_a)_{a\in\mathcal{A}, \phi\in\mathcal{L}_{ELKy^r}}$, $f_a^\phi:\{\psi\mid Ky_a^r(\phi,\psi)\in\Gamma\}\to E^c$ which fulfill the following conditions:\begin{enumerate}
\item $(s_\alpha,\phi\rightarrow\psi), (r_\beta,\phi)\in F$ impl. $((s_\alpha\cdot r_\beta),\psi)=((s\cdot r)_{\alpha\land\beta},\psi)\in F$ 
\item If $\phi\in\Lambda$, then $(e_\top,\phi)\in F$ .
\item For any $a\in\mathcal{A}$, if $Ky_a^r(\phi,\psi)\land\phi\in\Gamma$, then $(f_a^\phi(\psi),\psi)\in F$.
\item For any $a\in\mathcal{A}$, any $\phi\in\mathcal{L}_{ELKy^r}$ and any $\psi\in\mathsf{dom}(f_a^\phi)$, $f_a^\phi(\psi)=t_\phi$ for some $t\in \hat{E}^c$.\footnote{Note, that here $t=e$ is obviously only possible if $\phi=\top$, as $t_\phi=f_a^\phi(\psi)\in E^c$}
\end{enumerate}
\item $R_a^c=\{(\langle\Gamma,F,\vec{f}\rangle,\langle\Delta,G,\vec{g}\rangle)\in W^c\times W^c\mid\Gamma^\#_a=\{\phi\mid K_a\phi\in\Gamma\}\subseteq\Delta, \forall\phi\in\mathcal{L}_{ELKy^r}:f_a^\phi=g_a^\phi\}$, for any $a\in\mathcal{A}$
\item $\mathcal{E}^c(t_\phi,\psi)=\{\langle\Gamma,F,\vec{f}\rangle\in W^c\mid(t_\phi,\psi)\in F\}$, for $t_\phi\in E^c$
\item $V^c(p)=\{\langle\Gamma,F,\vec{f}\rangle\in W^c\mid p\in\Gamma\}$, for $p\in\mathcal{P}$
\end{itemize}
\end{definition}
Note, that for the canonical model, $e_\top$ represents the required constant $e$ from the original definition, indexed by $\top$ here just to be in line with the notation. This definition is non-degenerate, as $e_\phi$ for $\phi\neq\top$ is non-existing in $E^c$. The design choices have been made in coorperation to the basic ideas of \cite{XWS2016} about the there presented canonical model for $\mathbb{SKY}$.

The main difference here is that we conditionalized the explanations, forcing them to keep track under which conditional formulas they were enforced to be explanations for some formula. For further procedure, we first need to show that the model itself is well defined according to the specifications shown in the preliminaries.\\

As it was observed in \cite{XWS2016}, a set of worlds with a 1-to-1 correspondence to maximal consistent sets is not sufficient to provide the classical behavior of a canonical model in the context of the knowing-why notions, as there is not a canonical way to attach explanation to formulas of a given maximal consistent set, i.e. setting $\mathcal{E}^c$ at the respectively associated world, and thus it may be desirable to have the same maximal consistent set accompanied by various versions of possible explanation scenarios. This stands in contrast to the completeness proof of classical justification logic, where a simple 1-to-1 correspondence between maximal consistent sets and worlds in the canonical model is possible. Mainly, in the newly shedded light from the logic of knowing why, this result from justification logic directly encoding the used explanations into the syntax, thus in a way fixing the desired state for pairs of explanations and formulas through the maximal consistent set of formulas directly.\\

Before proceeding to the proof of the well-definedness of $\mathfrak{M}^c$, we first adapt a helpful proposition  from \cite{XWS2016} concerning the accessibility relations.
\begin{proposition}\label{prop:accesprop}
Let $\Gamma,\Delta\subseteq\mathcal{L}_{ELKy^r}$ be max. consistent. If $\Gamma^\#_a\subseteq\Delta$, then
\begin{multicols}{2}
\begin{enumerate}[(i)]
\item $K_a\phi\in\Gamma$ iff $K_a\phi\in\Delta$
\item $Ky_a^r(\phi,\psi)\in\Gamma$ iff $Ky_a^r(\phi,\psi)\in\Delta$
\end{enumerate}
\end{multicols}
\end{proposition}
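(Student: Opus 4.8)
The plan is to prove both biconditionals by the standard argument for $\mathcal{S}5$-type canonical models, exploiting that both relevant operators here --- $K_a$ and $Ky_a^r(\phi,\psi)$ for fixed $\phi,\psi$ --- enjoy positive \emph{and} negative introspection under $K_a$. For $K_a$ this is axioms \emph{(4)} and \emph{(5)}; for $Ky_a^r$ it is axiom \emph{(4YKR)} together with the formula $\neg Ky_a^r(\phi,\psi)\rightarrow K_a\neg Ky_a^r(\phi,\psi)$, which was established as provable in item \emph{(1)} of the preceding proposition. Throughout I will freely use deductive closure, the negation property, and closure under \emph{(MP)} for maximal consistent sets, all from Prop. \ref{prop:max_con_set_prop}.

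For \emph{(i)}, left to right: assume $K_a\phi\in\Gamma$. Since $K_a\phi\rightarrow K_aK_a\phi$ is an instance of \emph{(4)}, deductive closure gives $K_aK_a\phi\in\Gamma$, hence $K_a\phi\in\Gamma^\#_a\subseteq\Delta$. For right to left I argue contrapositively: if $K_a\phi\notin\Gamma$, then $\neg K_a\phi\in\Gamma$ by maximality, so by the instance $\neg K_a\phi\rightarrow K_a\neg K_a\phi$ of \emph{(5)} we obtain $K_a\neg K_a\phi\in\Gamma$, whence $\neg K_a\phi\in\Gamma^\#_a\subseteq\Delta$; but then $K_a\phi\notin\Delta$ by the negation property, since $\Delta$ is consistent.

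For \emph{(ii)} the argument is identical in shape, replacing $K_a\phi$ by $Ky_a^r(\phi,\psi)$ and using \emph{(4YKR)} in place of \emph{(4)} and the provable negative introspection $\neg Ky_a^r(\phi,\psi)\rightarrow K_a\neg Ky_a^r(\phi,\psi)$ in place of \emph{(5)}: from $Ky_a^r(\phi,\psi)\in\Gamma$ one gets $K_aKy_a^r(\phi,\psi)\in\Gamma$ and hence $Ky_a^r(\phi,\psi)\in\Delta$; conversely, if $Ky_a^r(\phi,\psi)\notin\Gamma$ then $\neg Ky_a^r(\phi,\psi)\in\Gamma$, so $K_a\neg Ky_a^r(\phi,\psi)\in\Gamma$, so $\neg Ky_a^r(\phi,\psi)\in\Delta$, forcing $Ky_a^r(\phi,\psi)\notin\Delta$.

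There is no real obstacle here; the one point requiring care is simply that negative introspection for $Ky_a^r$ is genuinely needed and is not a primitive axiom, but as noted it is cited directly from the first item of the previous proposition. This lemma is the exact analogue for $\mathbb{SKYR}$ of the corresponding fact for $\mathbb{SKY}$ in \cite{XWS2016}.
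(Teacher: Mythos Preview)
Your proof is correct and follows essentially the same argument as the paper's: axioms \emph{(4)} and \emph{(5)} for part \emph{(i)}, and \emph{(4YKR)} together with the derived negative introspection (which the paper labels \emph{(5YKR)}) for part \emph{(ii)}, combined with deductive closure and the negation property of maximal consistent sets. Your explicit remark that negative introspection for $Ky_a^r$ is not primitive but is supplied by the preceding proposition matches the paper's appeal to \emph{(5YKR)}.
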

\begin{proof}
Using properties for maximal consistent sets (Prop. \ref{prop:max_con_set_prop}), we infer:
\begin{enumerate}[(i)]
\item Let $K_a\phi\in\Gamma$. By axiom \emph{(4)}(and deductive closure of $\Gamma$\footnote{This phrase will be omitted in the following.}), we have $K_aK_a\phi\in\Gamma$, i.e. $K_a\phi\in\Delta$ per definition of $\Gamma^\#_a\subseteq\Delta$. Let $K_a\phi\not\in\Gamma$, i.e. $\neg K_a\phi\in\Gamma$ and by axiom \emph{(5)}, we have $K_a\neg K_a\phi\in\Gamma$, i.e. $\neg K_a\phi\in\Delta$, i.e. $K_a\phi\not\in\Delta$.
\item Let $Ky_a^r(\phi,\psi)\in\Gamma$, i.e. $K_aKy_a^r(\phi,\psi)\in\Gamma$ by \emph{(4YKR)}, thus $Ky_a^r(\phi,\psi)\in\Delta$. Let $Ky_a^r(\phi,\psi)\not\in\Gamma$, i.e. $\neg Ky_a^r(\phi,\psi)\in\Gamma$, i.e. by \emph{(5YKR)}, we have $K_a\neg Ky_a^r(\phi,\psi)\in\Gamma$, i.e. $\neg Ky_a^r(\phi,\psi)\in\Delta$, i.e. $Ky_a^r(\phi,\psi)\not\in\Delta$.
\end{enumerate}
\end{proof}
For the following, it is also interesting to note that by the above for two $\langle\Gamma,F,\vec{f}\rangle,\langle\Delta,G,\vec{g}\rangle\in W^c$, if $\Gamma^\#_a\subseteq\Delta$, then $\mathsf{dom}(f_a^\phi)=\mathsf{dom}(g_a^\phi)$ for any $\phi\in\mathcal{L}_{ELKy^r}$.
We now propose:
\begin{proposition}
The canonical model $\mathfrak{M}^c$ for $\mathbb{SKYR}$ is well defined, given the conditions for $\mathcal{S}5-ELKy^r$($\mathcal{S}5-ELKy$)-models.
\end{proposition}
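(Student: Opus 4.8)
The plan is to verify each structural requirement from the preliminaries against the explicit construction in Definition \ref{def:canonmod}, working through the components in the order they appear: first the explanation domain $E^c$, then the conditions on the explanation function $\mathcal{E}^c$, and finally the $\mathcal{S}5$-properties of each $R_a^c$. For the explanation domain, I would check that $E^c$ indeed contains a designated element (namely $e_\top$, playing the role of $e$) and is closed under $\cdot$; the latter is immediate from the stipulation $(t_\phi\cdot s_\psi)=(t\cdot s)_{\phi\land\psi}$, which visibly lands in $E^c$ again, with the footnote about not distinguishing the syntactic connective from the operation making this coherent. I would also remark why the definition is non-degenerate, i.e. that $e_\phi$ for $\phi\neq\top$ simply does not occur, so $e_\top$ is unambiguous.

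Next I would turn to the two conditions on $\mathcal{E}^c$. The tautology-ground condition requires $\mathcal{E}^c(e_\top,\phi)=W^c$ whenever $\phi\in\Lambda$; this follows because condition (2) in the definition of $W^c$ forces $(e_\top,\phi)\in F$ for every world $\langle\Gamma,F,\vec f\rangle$ when $\phi\in\Lambda$, so by the defining equation for $\mathcal{E}^c$ every world lies in the set. The modus-ponens/distribution condition $\mathcal{E}^c(s_\alpha,\phi\rightarrow\psi)\cap\mathcal{E}^c(r_\beta,\phi)\subseteq\mathcal{E}^c((s_\alpha\cdot r_\beta),\psi)$ follows directly from condition (1) on $F$: if a world $\langle\Gamma,F,\vec f\rangle$ has $(s_\alpha,\phi\rightarrow\psi)\in F$ and $(r_\beta,\phi)\in F$, then $((s\cdot r)_{\alpha\land\beta},\psi)\in F$, which is exactly membership in $\mathcal{E}^c((s_\alpha\cdot r_\beta),\psi)$ after unwinding the definition of $\cdot$ on $E^c$. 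The only subtlety here is bookkeeping on the conditional indices, which the definition of $\cdot$ was designed precisely to handle.

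The main work, and the place I expect genuine care to be needed, is showing each $R_a^c$ is reflexive, transitive and symmetric. Transitivity and the "$K_a\phi\in\Gamma$" half of reflexivity are standard: for reflexivity one uses $(T)$ to get $\Gamma^\#_a\subseteq\Gamma$, for transitivity one chains the inclusions of the $K_a$-fragments, and symmetry uses $(5)$ in the usual way (if $K_a\phi\notin\Gamma$ then $\neg K_a\phi\in\Gamma$, hence $K_a\neg K_a\phi\in\Gamma$, so $\neg K_a\phi$ lies in the accessed world, etc.). The extra ingredient, absent from the classical case, is the side condition $f_a^\phi=g_a^\phi$ for all $\phi$ built into $R_a^c$: I must check this relation clause is itself reflexive (trivial), symmetric (equality is symmetric) and transitive (equality is transitive), and that it is compatible with the $K_a$-fragment inclusion — in particular that Proposition \ref{prop:accesprop} guarantees $\mathsf{dom}(f_a^\phi)=\mathsf{dom}(g_a^\phi)$ whenever $\Gamma^\#_a\subseteq\Delta$, as already noted in the remark following that proposition, so the constraint $f_a^\phi=g_a^\phi$ is not vacuous for the wrong reasons and does not conflict with symmetry. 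I would close by noting that $W^c$ is nonempty (apply Lindenbaum, Proposition \ref{prop:lindenbaum}, to any consistent set, and observe that at least one admissible triple $\langle\Gamma,F,\vec f\rangle$ exists — e.g. take $F$ minimal closed under conditions (1) and (2) and each $f_a^\phi$ chosen per conditions (3) and (4)), so that $\mathfrak{M}^c$ is a genuine $\mathcal{S}5$-$ELKy^r$-model.
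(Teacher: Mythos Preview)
Your proposal is correct and follows essentially the same approach as the paper: verify the closure properties of $E^c$, derive the two $\mathcal{E}^c$-conditions from clauses (1) and (2) on $F$, and establish the $\mathcal{S}5$-properties of $R_a^c$ via $(T)$, $(4)$, $(5)$ together with the trivial reflexivity/symmetry/transitivity of the equality constraint on the $f_a^\phi$. Two small remarks: your transitivity sketch needs axiom $(4)$ explicitly (mere chaining of $\Gamma^\#_a\subseteq\Delta$ and $\Delta^\#_a\subseteq\Theta$ does not yield $\Gamma^\#_a\subseteq\Theta$ without it), and the paper separates the non-emptiness of $W^c$ into its own proposition (Proposition~\ref{prop:worldexistence}) rather than folding it into well-definedness.
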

\begin{proof}
First, we check that $E^c$ is well-defined. For this, note that $E^c$ holds a designated explanation $e_\top$ whose circumstances were explained before. Also, $E^c$ is closed under the combination operation $\cdot$. To see this, let $x,y\in E^c$, i.e. $x=t_\phi,y=s_\psi$ for
\begin{itemize}
\item $\phi,\psi\in\mathcal{L}_{ELKy^r}$ and $t,s\in \hat{E}^c\setminus\{e\}$, or
\item $t=e$ and thus $\phi=\top$, or
\item $s=e$ and thus $\psi=\top$.
\end{itemize}
In any case, we have $(x\cdot y)=(t_\phi\cdot s_\psi)=(t\cdot s)_{\phi\land\psi}$ and in any case we have $t,s\in\hat{E}^c$(as obviously also $e\in\hat{E}^c$). Thus, $(t\cdot s)\in\hat{E}^c$ and as never $e=(t\cdot s)$, we have $(t\cdot s)\in\hat{E}^c\setminus\{e\}$. Thus $(t\cdot s)_{\phi\land\psi}=(t_\phi\cdot s_\psi)\in E^c$ per definition.\\

Now we check the conditions on $\mathcal{E}^c$ and $R_a^c$ for any $a\in\mathcal{A}$:
\begin{description}
\item [$\mathcal{E}^c(e_\top,\phi)= W^c$, $\phi\in\Lambda$] Let $\phi\in\Lambda$, thus by def. of $\mathfrak{M}^c$, we have $(e_\top,\phi)\in F$ for every $\langle\Gamma,F,\vec{f}\rangle\in W^c$, i.e. $\langle\Gamma,F,\vec{f}\rangle\in\mathcal{E}^c(e_\top,\phi)$ f.a. $\langle\Gamma,F,\vec{f}\rangle\in W^c$, i.e. $\mathcal{E}^c(e_\top,\phi)=W^c$.
\item [$\mathcal{E}^c(s_\alpha,\phi\rightarrow\psi)\cap\mathcal{E}^c(r_\beta,\phi)\subseteq\mathcal{E}^c((s\cdot r)_{\alpha\land\beta},\psi)$] Let $\langle\Gamma,F,\vec{f}\rangle\in\mathcal{E}^c(s_\alpha,\phi\rightarrow\psi)\cap\mathcal{E}^c(r_\beta,\phi)$, i.e. $(s_\alpha,\phi\rightarrow\psi),(r_\beta,\phi)\in F$. Thus, per def., $((s\cdot r)_{\alpha\land\beta},\psi)\in F$, i.e. $\langle\Gamma,F,\vec{f}\rangle\in\mathcal{E}^c((s\cdot r)_{\alpha\land\beta},\psi)$.
\item [$R_a^c$ is reflexive] Obviously, $f_a^\phi=f_a^\phi$ f.a. $\phi\in\mathcal{L}_{ELKy^r}$. Also, by \emph{(T)}, for every $K_a\phi\in\Gamma$, we have $\phi\in\Gamma$, i.e. $\Gamma^\#_a\subseteq\Gamma$.
\item [$R_a^c$ is symmetric] Supp. $(\langle\Gamma,F,\vec{f}\rangle,\langle\Delta,G,\vec{g}\rangle)\in R_a^c$, i.e. $\Gamma^\#_a\subseteq\Delta$ and $f_a^\phi=g_a^\phi$ f.a. $\phi\in\mathcal{L}_{ELKy^r}$. Automatically, $g_a^\phi=f_a^\phi$ f.a. $\phi\in\mathcal{L}_{ELKy^r}$. Also, let $K_a\phi\in\Delta$, then by Prop. \ref{prop:accesprop}, we have $K_a\phi\in\Gamma$, i.e. by \emph{(T)}, $\phi\in\Gamma$, i.e. $\Delta^\#_a\subseteq\Gamma$. Thus $(\langle\Delta,G,\vec{g}\rangle,\langle\Gamma,F,\vec{f}\rangle)\in R_a^c$.
\item [$R_a^c$ is transitive] Let $(\langle\Gamma,F,\vec{f}\rangle,\langle\Delta,G,\vec{g}\rangle)\in R_a^c$ and $(\langle\Delta,G,\vec{g}\rangle,\langle\Theta,H,\vec{h}\rangle)\in R_a^c$. Thus $f_a^\phi=g_a^\phi$ f.a. $\phi\in\mathcal{L}_{ELKy^r}$ and $g_a^\phi=h_a^\phi$ f.a. $\phi\in\mathcal{L}_{ELKy^r}$, i.e. $f_a^\phi=h_a^\phi$ f.a. $\phi\in\mathcal{L}_{ELKy^r}$. Also, let $K_a\phi\in\Gamma$, i.e. by \emph{(4)}, we have $K_aK_a\phi\in\Gamma$, i.e. $K_a\phi\in\Delta$ by $\Gamma^\#_a\subseteq\Delta$ and thus $\phi\in\Theta$ by $\Delta^\#_a\subseteq\Theta$. Thus $\Gamma^\#_a\subseteq\Theta$. Therefore $(\langle\Gamma,F,\vec{f}\rangle,\langle\Theta,H,\vec{h}\rangle)\in R_a^c$
\end{description}
\end{proof}
In order to fully proof the functioning of the canonical model, we have left to show that $W^c$ is not empty. Following \cite{XWS2016}, we provide a construction of some corresponding standard $F$ and $\vec{f}$ for a given $\Gamma$, by that showing that there exists at least one world $\langle\Gamma, F,\vec{f}\rangle$ for every maximal consistent set $\Gamma$ in the language of $\mathcal{L}_{ELKy^r}$ and the calculus $\mathbb{SKYR}$.
\begin{definition}\label{def:canonworldconst}
Given any maximal consistent set $\Gamma$, construct corresponding standard world parts $F^\Gamma$ and $\vec{f}^\Gamma$ as follows:
\begin{enumerate}
\item $F_0^\Gamma=\{(e_\top,\phi)\mid\phi\in\Lambda\}\cup\{(\psi_\phi,\psi)\mid\exists b\in\mathcal{A}: Ky_b^r(\phi,\psi)\land\phi\in\Gamma\}$.
\item $F_{n+1}^\Gamma=F_n^\Gamma\cup\{((s\cdot r)_{\alpha\land\beta},\psi)\mid (s_\alpha,\phi\rightarrow\psi),(r_\beta,\phi)\in F_n^\Gamma\}$, f.a. $n\geq 0$
\item $F^\Gamma=\bigcup_{n\in\mathbb{N}}F_n^\Gamma$
\item $f_b^\phi(\psi)=\psi_\phi$ f.a. $b\in\mathcal{A}$ and all $\phi\in\mathcal{L}_{ELKy^r}$, $\psi\in\mathsf{dom}(f_b^\phi)$
\end{enumerate}
\end{definition}
From this construction, we can now consider the following proposition providing the non-emptiness of $W^c$.
\begin{proposition}\label{prop:worldexistence}
For any maximal consistent set $\Gamma$, $\langle\Gamma ,F^\Gamma,\vec{f^\Gamma}\rangle\in W^c$.
\end{proposition}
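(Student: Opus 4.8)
The plan is to unwind Definitions \ref{def:canonmod} and \ref{def:canonworldconst} and to check, one by one, that the explicitly constructed triple $\langle\Gamma,F^\Gamma,\vec{f}^\Gamma\rangle$ meets every requirement for membership in $W^c$: namely that $F^\Gamma\subseteq E^c\times\mathcal{L}_{ELKy^r}$, that for each $a\in\mathcal{A}$ and $\phi\in\mathcal{L}_{ELKy^r}$ the map $f_a^\phi$ is a genuine function $\{\psi\mid Ky_a^r(\phi,\psi)\in\Gamma\}\to E^c$, and that the four closure conditions (1)--(4) of Definition \ref{def:canonmod} hold. Maximal consistency of $\Gamma$ is part of the hypothesis, so nothing is needed there.

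First I would settle the typing claims. By induction on $n$ one shows $F_n^\Gamma\subseteq E^c\times\mathcal{L}_{ELKy^r}$: in the base case $e_\top\in E^c$, and for any $\psi\in\mathcal{L}_{ELKy^r}$ one has $\psi\in\hat{E}^c\setminus\{e\}$, hence $\psi_\phi\in E^c$ for every $\phi$; the induction step invokes the closure of $E^c$ under $\cdot$ from the preceding proposition, using $(s\cdot r)_{\alpha\land\beta}=(s_\alpha\cdot r_\beta)$. Passing to the union gives $F^\Gamma\subseteq E^c\times\mathcal{L}_{ELKy^r}$. For $\vec{f}^\Gamma$, the defining clause $f_a^\phi(\psi)=\psi_\phi$ is single-valued on $\{\psi\mid Ky_a^r(\phi,\psi)\in\Gamma\}$ and, by the same observation, lands in $E^c$, so each $f_a^\phi$ is a well-defined function of the stipulated type.

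Then I would verify conditions (1)--(4). Condition (2) is immediate, since $\phi\in\Lambda$ puts $(e_\top,\phi)\in F_0^\Gamma\subseteq F^\Gamma$. Condition (1) follows from the recursive clause for $F_{n+1}^\Gamma$ together with monotonicity of the $F_n^\Gamma$: if $(s_\alpha,\phi\rightarrow\psi),(r_\beta,\phi)\in F^\Gamma$, both already lie in some common $F_N^\Gamma$, whence $((s\cdot r)_{\alpha\land\beta},\psi)\in F_{N+1}^\Gamma\subseteq F^\Gamma$. Condition (4) holds because $f_a^\phi(\psi)=\psi_\phi=t_\phi$ with $t=\psi\in\hat{E}^c$. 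Condition (3) is the one worth spelling out: assuming $Ky_a^r(\phi,\psi)\land\phi\in\Gamma$, deductive closure of $\Gamma$ gives $Ky_a^r(\phi,\psi)\in\Gamma$, so $\psi\in\mathsf{dom}(f_a^\phi)$ and $f_a^\phi(\psi)=\psi_\phi$; and the agent $a$ itself witnesses the existential quantifier $\exists b\in\mathcal{A}$ in the definition of $F_0^\Gamma$, so $(\psi_\phi,\psi)\in F_0^\Gamma\subseteq F^\Gamma$, i.e.\ $(f_a^\phi(\psi),\psi)\in F^\Gamma$ as required.

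The proposition is essentially a verification, so there is no deep obstacle; the only points deserving attention are the interplay between the agent-indexed condition (3) and the $\exists b$-quantified generator set $F_0^\Gamma$ — one must remember that the needed witness may always be taken to be the very agent under consideration — and the routine induction confirming that the recursive enlargement defining $F^\Gamma$ never escapes $E^c\times\mathcal{L}_{ELKy^r}$.
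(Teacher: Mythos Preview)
Your proof is correct and follows essentially the same approach as the paper's own proof: verify conditions (1)--(4) of Definition~\ref{def:canonmod} directly from the construction in Definition~\ref{def:canonworldconst}, together with the well-definedness of each $f_a^\phi$. Your version is slightly more explicit about the typing inductions and the role of the existential witness $b=a$ in condition~(3), but the argument is the same.
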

\begin{proof}
It is already supposed that $\Gamma$ is maximal consistent. We show the properties \emph{(1)} - \emph{(4)} of $W^c$:
\begin{enumerate}
\item Suppose $(s_\alpha,\phi\rightarrow\psi),(r_\beta,\phi)\in F^\Gamma$, i.e. $\exists k\in\mathbb{N}:(s_\alpha,\phi\rightarrow\psi),(r_\beta,\phi)\in F^\Gamma_k$. Then by const., we have $((s\cdot r)_{\alpha\land\beta},\psi)\in F_{k+1}^\Gamma\subseteq F^\Gamma$.
\item Let $\phi\in\Lambda$, then $(e_\top,\phi)\in F_0^\Gamma\subseteq F^\Gamma$.
\item Let $a\in\mathcal{A}$ and let $Ky_a^r(\phi,\psi)\land\phi\in\Gamma$. Then $f_a^\phi(\psi)=\psi_\phi$ and $(\psi_\phi,\psi)\in F_0^\Gamma$, i.e. $(f_a^\phi(\psi),\psi)\in F^\Gamma$.
\item  Obviously, $f_a^\phi(\psi)=\psi_\phi=t_\phi$ f.s. $t\in\hat{E}^c$.
\end{enumerate}
Note that $f_a^\phi$ is well defined for every $\phi\in\mathcal{L}_{ELKy^r}$ and every $a\in\mathcal{A}$ (that is $f_a^\phi(\psi)\in E^c$ for every $\psi\in\mathsf{dom}(f_a^\phi)$) as $\psi\in\hat{E}^c\setminus\{e\}$ and thus $\psi_\phi\in E^c$, for any $\phi\in\mathcal{L}_{ELKy^r}$.
\end{proof}
In the following, we will now reestablish the existence lemmas for both $K$ and $Ky^r$ following the ideas of \cite{XWS2016} in order provide the last necessary steps before considering the truth lemma. The key of both existence lemmas is to provide constructions of worlds related by an accessibility relation which refute either the formula itself or any possible explanation in some way, provided that the corresponding $K$ or $Ky^r$ formula is not member of the to-speak set.
\begin{lemma}\label{lem:neighbourconst}
For any $\langle\Gamma,F,\vec{f}\rangle\in W^c$ and any max. cons. $\Delta$ s.t. $\Gamma^\#_a\subseteq\Delta$, there exists a $\langle\Delta,G,\vec{g}\rangle\in W^c$ with $(\langle\Gamma,F,\vec{f}\rangle,\langle\Delta,G,\vec{g}\rangle)\in R_a^c$.
\end{lemma}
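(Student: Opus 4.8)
The plan is to produce the two missing coordinates $G$ and $\vec{g}$ of the target world by hand, essentially replaying the standard construction of Definition~\ref{def:canonworldconst} applied to $\Delta$, but \emph{overriding} the agent-$a$ part of $\vec{g}$ so that it literally equals the agent-$a$ part of $\vec{f}$. Together with the hypothesis $\Gamma^\#_a\subseteq\Delta$, this is precisely what membership in $R_a^c$ requires, so no relation between $F$ and $G$ needs to be arranged.

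First I would fix $\vec{g}$. For the agent $a$ and every $\phi\in\mathcal{L}_{ELKy^r}$, set $g_a^\phi:=f_a^\phi$. This is well posed because, by Proposition~\ref{prop:accesprop}(ii) and the remark following it, $\Gamma^\#_a\subseteq\Delta$ forces $\mathsf{dom}(f_a^\phi)=\{\psi\mid Ky_a^r(\phi,\psi)\in\Gamma\}=\{\psi\mid Ky_a^r(\phi,\psi)\in\Delta\}$, which is exactly the domain prescribed for $g_a^\phi$; moreover the values of $f_a^\phi$ already lie in $E^c$ and have the form $t_\phi$ for some $t\in\hat{E}^c$, since $\langle\Gamma,F,\vec{f}\rangle\in W^c$ satisfies condition~(4). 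For every agent $b\neq a$ and every $\phi$, I would set $g_b^\phi(\psi):=\psi_\phi$ on the domain $\{\psi\mid Ky_b^r(\phi,\psi)\in\Delta\}$, exactly as in Definition~\ref{def:canonworldconst}; these values lie in $E^c$ because $\psi\in\hat{E}^c\setminus\{e\}$.

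Next I would build $G$ by the same two-step closure used for $F^\Gamma$: let
\[
G_0=\{(e_\top,\phi)\mid\phi\in\Lambda\}\cup\{(g_b^\phi(\psi),\psi)\mid\exists b\in\mathcal{A}:\ Ky_b^r(\phi,\psi)\wedge\phi\in\Delta\},
\]
then $G_{n+1}=G_n\cup\{((s\cdot r)_{\alpha\wedge\beta},\psi)\mid (s_\alpha,\phi\rightarrow\psi),(r_\beta,\phi)\in G_n\}$ for $n\geq 0$, and $G=\bigcup_{n\in\mathbb{N}}G_n$. (If $Ky_b^r(\phi,\psi)\wedge\phi\in\Delta$ then $Ky_b^r(\phi,\psi)\in\Delta$, so $\psi\in\mathsf{dom}(g_b^\phi)$ and the expression $g_b^\phi(\psi)$ makes sense.) Then I would verify the four defining conditions on $W^c$ for $\langle\Delta,G,\vec{g}\rangle$ just as in Proposition~\ref{prop:worldexistence}: (1) by the finitary closure; (2) since $\{(e_\top,\phi)\mid\phi\in\Lambda\}\subseteq G_0$; (3) since $(g_b^\phi(\psi),\psi)\in G_0$ whenever $Ky_b^r(\phi,\psi)\wedge\phi\in\Delta$; and (4) by the shape of the values of $\vec{g}$ fixed above, invoking condition~(4) for $\langle\Gamma,F,\vec{f}\rangle$ when $b=a$ and using $g_b^\phi(\psi)=\psi_\phi$ directly when $b\neq a$. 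This gives $\langle\Delta,G,\vec{g}\rangle\in W^c$.

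Finally, $(\langle\Gamma,F,\vec{f}\rangle,\langle\Delta,G,\vec{g}\rangle)\in R_a^c$ is immediate from the construction: $\Gamma^\#_a\subseteq\Delta$ is the hypothesis and $f_a^\phi=g_a^\phi$ for every $\phi$ holds by definition of $\vec{g}$. The only genuinely delicate point — and the one I expect to be the crux of the write-up — is the step just used, namely that $\Gamma^\#_a\subseteq\Delta$ makes the $Ky_a^r$-fragments of $\Gamma$ and $\Delta$ coincide, so that transplanting $f_a^\phi$ onto $g_a^\phi$ respects the domain demanded by $W^c$ and does not clash with condition~(3); this is exactly Proposition~\ref{prop:accesprop}(ii) together with its remark. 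Everything else is a verbatim repetition of the construction and verification behind Proposition~\ref{prop:worldexistence}.
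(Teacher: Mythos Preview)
Your proposal is correct and follows essentially the same approach as the paper: override the agent-$a$ coordinates of $\vec{g}$ with $\vec{f}$'s values, use the standard $\psi_\phi$ for all other agents, and close up $G$ under the application rule. The only cosmetic difference is that the paper seeds $G_0$ with all of $F$ (and then gets the $\Lambda$-pairs from $F$'s condition~(2)) rather than with $\{(e_\top,\phi)\mid\phi\in\Lambda\}$ directly; either choice yields a valid world, and neither the proof of this lemma nor its later uses depend on which seed is taken.
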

\begin{proof}
Let $a\in\mathcal{A}$ be fixed and $\Gamma_a^\#\subseteq\Delta$. Note first, that thus $\mathsf{dom}(f_a^\phi)=\mathsf{dom}(g_a^\phi)$ for any $\phi\in\mathcal{L}_{ELKy^r}$. We define the world $\langle\Delta, G, \vec{g}\rangle$ as the following:
\begin{enumerate}
\item $g_b^\phi(\psi)=\begin{cases}f_b^\phi(\psi),&\text{if }b=a\\ \psi_\phi,&\text{otherwise}\end{cases}$, for any $b\in\mathcal{A}$, $\phi\in\mathcal{L}_{ELKy^r}$ and $\psi\in\mathsf{dom}(g_b^\phi)$
\item $G_0=F\cup\{(g_b^\phi(\psi),\psi)\mid Ky_b^r(\phi,\psi)\land\phi\in\Delta\}$
\item $G_{n+1}=G_n\cup\{((s\cdot r)_{\alpha\land\beta},\psi)\mid(s_\alpha,\phi\rightarrow\psi),(r_\beta,\phi)\in G_n\}$ f.a. $n\geq 0$
\item $G=\bigcup_{n\in\mathbb{N}}G_n$
\end{enumerate}
It is still left to show that the by that constructed world follows the conditions of the canonical model.\\
\begin{claim}
$\langle\Delta,G,\vec{g}\rangle\in W^c$
\end{claim}
\begin{claimproof}
By supposition, $\Delta$ is maximal consistent. We check the properties \emph{(1)} - \emph{(4)} of $W^c$:
\begin{enumerate}
\item Suppose $(s_\alpha,\phi\rightarrow\psi),(r_\beta,\phi)\in G$, i.e. $\exists k\in\mathbb{N}:(s_\alpha,\phi\rightarrow\psi),(r_\beta,\phi)\in G_k$. Then by const., we have $((s\cdot r)_{\alpha\land\beta},\psi)\in G_{k+1}\subseteq G$.
\item Let $\phi\in\Lambda$, then $(e_\top,\phi)\in F\subseteq G_0\subseteq G$.
\item Let $Ky_b^r(\phi,\psi)\land\phi\in\Delta$ for some $b\in\mathcal{A}$. Then by def. $(g_b^\phi(\psi),\psi)\in G_0\subseteq G$.
\item Let $b\in\mathcal{A}$. Obviously, $g_b^\phi(\psi)=\psi_\phi=t_\phi$ f.s. $t\in\hat{E}^c\setminus\{e\}$ if $b\neq a$. For $b=a$, as $f_a^\phi$ is well-defined, we have $g_a^\phi(\psi)=f_a^\phi(\psi)=t_\phi$ f.s. appropriate $t\in\hat{E}^c$.
\end{enumerate}
\end{claimproof}
We have $\Gamma^\#_a\subseteq\Delta$ and by construction $f_a^\phi=g_a^\phi$ f.a. $\phi\in\mathcal{L}_{ELKy^r}$, thus $(\langle\Gamma,F,\vec{f}\rangle,\langle\Delta,G,\vec{g}\rangle)\in R_a^c$.
\end{proof}
\begin{lemma}[$K_a$ existence lemma]\label{lem:kaexistence}
For any $\langle\Gamma,F,\vec{f}\rangle\in W^c$, if $K_a\phi_1\not\in\Gamma$, there exists a $\langle\Delta,G,\vec{g}\rangle\in W^c$ with $(\langle\Gamma,F,\vec{f}\rangle,\langle\Delta,G,\vec{g}\rangle)\in R_a^c$ and $\neg\phi_1\in\Delta$.
\end{lemma}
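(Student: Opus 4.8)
The plan is to follow the classical Henkin-style existence argument and then hand off all the explanation-bookkeeping to Lemma~\ref{lem:neighbourconst}. First I would consider the set $\Gamma^\#_a\cup\{\neg\phi_1\}$, where $\Gamma^\#_a=\{\phi\mid K_a\phi\in\Gamma\}$, and show it is consistent. Suppose otherwise; then $\Gamma^\#_a\vdash\phi_1$, so by the finitariness of derivability there are $\psi_1,\dots,\psi_n\in\Gamma^\#_a$ with $\vdash(\psi_1\land\dots\land\psi_n)\rightarrow\phi_1$. Applying \emph{(NK)} and then \emph{(K)} together with \emph{(PT)} repeatedly yields $\vdash(K_a\psi_1\land\dots\land K_a\psi_n)\rightarrow K_a\phi_1$. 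Since every $K_a\psi_i\in\Gamma$ by definition of $\Gamma^\#_a$, the deductive closure of $\Gamma$ (Prop.~\ref{prop:max_con_set_prop}) gives $K_a\phi_1\in\Gamma$, contradicting the hypothesis.

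Next I would invoke Lindenbaum's lemma (Prop.~\ref{prop:lindenbaum}) to extend $\Gamma^\#_a\cup\{\neg\phi_1\}$ to a maximal consistent set $\Delta\subseteq\mathcal{L}_{ELKy^r}$. By construction $\Gamma^\#_a\subseteq\Delta$ and $\neg\phi_1\in\Delta$. It then remains only to produce an actual world of $W^c$ with first component $\Delta$ that is $R_a^c$-accessible from $\langle\Gamma,F,\vec{f}\rangle$; this is precisely the content of Lemma~\ref{lem:neighbourconst}, applied to $\langle\Gamma,F,\vec{f}\rangle$ and $\Delta$, which yields a $\langle\Delta,G,\vec{g}\rangle\in W^c$ with $(\langle\Gamma,F,\vec{f}\rangle,\langle\Delta,G,\vec{g}\rangle)\in R_a^c$. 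Since $\neg\phi_1\in\Delta$, this world has all the required properties.

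I do not expect a genuine obstacle here: the only non-routine part of an existence lemma in the knowing-why setting is exhibiting a well-defined world structure $\langle\Delta,G,\vec{g}\rangle$, with its explanation set $G$ and functions $\vec{g}$ satisfying conditions \emph{(1)}--\emph{(4)} of $W^c$, and that has already been isolated and discharged in Lemma~\ref{lem:neighbourconst}. So the substance of this lemma is the standard consistency computation for $\Gamma^\#_a\cup\{\neg\phi_1\}$ followed by a single application of that lemma. The only point to be careful about is that Lemma~\ref{lem:neighbourconst} requires $\Delta$ to be maximal consistent with $\Gamma^\#_a\subseteq\Delta$, both of which are guaranteed by the Lindenbaum extension above.
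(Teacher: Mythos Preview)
Your proposal is correct and matches the paper's proof essentially step for step: the paper also forms $\Delta^-=\{\neg\phi_1\}\cup\Gamma^\#_a$, proves its consistency by the same finitary argument via \emph{(NK)} and \emph{(K)}, extends it by Lindenbaum, and then invokes Lemma~\ref{lem:neighbourconst} to obtain the accessible world. There is nothing to add.
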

\begin{proof}
Suppose $K_a\phi_1\not\in\Gamma$, i.e. $\neg K_a\phi_1\in\Gamma$ for some fixed $a\in\mathcal{A}$. For the desired properties, consider
\[
\Delta^-=\{\neg\phi_1\}\cup\{\phi\mid K_a\phi\in\Gamma\}
\]
To later extend this set to a full maximal consistent one, we establish the following:\\
\begin{claim}
$\Delta^-$ is consistent.
\end{claim}
\begin{claimproof}
Proof by contradiction, i.e. suppose that $\Delta^-$ is inconsistent. Then there exists some finite subset $\Theta=\{\psi_1,\dots,\psi_n\}\subseteq\Gamma^\#_a$ such that
\[
\vdash_\mathbb{SKYR}\bigwedge_{i=1}^n\psi_i\rightarrow\phi_1
\]
By \emph{(NK)} as well as distribution of $K_a$ over $\land$ and via the axiom \emph{(K)}, we have
\[
\vdash_\mathbb{SKYR}\bigwedge_{i=1}^nK_a\psi_i\rightarrow K_a\phi_1
\]
As $\psi_i\in\Gamma^\#_a$ f.a. $i\in\{1,\dots,n\}$, we have $K_a\psi_i\in\Gamma$ f.a. such $i$, i.e. $\bigwedge_{i=1}^nK_a\psi_i\in\Gamma$. We have, by deductive closure, that $K_a\phi_1\in\Gamma$. Contradiction.
\end{claimproof}
\newline
Let $\Delta$ be the extension of $\Delta^-$ to a max. consistent set(Prop. \ref{prop:lindenbaum}). Then, by Lem. \ref{lem:neighbourconst}, there exists a world $\langle\Delta,G,\vec{g}\rangle\in W^c$ s.t. $(\langle\Gamma,F,\vec{f}\rangle,\langle\Delta,G,\vec{g}\rangle)\in R_a^c$ with $\phi_1\not\in\Delta$ by construction.
\end{proof}
\begin{definition}
Let $t\in\hat{E}^c$. $s\in\hat{E}^c$ is a \emph{proper} subterm of $t$, denoted by $t\succ s$, if $t\neq s$ but $s$ occurs somewhere inside $t$.
\end{definition}
Note, that thus $s\not\succ s$. Note also that $\succ$ is obviously transitive, i.e. that $t\succ s$, $s\succ r$ implies $t\succ r$.
\begin{lemma}\label{lem:kyarexistencepre1}
Let $\langle\Gamma,F,\vec{f}\rangle\in W^c$ and $Ky_a^r(\phi_1,\phi_2)\not\in\Gamma$. For any $\phi_3\in\Gamma$ and for any $(s_{\phi_1},\phi_2)\in F$, there exists a world $\langle\Delta,G,\vec{g}\rangle\in W^c$ s.t. $\phi_3\in\Delta$, $(\langle\Gamma,F,\vec{f}\rangle,\langle\Delta,G,\vec{g}\rangle)\in R_a^c$ and $(s_{\phi_1},\phi_2)\not\in G$.
\end{lemma}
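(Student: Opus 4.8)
The plan is to build the required world $\langle\Delta,G,\vec g\rangle$ by hand rather than invoke the standard neighbour construction of Lemma~\ref{lem:neighbourconst}, since that one has $F\subseteq G_0\subseteq G$ and would keep $(s_{\phi_1},\phi_2)$ in $G$. I would set $\Delta^-=\{\phi_3\}\cup\Gamma^\#_a$; this is consistent because $\Delta^-\subseteq\Gamma$, so by Lindenbaum (Prop.~\ref{prop:lindenbaum}) it extends to a maximal consistent $\Delta$, which at once gives $\phi_3\in\Delta$ and $\Gamma^\#_a\subseteq\Delta$, hence $\mathsf{dom}(f_a^\phi)=\mathsf{dom}(g_a^\phi)$ for all $\phi$ by Prop.~\ref{prop:accesprop}. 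For the explanation data I would be forced by $R_a^c$ to take $g_a^\phi:=f_a^\phi$, which is legitimate by that domain coincidence; and for $b\neq a$ I would fix once and for all a formula $\chi_0$ that does not occur as a subterm of $s$ (possible since $\mathcal{L}_{ELKy^r}$ is infinite while $s$ has finitely many subterms) and put $g_b^\phi(\psi):=(\chi_0)_\phi$ for every $\psi\in\mathsf{dom}(g_b^\phi)$. Finally I would let $G$ be the closure under condition~(1) of
\[
G_0=\{(e_\top,\chi)\mid\chi\in\Lambda\}\cup\{(g_b^\phi(\psi),\psi)\mid\exists b\in\mathcal{A}:Ky_b^r(\phi,\psi)\land\phi\in\Delta\},
\]
formed stepwise as in Definition~\ref{def:canonworldconst}, i.e. the least set making conditions~(1)--(3) of $W^c$ hold for $\Delta,\vec g$.

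The routine verifications then mirror Proposition~\ref{prop:worldexistence} and Lemma~\ref{lem:neighbourconst}: $\langle\Delta,G,\vec g\rangle\in W^c$ because conditions~(1)--(3) hold by construction and condition~(4) holds since $g_a=f_a$ inherits well-definedness from $\langle\Gamma,F,\vec f\rangle$ and the $b\neq a$ values have the shape $(\chi_0)_\phi$ and lie in $E^c$; $(\langle\Gamma,F,\vec f\rangle,\langle\Delta,G,\vec g\rangle)\in R_a^c$ because $\Gamma^\#_a\subseteq\Delta$ and $g_a^\phi=f_a^\phi$; and $\phi_3\in\Delta$ is built in.

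The heart of the argument is $(s_{\phi_1},\phi_2)\notin G$. First, $\phi_2\notin\Lambda$: otherwise the generalized necessitation rule for $Ky_a^r$ yields $\vdash Ky_a^r(\phi_1,\phi_2)$, hence $Ky_a^r(\phi_1,\phi_2)\in\Gamma$, against the hypothesis; so $(s_{\phi_1},\phi_2)$ cannot enter $G_0$ through the tautology-ground clause. Second, it cannot enter $G_0$ through condition~(3): such a pair is $(g_b^\phi(\psi),\psi)$ with $\psi=\phi_2$ and, by condition~(4), $\phi=\phi_1$; for $b=a$ this is impossible since $\phi_2\notin\mathsf{dom}(f_a^{\phi_1})$ (because $Ky_a^r(\phi_1,\phi_2)\notin\Gamma$), and for $b\neq a$ we have $g_b^{\phi_1}(\psi)=(\chi_0)_{\phi_1}\neq s_{\phi_1}$ since $\chi_0\neq s$. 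Hence $(s_{\phi_1},\phi_2)\notin G_0$, so any witness that it lies in $G$ is a non-trivial derivation by condition~(1) from members of $G_0$.

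It therefore remains to refute such a derivation, and this is the delicate step. In any application of condition~(1) the produced explanation is $(u\cdot v)_{\alpha\land\beta}$, so its term-part properly contains those of the two premises; walking up the derivation of $(s_{\phi_1},\phi_2)$, every node strictly below the root thus has term-part a proper subterm of $s$ in the sense of $\succ$. Since $\chi_0$ is not a subterm of $s$, no leaf can be a pair contributed to $G_0$ by condition~(3) with $b\neq a$; every leaf is therefore $(e_\top,\chi)$ with $\chi\in\Lambda$ or $(f_a^\phi(\psi),\psi)$ with $\psi\in\mathsf{dom}(f_a^\phi)$, and in both cases $Ky_a^r$ of its index and its formula lies in $\Gamma$ — for the first because $Ky_a^r(\top,\chi)$ is provable by generalized necessitation, for the second by definition of $\mathsf{dom}(f_a^\phi)$. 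An induction up the derivation, using exactly the axiom \emph{(EKyR)}, namely $Ky_a^r(\alpha,\phi\rightarrow\psi)\rightarrow(Ky_a^r(\beta,\phi)\rightarrow Ky_a^r(\alpha\land\beta,\psi))$, at each application of condition~(1), then shows $Ky_a^r(\gamma,\delta)\in\Gamma$ for every node $(t_\gamma,\delta)$; at the root this gives $Ky_a^r(\phi_1,\phi_2)\in\Gamma$, a contradiction. The main obstacle is precisely this: recognising that the closure under condition~(1) cannot reintroduce the forbidden pair, which rests on choosing the $b\neq a$ data with term-parts outside the subterms of $s$ and on the exact correspondence between condition~(1) and the axiom \emph{(EKyR)}; everything else is bookkeeping in the style of \cite{XWS2016}.
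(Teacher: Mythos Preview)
Your argument is correct, and it takes a genuinely different route from the paper. The paper sets $\Delta=\Gamma$ and builds $G$ by \emph{modifying} $F$: it removes every pair $(t_\phi,\psi)\in F$ with $Ky_a^r(\phi,\psi)\notin\Gamma$ and reinserts it as $((t\cdot s)_\phi,\psi)$, then closes under condition~(1). The invariant it proves is that whenever $Ky_a^r(\phi,\psi)\notin\Gamma$ and $(t_\phi,\psi)\in G_n$, the term $t$ has $s$ as a proper subterm; since $s\not\succ s$, the target pair never reappears. Your construction instead throws $F$ away and rebuilds $G$ from scratch, using a fresh atom $\chi_0$ for the $b\neq a$ data so that those contributions are syntactically barred from the subterm tree of $s$; then you run a single upward (EKyR)-induction from the leaves to the root to force $Ky_a^r(\phi_1,\phi_2)\in\Gamma$. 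Both proofs ultimately rest on the same two ingredients, the strict-subterm decrease along condition~(1) derivations and the axiom \emph{(EKyR)}, but they deploy them in opposite directions: the paper pushes the ``contains $s$'' property down the chain construction, you pull ``$Ky_a^r(\cdot,\cdot)\in\Gamma$'' up the derivation tree.

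Two minor remarks. First, your detour through Lindenbaum is unnecessary: since $\Delta^-\subseteq\Gamma$, you may simply take $\Delta=\Gamma$, as the paper does, which shortens the bookkeeping and makes $\phi_3\in\Delta$ immediate. Second, the paper's modification-of-$F$ template is reused verbatim (with a different $\Psi$) in the companion Lemma~\ref{lem:kyarexistencepre2}, so its approach buys a uniform treatment of both preparatory lemmas; your fresh-$\chi_0$ trick, while clean here, would need to be re-tailored there.
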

\begin{proof}
Let $a\in\mathcal{A}$ be fixed and $\langle\Gamma,F,\vec{f}\rangle\in W^c$ be as supposed, $\phi_3\in\Gamma$ and $(s_{\phi_1},\phi_2)\in F$. We construct $\langle\Delta,G,\vec{g}\rangle$ as follows:
\begin{enumerate}
\item $\Delta=\Gamma$
\item Take $\Psi=\{(t_\phi,\psi)\in F\mid Ky_a^r(\phi,\psi)\not\in\Gamma\}$ and $\Psi'=\{((t\cdot s)_\phi,\psi)\mid (t_\phi,\psi)\in\Psi\}$
\item $G_0= (F\setminus\Psi)\cup\Psi '$
\item $G_{n+1}=G_n\cup\{((t\cdot r)_{\alpha\land\beta},\psi)\mid (t_\alpha,\phi\rightarrow\psi),(r_\beta,\phi)\in G_n\}$ for all $n\geq 0$
\item $G=\bigcup_{n\in\mathbb{N}}G_n$
\item $g_b^\phi(\psi)=\begin{cases}f_b^\phi(\psi),&\text{if }(f_b^\phi(\psi),\psi)\not\in\Psi\\ (t\cdot s)_\phi,&\text{if }(f_b^\phi(\psi),\psi)\in\Psi\text{ and where }t_\phi=f_b^\phi(\psi)\end{cases}$
\end{enumerate}
\begin{claim}
$\langle\Delta,G,\vec{g}\rangle\in W^c$
\end{claim}
\begin{claimproof}
We check the conditions on $W^c$:
\begin{enumerate}
\item This follows again by the chain construction of $G$ and the built in closure conditions.
\item Let $\phi\in\Lambda$, i.e. $Ky_a^r(\top,\phi)\in\Gamma$ by \emph{(NKyR)} and $(e_\top,\phi)\in F$ as $\langle\Gamma,F,\vec{f}\rangle$ is well-defined, i.e. $(e_\top,\phi)\in F\setminus\Psi\subseteq G_0\subseteq G$.
\item Let $Ky_b^r(\phi,\psi)\land\phi\in\Gamma$ for some $b\in\mathcal{A}$. Thus $(f_b^\phi(\psi),\psi)\in F$. If $Ky_a^r(\phi,\psi)\in\Gamma$, then $(f_b^\phi(\psi),\psi)\not\in\Psi$ and thus $g_b^\phi(\psi)=f_b^\phi(\psi)$ and $(g_b^\phi(\psi),\psi)=(f_b^\phi(\psi),\psi)=(t_\phi,\psi)\in F\setminus\Psi\subseteq G_0\subseteq G$. If $Ky_a^r(\phi,\psi)\not\in\Gamma$, then $(f_b^\phi(\psi),\psi)\in\Psi$ and thus $(g_b^\phi(\psi),\psi)=((t\cdot s)_\phi,\psi)$ for $f_b^\phi(\psi)=t_\phi$ as well as $(g_b^\phi(\psi),\psi)\in\Psi'$ since $(f_b^\phi(\psi),\psi)\in\Psi$, i.e. $(g_b^\phi(\psi),\psi)\in\Psi'\subseteq G_0\subseteq G$.
\item As $f_b^\phi$ is well-defined, $g_b^\phi$ is well-defined as well.
\end{enumerate}
\end{claimproof}
Note, that if $Ky_a^r(\phi,\psi)\not\in\Gamma$, but $(t_\phi,\psi)\in G_0$, then $t\succ s$, as suppose $(t_\phi,\psi)\in G_0$, then either $(t_\phi,\psi)\in F\setminus\Psi$ or $(t_\phi,\psi)\in\Psi'$. For the former, $(t_\phi,\psi)\in F$ and $Ky_a^r(\phi,\psi)\in\Gamma$. Contradiction. For the latter, $t\succ s$ per definition.\newline
\begin{claim}
$(\langle\Gamma,F,\vec{f}\rangle,\langle\Delta,G,\vec{g}\rangle)\in R_a^c$
\end{claim}
\begin{claimproof}
As $\Delta=\Gamma$, obviously $\Gamma^\#_a\subseteq\Delta$. Also $\mathsf{dom}(g_a^\phi)=\mathsf{dom}(f_a^\phi)$ for any $\phi\in\mathcal{L}_{ELKy^r}$ as $\Gamma=\Delta$. Now, take any $f_a^\phi$ and $\psi\in\mathsf{dom}(f_a^\phi)$. As $\psi\in\mathsf{dom}(f_a^\phi)$, it is that $Ky_a^r(\phi,\psi)\in\Gamma$. If $(f_a^\phi(\psi),\psi)\not\in F$(as maybe $\phi\not\in\Gamma$), then $(f_a^\phi(\psi),\psi)\not\in\Psi$, as $\Psi\subseteq F$. Otherwise, still $(f_a^\phi(\psi),\psi)\not\in\Psi$ by $Ky_a^r(\phi,\psi)\in\Gamma$, i.e. either way $g_a^\phi(\psi)=f_a^\phi(\psi)$.
\end{claimproof}
\begin{claim}
If $Ky_a^r(\phi,\psi)\not\in\Gamma$ and $(t_\phi,\psi)\in G_{n+1}\setminus G_n$, then $t\succ s$.
\end{claim}
\begin{claimproof}
Let $Ky_a^r(\phi,\psi)\not\in\Gamma$. Induction on $n$:
\begin{description}[leftmargin=!,labelwidth=\widthof{\bfseries (IB):}]
\item [(IB)] Let $n=0$. Take $(t_\phi,\psi)\in G_1\setminus G_0$, i.e. $\exists (t'_{\phi'},\chi\rightarrow\psi),(t''_{\phi''},\chi)\in G_0$ s.t. $t_\phi=(t'_{\phi'}\cdot t''_{\phi''})$, i.e. $t=(t'\cdot t'')$ and $\phi=\phi'\land\phi''$. We distinguish three cases:
\begin{enumerate}[(i)]
\item $(t'_{\phi'},\chi\rightarrow\psi)\in\Psi'$: Then $t'\succ s$ per def., i.e. $t\succ s$.
\item $(t''_{\phi''},\chi)\in\Psi'$: Then $t''\succ s$ by def., i.e. $t\succ s$.
\item $(t'_{\phi'},\chi\rightarrow\psi),(t''_{\phi''},\chi)\not\in\Psi'$: Thus $(t'_{\phi'},\chi\rightarrow\psi),(t''_{\phi''},\chi)\in F\setminus\Psi$ and therefore $Ky_a^r(\phi',\chi\rightarrow\psi)\in\Gamma$ and $Ky_a^r(\phi'',\chi)\in\Gamma$. Thus by \emph{(EKyR)}, we have $Ky_a^r(\phi'\land\phi'',\psi)\in\Gamma$, i.e. $Ky_a^r(\phi,\psi)\in\Gamma$. Contradiction.
\end{enumerate}
\item [(IS)] Let $n>0$. Take $(t_\phi,\psi)\in G_{n+1}\setminus G_n$. Thus $\exists (t'_{\phi'},\chi\rightarrow\psi),(t''_{\phi''},\chi)\in G_n$ s.t. $t=(t'\cdot t'')$ and $\phi=\phi'\land\phi''$. We also have $Ky_a^r(\phi',\chi\rightarrow\psi)\not\in\Gamma$ or $Ky_a^r(\phi'',\chi)\not\in\Gamma$ as otherwise as above we have $Ky_a^r(\phi'\land\phi'',\psi)=Ky_a^r(\phi,\psi)\in\Gamma$ by \emph{(EKyR)}. Also $(t'_{\phi'},\chi\rightarrow\psi)\not\in G_{n-1}$ or $(t''_{\phi''},\chi)\not\in G_{n-1}$ as otherwise $(t_\phi,\psi)\in G_n$. We distinguish the following cases:
\begin{enumerate}[(i)]
\item $Ky_a^r(\phi',\chi\rightarrow\psi)\not\in\Gamma$, $(t'_{\phi'},\chi\rightarrow\psi)\in G_n\setminus G_{n-1}$: By \textbf{(IH)}, we have that $t'\succ s$, i.e. $t\succ s$.
\item $Ky_a^r(\phi',\chi\rightarrow\psi)\not\in\Gamma$, $(t'_{\phi'},\chi\rightarrow\psi)\in G_{n-1}$: We again divide in two cases:
\begin{enumerate}[(a)]
\item $(t'_{\phi'},\chi\rightarrow\psi)\in G_0$: By the above remark, we have $(t'_{\phi'},\chi\rightarrow\psi)\in G_0$ but $Ky_a^r(\phi',\chi\rightarrow\psi)\not\in\Gamma$, i.e. $t'\succ s$ and thus $t\succ s$.
\item $(t'_{\phi'},\chi\rightarrow\psi)\in G_{k+1}\setminus G_k$ for some $0\leq k\leq n-2$: We again apply \textbf{(IH)}, and derive $t'\succ s$, i.e. $t\succ s$.
\end{enumerate}
\item $Ky_a^r(\phi'',\chi)\not\in\Gamma$, $(t''_{\phi''},\chi)\in G_n\setminus G_{n-1}$: By \textbf{(IH)}, we have $t''\succ s$ and thus $t\succ s$.
\item $Ky_a^r(\phi'',\chi)\not\in\Gamma$, $(t''_{\phi''},\chi)\in G_{n-1}$: As above, we divide in two similar cases:
\begin{enumerate}[(a)]
\item $(t''_{\phi''},\chi)\in G_0$: By the one remark above, we have $(t''_{\phi''},\chi)\in G_0$ but $Ky_a^r(\phi'',\chi)\not\in\Gamma$, i.e. $t''\succ s$ and thus $t\succ s$.
\item $(t''_{\phi''},\chi)\in G_{k+1}\setminus G_k$ for some $0\leq k\leq n-2$: We again apply \textbf{(IH)}, and derive $t''\succ s$, i.e. $t\succ s$.
\end{enumerate}
\end{enumerate}
\end{description}
\end{claimproof}
\begin{claim}
$(s_{\phi_1},\phi_2)\not\in G$.
\end{claim}
\begin{claimproof}
We have $(s_{\phi_1},\phi_2)\in F$ but $Ky_a^r(\phi_1,\phi_2)\not\in\Gamma$. Suppose that $(s_{\phi_1},\phi_2)\in G$, i.e. either $(s_{\phi_1},\phi_2)\in G_0$, or $\exists k\in\mathbb{N}$, s.t. $(s_{\phi_1},\phi_2)\in G_{k+1}\setminus G_k$. For the former, we have that $s\succ s$. Contradiction. For the latter, by the previous claim, we have $s\succ s$. Contradiction. Thus $(s_{\phi_1},\phi_2)\not\in G$.
\end{claimproof}
\newline
Note, that $\phi_3\in\Delta$ as $\Delta=\Gamma$.
\end{proof}
\begin{lemma}\label{lem:kyarexistencepre2}
Let $\langle\Gamma,F,\vec{f}\rangle\in W^c$ and $Ky_a^r(\phi_1,\phi_2)\in\Gamma$ but $\phi_1\not\in\Gamma$. For each $\phi_3\in\Gamma$ and for any $(s_{\phi_1},\phi_2)\in F$, there exists a world $\langle\Delta,G,\vec{g}\rangle\in W^c$ s.t. $\phi_3\in\Delta$, $(\langle\Gamma,F,\vec{f}\rangle,\langle\Delta,G,\vec{g}\rangle)\in R_a^c$ and $(s_{\phi_1},\phi_2)\not\in G$.
\end{lemma}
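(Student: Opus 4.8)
The plan is to follow the template of Lemma~\ref{lem:kyarexistencepre1}, but to take advantage of the hypothesis $\phi_1\notin\Gamma$, which makes the construction much lighter: no replacement of ``bad'' explanations by $s$-combinations and no proper-subterm induction are needed here. Concretely, I would set $\Delta=\Gamma$ and $\vec g=\vec f$, and define
\[
\Psi=\{(t_\chi,\rho)\in F\mid\chi\notin\Gamma\},\qquad G=F\setminus\Psi.
\]
The two easy requirements are then immediate: $\phi_3\in\Delta=\Gamma$ by hypothesis, and $(s_{\phi_1},\phi_2)\in\Psi$ since $(s_{\phi_1},\phi_2)\in F$ with left-index $\phi_1\notin\Gamma$, so $(s_{\phi_1},\phi_2)\notin G$. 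For accessibility, $\Gamma^\#_a\subseteq\Gamma$ holds by \emph{(T)} and $g_a^\phi=f_a^\phi$ for every $\phi$ by construction, so $(\langle\Gamma,F,\vec f\rangle,\langle\Delta,G,\vec g\rangle)\in R_a^c$.

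The substance of the proof is then checking $\langle\Delta,G,\vec g\rangle\in W^c$, i.e.\ conditions \emph{(1)}--\emph{(4)} for $G$ and $\vec g$. The key observation driving all of them is that every pair that the well-definedness conditions force into a world has a left-index in $\Gamma$, so the (agent-independent) deletion of all pairs with left-index outside $\Gamma$ removes no such obligation. In detail: if $(s_\alpha,\chi\rightarrow\rho),(r_\beta,\chi)\in G$ then $\alpha,\beta\in\Gamma$, hence $\alpha\land\beta\in\Gamma$ by Prop.~\ref{prop:max_con_set_prop}(iii), while $((s\cdot r)_{\alpha\land\beta},\rho)\in F$ because $F$ already satisfies \emph{(1)}; its left-index $\alpha\land\beta$ being in $\Gamma$, it escapes $\Psi$ and lies in $G$, so \emph{(1)} holds. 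For \emph{(2)}, $(e_\top,\phi)\in F$ whenever $\phi\in\Lambda$ and its left-index $\top$ is in $\Gamma$, so $(e_\top,\phi)\in G$. For \emph{(3)}, $Ky_b^r(\phi,\psi)\land\phi\in\Delta=\Gamma$ gives $\phi\in\Gamma$, and by \emph{(3)} for $F$ we get $(f_b^\phi(\psi),\psi)\in F$, which by \emph{(4)} for $F$ is of the form $(t_\phi,\psi)$ with left-index $\phi\in\Gamma$, hence it survives in $G$; as $g_b^\phi=f_b^\phi$ this is \emph{(3)} for $\langle\Delta,G,\vec g\rangle$. Condition \emph{(4)} and the domains of the $g_b^\phi$ are inherited unchanged from $\vec f$ because $\Delta=\Gamma$.

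I do not expect a genuine obstacle beyond this bookkeeping. The only conceptually delicate point is the closure check for $G$ together with \emph{(3)}: here it works precisely because the deletion criterion ``left-index $\notin\Gamma$'' is agent-independent, whereas in Lemma~\ref{lem:kyarexistencepre1} the analogous criterion ``$Ky_a^r(\phi,\psi)\notin\Gamma$'' is tied to the fixed agent $a$ and can clash with condition \emph{(3)} for other agents, which is what forces the heavier $\Psi'$/subterm machinery there. (Note also that the hypothesis $Ky_a^r(\phi_1,\phi_2)\in\Gamma$ is not actually used in this argument; it only records which case of the truth lemma this lemma serves.)
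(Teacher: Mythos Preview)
Your proof is correct and genuinely simpler than the paper's. The paper mirrors the construction of Lemma~\ref{lem:kyarexistencepre1} verbatim: it keeps the replacement set $\Psi'=\{((t\cdot s)_\phi,\psi)\mid(t_\phi,\psi)\in\Psi\}$, sets $G_0=(F\setminus\Psi)\cup\Psi'$, takes the chain closure $G=\bigcup_n G_n$, and then runs the full proper-subterm induction to show $(s_{\phi_1},\phi_2)\notin G$. You observe instead that with the deletion criterion ``left-index $\notin\Gamma$'' the set $F\setminus\Psi$ is \emph{already} closed under condition~\emph{(1)} (since $\alpha,\beta\in\Gamma$ gives $\alpha\land\beta\in\Gamma$) and already satisfies~\emph{(3)} (since the hypothesis $Ky_b^r(\phi,\psi)\land\phi\in\Gamma$ forces $\phi\in\Gamma$, so nothing needed for~\emph{(3)} is ever removed). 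Hence no $\Psi'$, no chain closure, and no subterm argument are required; $(s_{\phi_1},\phi_2)\notin G$ is immediate. Your diagnosis of why the heavier machinery is unavoidable in Lemma~\ref{lem:kyarexistencepre1} --- the agent-dependent criterion $Ky_a^r(\phi,\psi)\notin\Gamma$ can delete pairs that condition~\emph{(3)} for some $b\neq a$ still demands --- is exactly right. What the paper's approach buys is uniformity of presentation across the two lemmas; what yours buys is a markedly shorter and more transparent argument. Your final parenthetical remark is also accurate: the hypothesis $Ky_a^r(\phi_1,\phi_2)\in\Gamma$ is not used in either proof.
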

\begin{proof}
Let $a\in\mathcal{A}$ be fixed and $\langle\Gamma,F,\vec{f}\rangle\in W^c$ with $Ky_a^r(\phi_1,\phi_2)\in\Gamma$ but $\phi_1\not\in\Gamma$. Take $\phi_3\in\Gamma$ and $(s_{\phi_1},\phi_2)\in F$. We again construct $\langle\Delta,G,\vec{g}\rangle$ as follows:
\begin{enumerate}
\item $\Delta=\Gamma$
\item Take $\Psi=\{(t_\phi,\psi)\in F\mid\phi\not\in\Gamma\}$ and $\Psi'=\{((t\cdot s)_\phi,\psi)\mid (t_\phi,\psi)\in\Psi\}$
\item $G_0= (F\setminus\Psi)\cup\Psi '$
\item $G_{n+1}=G_n\cup\{((t\cdot r)_{\alpha\land\beta},\psi)\mid (t_\alpha,\phi\rightarrow\psi),(r_\beta,\phi)\in G_n\}$ for all $n\geq 0$
\item $G=\bigcup_{n\in\mathbb{N}}G_n$
\item $g_b^\phi(\psi)=f_b^\phi(\psi)$ for all $b\in\mathcal{A}$, $\phi\in\mathcal{L}_{ELKy^r}$ and $\psi\in\mathsf{dom}(g_b^\phi)$
\end{enumerate}
\begin{claim}
$\langle\Delta,G,\vec{g}\rangle\in W^c$
\end{claim}
\begin{claimproof}
We check the conditions of $W^c$:
\begin{enumerate}
\item This follows directly by the chain construction of $G$.
\item Let $\phi\in\Lambda$, i.e. $(e_\top,\phi)\in F$, and as $\top\in\Gamma$ (Prop. \ref{prop:max_con_set_prop}), we have $(e_\top,\phi)\not\in\Psi$, i.e. $(e_\top,\phi)\in G_0\subseteq G$.
\item Let $Ky_b^r(\phi,\psi)\land\phi\in\Gamma$ for some $b\in\mathcal{A}$. Thus $(f_b^\phi(\psi),\psi)\in F$ and as $\phi\in\Gamma$ (Prop. \ref{prop:max_con_set_prop}) and $f_b^\phi(\psi)=t_\phi$, we have $(f_b^\phi(\psi),\psi)\not\in\Psi$, i.e. as $g_b^\phi(\psi)=f_b^\phi(\psi)$, we have $(f_b^\phi(\psi),\psi)=(g_b^\phi(\psi),\psi)\in G_0\subseteq G$.
\item As $\vec{f}$ is well-defined, $\vec{g}$ is well-defined as well.
\end{enumerate}
\end{claimproof}
Note, in analogy to the proof of Lemma \ref{lem:kyarexistencepre1}, that if $\phi\not\in\Gamma$ but $(t_\phi,\psi)\in G_0$, then $t\succ s$.
\begin{claim}
$(\langle\Gamma,F,\vec{f}\rangle,\langle\Delta,G,\vec{g}\rangle)\in R_a^c$
\end{claim}
\begin{claimproof}
As $\Delta=\Gamma$, we have $\Gamma^\#_a\subseteq\Delta$ and per definition, we have $g_a^\phi(\psi)=f_a^\phi(\psi)$.
\end{claimproof}
\begin{claim}
If $\phi\not\in\Gamma$ and $(t_\phi,\psi)\in G_{n+1}\setminus G_n$, then $t\succ s$.
\end{claim}
\begin{claimproof}
Let $\phi\not\in\Gamma$. Induction on $n$:
\begin{description}[leftmargin=!,labelwidth=\widthof{\bfseries (IB):}]
\item [(IB)] Let $n=0$. Let $(t_\phi,\psi)\in G_1\setminus G_0$, i.e. $\exists (t'_{\phi'},\chi\rightarrow\psi), (t''_{\phi''},\chi)\in G_0$ s.t. $t=(t'\cdot t'')$ and $\phi=\phi'\land\phi''$. We again divide in three cases:
\begin{enumerate}[(i)]
\item $(t'_{\phi'},\chi\rightarrow\psi)\in\Psi'$: By def., $t'\succ s$, i.e. $t\succ s$.
\item $(t''_{\phi''},\chi)\in\Psi'$: Again by def. of $\Psi'$, $t''\succ s$, i.e. $t\succ s$.
\item $(t'_{\phi'},\chi\rightarrow\psi), (t''_{\phi''},\chi)\not\in\Psi'$: As $(t'_{\phi'},\chi\rightarrow\psi)\not\in\Psi'$ we have $(t'_{\phi'},\chi\rightarrow\psi)\in F\setminus\Psi$, i.e. $(t'_{\phi'},\chi\rightarrow\psi)\in F$ and $(t'_{\phi'},\chi\rightarrow\psi)\not\in\Psi$, thus $\phi'\in\Gamma$ and similarly, as $(t''_{\phi''},\chi)\not\in\Psi'$, we have $\phi''\in\Gamma$. Thus $\phi=\phi'\land\phi''\in\Gamma$ by Prop. \ref{prop:max_con_set_prop}. Contradiction.
\end{enumerate}
\item [(IS)] Let $n>0$ and $(t_\phi,\psi)\in G_{n+1}\setminus G_n$, i.e. $\exists (t'_{\phi'},\chi\rightarrow\psi), (t''_{\phi''},\chi)\in G_n$ s.t. $t=(t'\cdot t'')$ and $\phi=\phi'\land\phi''$. Note that not both $(t'_{\phi'},\chi\rightarrow\psi), (t''_{\phi''},\chi)\in G_{n-1}$, as otherwise $(t_\phi,\psi)\in G_n$. Also not both $\phi',\phi''\in\Gamma$, as otherwise $\phi'\land\phi''=\phi\in\Gamma$ as before by Prop. \ref{prop:max_con_set_prop}. We divide between four cases:
\begin{enumerate}[(i)]
\item $\phi'\not\in\Gamma$, $(t'_{\phi'},\chi\rightarrow\psi)\in G_n\setminus G_{n-1}$: By \textbf{(IH)}, we have $t'\succ s$, i.e. $t\succ s$.
\item $\phi'\not\in\Gamma$, $(t'_{\phi'},\chi\rightarrow\psi)\in G_{n-1}$: We divide between the following cases:
\begin{enumerate}[(a)]
\item $(t'_{\phi'},\chi\rightarrow\psi)\in G_0$: Thus by the above remark, we have $t'\succ s$.
\item $(t'_{\phi'},\chi\rightarrow\psi)\in G_{k+1}\setminus G_k$ for some $0\leq k\leq n-2$: By \textbf{(IH)}, we derive $t'\succ s$, i.e. $t\succ s$.
\end{enumerate}
\item $\phi''\not\in\Gamma$, $(t''_{\phi''},\chi)\in G_n\setminus G_{n-1}$: By \textbf{(IH)}, we have $t''\succ s$, i.e. $t\succ s$.
\item $\phi''\not\in\Gamma$, $(t''_{\phi''},\chi)\in G_{n-1}$: We lastly divide between the following cases:
\begin{enumerate}[(a)]
\item $(t''_{\phi''},\chi)\in G_0$: Again, by the base case, we have $t''\succ s$.
\item $(t''_{\phi''},\chi)\in G_{k+1}\setminus G_k$ for some $0\leq k\leq n-2$: By \textbf{(IH)}, we derive $t''\succ s$, i.e. $t\succ s$.
\end{enumerate}
\end{enumerate}
\end{description}
\end{claimproof}
\begin{claim}
$(s_{\phi_1},\phi_2)\not\in G$.
\end{claim}
\begin{claimproof}
By supposition $(s_{\phi_1},\phi_2)\in F$, but $\phi_1\not\in\Gamma$. Suppose for contradiction, that $(s_{\phi_1},\phi_2)\in G$, i.e. $(s_{\phi_1},\phi_2)\in G_0$ or $(s_{\phi_1},\phi_2)\in G_{k+1}\setminus G_k$ for some $k\in\mathbb{N}$. For the former, we'd have $s\succ s$. Contradiction. For the latter, by the previously established claim, we have also $s\succ s$. Contradiction. Thus $(s_{\phi_1},\phi_2)\not\in G$.
\end{claimproof}
\newline
Again, we have that $\phi_3\in\Delta$ as $\Delta=\Gamma$.
\end{proof}
\begin{lemma}[$Ky_a^r$ existence lemma]\label{lem:kyarexistence}
Let $\langle\Gamma,F,\vec{f}\rangle\in W^c$ and $Ky_a^r(\phi_1,\phi_2)\not\in\Gamma$. For any $s_{\phi_3}\in E^c$, there exists a world $\langle\Delta,G,\vec{g}\rangle\in W^c$ s.t. $(\langle\Gamma,F,\vec{f}\rangle,\langle\Delta,G,\vec{g}\rangle)\in R_a^c$, $\phi_1\in\Delta$ and $(s_{\phi_3},\phi_2)\not\in G$.
\end{lemma}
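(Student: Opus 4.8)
The plan is to deduce the lemma from the two preliminary existence lemmas \ref{lem:kyarexistencepre1} and \ref{lem:kyarexistencepre2} together with the neighbour‑construction Lemma \ref{lem:neighbourconst}, via a case distinction on whether $Ky_a^r(\phi_3,\phi_2)\in\Gamma$ or $Ky_a^r(\phi_3,\phi_2)\notin\Gamma$, where $\phi_3$ is the condition index of the given candidate explanation $s_{\phi_3}$. First I would produce a suitable target set $\Delta^*$: from $Ky_a^r(\phi_1,\phi_2)\notin\Gamma$ and the axiom \emph{(UKyR)} one gets $K_a\neg\phi_1\notin\Gamma$, hence $\{\phi_1\}\cup\Gamma^\#_a$ is consistent by the standard argument (using \emph{(NK)}, \emph{(K)} and deductive closure, exactly as in the proof of Lem. \ref{lem:kaexistence}); extending it to a maximal consistent $\Delta^*$ by Prop. \ref{prop:lindenbaum} gives $\phi_1\in\Delta^*$ and $\Gamma^\#_a\subseteq\Delta^*$. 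In the sub‑case $Ky_a^r(\phi_3,\phi_2)\in\Gamma$ I would additionally note $K_a(\phi_1\rightarrow\phi_3)\notin\Gamma$, since otherwise \emph{(IKyR)} would force $Ky_a^r(\phi_1,\phi_2)\in\Gamma$, against the hypothesis; by the same consistency argument applied to $\{\phi_1,\neg\phi_3\}\cup\Gamma^\#_a$ one may then choose $\Delta^*$ with $\phi_1\in\Delta^*$, $\Gamma^\#_a\subseteq\Delta^*$ and $\phi_3\notin\Delta^*$.

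Next, Lem. \ref{lem:neighbourconst} furnishes a world $\langle\Delta^*,G^*,\vec{g}^*\rangle\in W^c$ with $(\langle\Gamma,F,\vec{f}\rangle,\langle\Delta^*,G^*,\vec{g}^*\rangle)\in R_a^c$. If $(s_{\phi_3},\phi_2)\notin G^*$ we are done, taking $\langle\Delta,G,\vec{g}\rangle:=\langle\Delta^*,G^*,\vec{g}^*\rangle$, since $\phi_1\in\Delta^*$. Otherwise I would apply, to the world $\langle\Delta^*,G^*,\vec{g}^*\rangle$, one of the preliminary lemmas with $\phi_1$ in the role of the retained formula and $(s_{\phi_3},\phi_2)$ as the pair to be deleted: if $Ky_a^r(\phi_3,\phi_2)\notin\Gamma$, then $Ky_a^r(\phi_3,\phi_2)\notin\Delta^*$ by Prop. \ref{prop:accesprop}, so Lem. \ref{lem:kyarexistencepre1} yields $\langle\Delta^*,G',\vec{g}'\rangle\in W^c$ with $\phi_1\in\Delta^*$, $(\langle\Delta^*,G^*,\vec{g}^*\rangle,\langle\Delta^*,G',\vec{g}'\rangle)\in R_a^c$ and $(s_{\phi_3},\phi_2)\notin G'$; if instead $Ky_a^r(\phi_3,\phi_2)\in\Gamma$, then $Ky_a^r(\phi_3,\phi_2)\in\Delta^*$ while $\phi_3\notin\Delta^*$, so Lem. \ref{lem:kyarexistencepre2} gives the analogous world. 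Finally, since $R_a^c$ is transitive, composing $(\langle\Gamma,F,\vec{f}\rangle,\langle\Delta^*,G^*,\vec{g}^*\rangle)\in R_a^c$ with the new $R_a^c$‑step gives $(\langle\Gamma,F,\vec{f}\rangle,\langle\Delta^*,G',\vec{g}'\rangle)\in R_a^c$, and $\langle\Delta^*,G',\vec{g}'\rangle$ has all the required properties.

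The only real subtlety — everything else being plumbing over the three cited lemmas and transitivity of $R_a^c$ — is the bookkeeping around the condition index $\phi_3$ of the candidate explanation: one must ensure that in the final world the premise $\phi_3$ attached to $s_{\phi_3}$ is either not certified ($Ky_a^r(\phi_3,\phi_2)\notin\Delta^*$) or not satisfied ($\phi_3\notin\Delta^*$), so that condition (3) of $W^c$ does not re‑insert $(s_{\phi_3},\phi_2)$ into the explanation set and the chain‑closure (condition (1)) does not reproduce it; engineering exactly this dichotomy is the whole purpose of the case split and of the appeal to \emph{(UKyR)} and \emph{(IKyR)} in the first step, after which the actual deletion (and the verification that the closure cannot bring the pair back, by the proper‑subterm argument) is delegated to Lem. \ref{lem:kyarexistencepre1}/\ref{lem:kyarexistencepre2}.
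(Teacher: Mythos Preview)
Your proposal is correct and follows essentially the same route as the paper: the same case split on whether $Ky_a^r(\phi_3,\phi_2)\in\Gamma$, the same use of \emph{(UKyR)} and \emph{(IKyR)} to set up the target maximal consistent set, and the same appeal to Lemmas \ref{lem:kyarexistencepre1}/\ref{lem:kyarexistencepre2} plus transitivity of $R_a^c$. The only cosmetic differences are that the paper first checks whether $\phi_1\in\Gamma$ (saving the passage through Lem.~\ref{lem:neighbourconst} when it already holds) and, in the case $Ky_a^r(\phi_3,\phi_2)\in\Gamma$, invokes the $K_a$ existence lemma (Lem.~\ref{lem:kaexistence}) directly rather than redoing the consistency argument for $\{\phi_1,\neg\phi_3\}\cup\Gamma^\#_a$ by hand.
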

\begin{proof}
Let $a\in\mathcal{A}$ be fixed and $\langle\Gamma,F,\vec{f}\rangle\in W^c$ s.t. $Ky_a^r(\phi_1,\phi_2)\not\in\Gamma$. Take $s_{\phi_3}\in E^c$. We divide between two cases: \emph{(i):} $Ky_a^r(\phi_3,\phi_2)\not\in\Gamma$ and \emph{(ii):} $Ky_a^r(\phi_3,\phi_2)\in\Gamma$.
\begin{enumerate}[(i)]
\item Let $Ky_a^r(\phi_3,\phi_2)\not\in\Gamma$. Suppose that $\phi_1\in\Gamma$. If $(s_{\phi_3},\phi_2)\not\in F$, there is nothing to do, as $\langle\Gamma,F,\vec{f}\rangle$ can reach itself. If $(s_{\phi_3},\phi_2)\in F$, then by Lem. \ref{lem:kyarexistencepre1}, we get the desired world.\\\\
Suppose that $\phi_1\not\in\Gamma$. Then $\Gamma^\#_a\cup\{\phi_1\}$ is still consistent, as supp. otherwise, then for some $\psi_1,\dots,\psi_n\in\Gamma^\#_a$:
\[
\vdash_\mathbb{SKYR}\bigwedge_{i=1}^n\psi_i\rightarrow\neg\phi_1
\]
i.e. by \emph{(NK)}, distribution over $\land$ and axiom \emph{(K)}, we have
\[
\vdash_\mathbb{SKYR}\bigwedge_{i=1}^nK_a\psi_i\rightarrow K_a\neg\phi_1
\]
Thus $K_a\neg\phi_1\in\Gamma$ and by \emph{(UKyR)}, we'd have $Ky_a^r(\phi_1,\phi_2)\in\Gamma$. Contradiction.\\\\
Thus, we extend this set $\Gamma^\#_a\cup\{\phi_1\}$ to a maximal consistent one, say $\Delta$. Now, there is a world $\langle\Delta,G,\vec{g}\rangle\in W^c$ with $(\langle\Gamma,F,\vec{f}\rangle,\langle\Delta,G,\vec{g}\rangle)\in R_a^c$ and $\phi_1\in\Delta$ by Lem. \ref{lem:neighbourconst}.

Also, as of Prop. \ref{prop:accesprop}, we have $Ky_a^r(\phi_1,\phi_2)\not\in\Delta$ and  $Ky_a^r(\phi_3,\phi_2)\not\in\Delta$. Again, if $(s_{\phi_3},\phi_2)\not\in G$, then there is nothing to do anymore. If $(s_{\phi_3},\phi_2)\in G$, then by Lem. \ref{lem:kyarexistencepre1}, we find a world $\langle\Theta,H,\vec{h}\rangle$ s.t. $(\langle\Delta,G,\vec{g}\rangle,\langle\Theta,H,\vec{h}\rangle)\in R_a^c$, i.e. $(\langle\Gamma,F,\vec{f}\rangle,\langle\Theta,H,\vec{h}\rangle)\in R_a^c$ by transitivity, and with $\phi_1\in\Theta$ and $(s_{\phi_3},\phi_2)\not\in H$.
\item Let $Ky_a^r(\phi_3,\phi_2)\in\Gamma$. Thus $\neg K_a(\phi_1\rightarrow\phi_3)\in\Gamma$, as otherwise $Ky_a^r(\phi_1,\phi_2)\in\Gamma$ by \emph{(IKyR)}. Thus, by Lem. \ref{lem:kaexistence}, there is a world $\langle\Delta,G,\vec{g}\rangle\in W^c$ s.t. $(\langle\Gamma,F,\vec{f}\rangle,\langle\Delta,G,\vec{g}\rangle)\in R_a^c$ and $\neg(\phi_1\rightarrow\phi_3)\in\Delta$, i.e. $\phi_1\land\neg\phi_3\in\Delta$ and thus $\phi_1\in\Delta$ and $\neg\phi_3\in\Delta$. Obviously, again by Prop. \ref{prop:accesprop}, we still have $Ky_a^r(\phi_1,\phi_2)\not\in\Delta$ and $Ky_a^r(\phi_3,\phi_2)\in\Delta$. If $(s_{\phi_3},\phi_2)\not\in G$, then we're done. Otherwise, by Lem. \ref{lem:kyarexistencepre2}, there exists a world $\langle\Theta,H,\vec{h}\rangle\in W^c$ s.t. $(\langle\Delta,G,\vec{g}\rangle,\langle\Theta,H,\vec{h}\rangle)\in R_a^c$, and thus $(\langle\Gamma,F,\vec{f}\rangle,\langle\Theta,H,\vec{h}\rangle)\in R_a^c$ again by transitivity, where we have $\phi_1\in\Theta$ and $(s_{\phi_3},\phi_2)\not\in H$.
\end{enumerate}
\end{proof}
Following from these considerations, we now propose the truth lemma corresponding to our canonical model $\mathfrak{M}^c$.
\begin{lemma}[Truth]\label{lem:truth}
$(\mathfrak{M}^c,\langle\Gamma,F,\vec{f}\rangle)\models\phi$ if and only if $\phi\in\Gamma$ for all $\phi\in\mathcal{L}_{ELKy^r}$ and all $\langle\Gamma,F,\vec{f}\rangle\in W^c$.
\end{lemma}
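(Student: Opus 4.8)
The plan is to prove the statement by induction on the structure of $\phi\in\mathcal{L}_{ELKy^r}$. The atomic case $\phi=p$ is immediate from the definition of $V^c$, since $\langle\Gamma,F,\vec{f}\rangle\in V^c(p)$ holds exactly when $p\in\Gamma$. The Boolean cases $\phi=\neg\psi$ and $\phi=\psi\land\chi$ reduce to the induction hypothesis together with the standard closure properties of maximal consistent sets collected in Prop.~\ref{prop:max_con_set_prop} (namely $\psi\in\Gamma$ iff $\neg\psi\notin\Gamma$, and $\psi,\chi\in\Gamma$ iff $\psi\land\chi\in\Gamma$). The $K_a$-case is the usual one for canonical models: for the direction $(\Leftarrow)$, if $K_a\psi\in\Gamma$ then every $R_a^c$-successor $\langle\Delta,G,\vec{g}\rangle$ satisfies $\Gamma^\#_a\subseteq\Delta$, hence $\psi\in\Delta$ and the induction hypothesis applies; for $(\Rightarrow)$ one argues contrapositively, invoking the $K_a$ existence lemma (Lem.~\ref{lem:kaexistence}) to produce an $R_a^c$-successor containing $\neg\psi$.

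The substance of the argument is the case $\phi=Ky_a^r(\phi_1,\phi_2)$. For the direction $(\Leftarrow)$, assume $Ky_a^r(\phi_1,\phi_2)\in\Gamma$, so that $\phi_2\in\mathsf{dom}(f_a^{\phi_1})$; I take $t:=f_a^{\phi_1}(\phi_2)\in E^c$ as the existential witness demanded by the semantics of $Ky_a^r$. Let $\langle\Delta,G,\vec{g}\rangle$ be any world with $(\langle\Gamma,F,\vec{f}\rangle,\langle\Delta,G,\vec{g}\rangle)\in R_a^c$ and $(\mathfrak{M}^c,\langle\Delta,G,\vec{g}\rangle)\models\phi_1$; by the induction hypothesis $\phi_1\in\Delta$, the definition of $R_a^c$ gives $f_a^{\phi_1}=g_a^{\phi_1}$, and Prop.~\ref{prop:accesprop} gives $Ky_a^r(\phi_1,\phi_2)\in\Delta$. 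Hence $Ky_a^r(\phi_1,\phi_2)\land\phi_1\in\Delta$, so condition~(3) in the definition of $W^c$ yields $(g_a^{\phi_1}(\phi_2),\phi_2)\in G$, i.e. $\langle\Delta,G,\vec{g}\rangle\in\mathcal{E}^c(t,\phi_2)$, which is clause~(1) of the semantics. For clause~(2), from $Ky_a^r(\phi_1,\phi_2)\in\Delta$ and axiom \emph{(DKyR)} we get $K_a(\phi_1\rightarrow\phi_2)\in\Delta$, whence $\phi_1\rightarrow\phi_2\in\Delta$ by \emph{(T)}, and since $\phi_1\in\Delta$ we obtain $\phi_2\in\Delta$, so $(\mathfrak{M}^c,\langle\Delta,G,\vec{g}\rangle)\models\phi_2$ by the induction hypothesis. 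As $\langle\Delta,G,\vec{g}\rangle$ was arbitrary among the relevant successors, $t$ witnesses $(\mathfrak{M}^c,\langle\Gamma,F,\vec{f}\rangle)\models Ky_a^r(\phi_1,\phi_2)$.

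For the direction $(\Rightarrow)$ I argue contrapositively: suppose $Ky_a^r(\phi_1,\phi_2)\notin\Gamma$ and let $t\in E^c$ be arbitrary; writing $t=s_{\phi_3}$ (every element of $E^c$ has this shape, including $e_\top$), the $Ky_a^r$ existence lemma (Lem.~\ref{lem:kyarexistence}) supplies a world $\langle\Delta,G,\vec{g}\rangle\in W^c$ with $(\langle\Gamma,F,\vec{f}\rangle,\langle\Delta,G,\vec{g}\rangle)\in R_a^c$, $\phi_1\in\Delta$ and $(s_{\phi_3},\phi_2)\notin G$. By the induction hypothesis $(\mathfrak{M}^c,\langle\Delta,G,\vec{g}\rangle)\models\phi_1$, while $(s_{\phi_3},\phi_2)\notin G$ means $\langle\Delta,G,\vec{g}\rangle\notin\mathcal{E}^c(t,\phi_2)$, so clause~(1) of the semantics fails for this $t$. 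Since $t$ was arbitrary, no explanation serves as a witness, and therefore $(\mathfrak{M}^c,\langle\Gamma,F,\vec{f}\rangle)\not\models Ky_a^r(\phi_1,\phi_2)$, as required.

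The difficulty is concentrated in the $Ky_a^r$-case and, through it, in the existence lemmas already established. The $(\Leftarrow)$ direction hinges on the fact that the same value $f_a^{\phi_1}(\phi_2)$ is forced by the definition of $R_a^c$ to occur in every successor world, so a \emph{single} witness $t$ works uniformly for all accessible $\phi_1$-worlds, matching the quantifier order $\exists t\,\forall v$ in the semantics; the $(\Rightarrow)$ direction depends entirely on Lem.~\ref{lem:kyarexistence}, whose proof rests on the subterm-counting arguments of Lems.~\ref{lem:kyarexistencepre1} and~\ref{lem:kyarexistencepre2}. Beyond invoking these, the only care needed in the Truth Lemma proper is bookkeeping: checking that $f_a^{\phi_1}(\phi_2)$ is defined (which is exactly the hypothesis $Ky_a^r(\phi_1,\phi_2)\in\Gamma$), that condition~(3) of $W^c$ and Prop.~\ref{prop:accesprop} transport the required membership and the relevant $Ky_a^r$-formula to successor worlds, and that \emph{(DKyR)} together with \emph{(T)} delivers the truth of $\phi_2$ there.
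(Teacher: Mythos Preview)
Your proof is correct and follows essentially the same argument as the paper. The only notable difference is in the contrapositive direction for $Ky_a^r(\phi_1,\phi_2)$: the paper splits into the two cases $K_a(\phi_1\rightarrow\phi_2)\in\Gamma$ and $K_a(\phi_1\rightarrow\phi_2)\notin\Gamma$, handling the second case via the $K_a$-existence lemma (producing a successor where $\phi_1$ holds but $\phi_2$ fails, i.e.\ clause~(2) of the semantics is violated), whereas you bypass this split and invoke Lem.~\ref{lem:kyarexistence} uniformly for every $t\in E^c$ to violate clause~(1). Your route is legitimate and slightly leaner, since the hypothesis of Lem.~\ref{lem:kyarexistence} is just $Ky_a^r(\phi_1,\phi_2)\notin\Gamma$ and does not depend on whether $K_a(\phi_1\rightarrow\phi_2)\in\Gamma$; the paper's case~(ii) is thus redundant once Lem.~\ref{lem:kyarexistence} is available.
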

\begin{proof}
The proof is established by induction over the structure of the formula $\phi$.

For the \emph{induction base}, we consider\\
\textbf{(IB): }$\phi =p$: By the definition of $V^c$, $p\in\Gamma$ iff $\langle\Gamma,F,\vec{f}\rangle\in V^c(p)$ iff $(\mathfrak{M}^c, \langle\Gamma,F,\vec{f}\rangle)\models p$.

For the induction step we now distinguish the following cases for the different structures of the formula $\phi$:\\
\textbf{(IS): } \textbf{(i): }$\phi=\neg\psi$, \textbf{(ii): }$\phi=\psi\land\chi$, \textbf{(iii): }$\phi=K_a\psi$ and \textbf{(iv): }$\phi=Ky_a^r(\psi,\chi)$. As the first three cases are in some sense standard(s. for example Lemma 7.5, \cite{DHK2007}), we will just focus on item \textbf{(iv)}. In the following, let $w_\Gamma,w_\Delta,w_\Theta$ be shorthands for some structures $\langle\Gamma, F,\vec{f}\rangle, \langle\Delta, G,\vec{g}\rangle, \langle\Theta,H,\vec{h}\rangle\in W^c$.\\\\
First, suppose that $Ky_a^r(\psi,\chi)\in\Gamma$. Let $t_\psi:=f_a^\psi(\chi)\in E^c$. Consider an arbitrary world $w_\Delta$ with $(w_\Gamma,w_\Delta)\in R_a^c$ and additionally $(\mathfrak{M}^c,w_\Delta)\models\psi$. By \textbf{(IH)}, we have $\psi\in\Delta$ from the latter. As thus $Ky_a^r(\psi,\chi)\land\psi\in\Delta$ by Prop. \ref{prop:max_con_set_prop}, we have $(g_a^\psi(\chi),\chi)\in G$, i.e. $w_\Delta\in\mathcal{E}^c(g_a^\psi(\chi),\chi)$. As $(w_\Gamma,w_\Delta)\in R_a^c$, we have that $f_a^\psi(\chi)=g_a^\psi(\chi)=t_\psi$, i.e. $w_\Delta\in\mathcal{E}^c(t_\psi,\chi)$. We also have, as $Ky_a^r(\psi,\chi)\in\Delta$, that $K_a(\psi\rightarrow\chi)\in\Delta$ by \emph{(DKyR)} and thus that $\psi\rightarrow\chi\in\Delta$ by \emph{(T)}. As $\psi\in\Delta$, we have $\chi\in\Delta$ by \emph{(MP)}. By \textbf{(IH)}, we have $(\mathfrak{M}^c,w_\Delta)\models\chi$. Putting everything together, we have $(\mathfrak{M}^c,w_\Gamma)\models Ky_a^r(\psi,\chi)$.\\\\
On the other hand, suppose that $Ky_a^r(\psi,\chi)\not\in\Gamma$, therefore $\neg Ky_a^r(\psi,\chi)\in\Gamma$. We consider either \emph{(i):} $K_a(\psi\rightarrow\chi)\in\Gamma$ or \emph{(ii):} $K_a(\psi\rightarrow\chi)\not\in\Gamma$. 

For \emph{(i)}, by Lem. \ref{lem:kyarexistence}, we have that for any $s_{\psi'}\in E^c$, there exists a world $\langle\Delta,G,\vec{g}\rangle\in W^c$ s.t. $(\langle\Gamma,F,\vec{f}\rangle,\langle\Delta,G,\vec{g}\rangle)\in R_a^c$, $\psi\in\Delta$ (by \textbf{(IH)}, $(\mathfrak{M}^c,w_\Delta)\models\psi$) and $(s_{\psi'},\chi)\not\in G$. Thus there is \emph{no} single $s_{\psi'}\in E^c$ such that for all reachable worlds $w_\Delta$ with $\psi\in\Delta$ we have $(s_{\psi'},\chi)\in G$, i.e. there is \emph{no} single $s_{\psi'}\in E^c$ s.t. for all reachable worlds $w_\Delta$ with $(\mathfrak{M}^c,w_\Delta)\models\psi$, we have that $w_\Delta\in\mathcal{E}^c(s_{\psi'},\chi)$. By the semantics of $Ky_a^r$, we have $(\mathfrak{M}^c,w_\Gamma)\not\models Ky_a^r(\psi,\chi)$.

For \emph{(ii)}, i.e. $\neg K_a(\psi\rightarrow\chi)\in\Gamma$, by Lem. \ref{lem:kaexistence}, we have a world $w_\Delta$ with $(w_\Gamma,w_\Delta)\in R_a^c$ such that $\neg(\psi\rightarrow\chi)\in\Delta$. Therefore $\psi\land\neg\chi\in\Delta$ and by \textbf{(IH)}, we have $(\mathfrak{M}^c,w_\Delta)\models\psi$ and $(\mathfrak{M}^c,w_\Delta)\not\models\chi$. Thus $(\mathfrak{M}^c,w_\Gamma)\not\models Ky_a^r(\psi,\chi)$.
\end{proof}
\begin{theorem}[Completeness of $\mathbb{SKYR}$ over $\mathcal{K}y^r\mathcal{S}5$]
$\Gamma\models_{\mathcal{K}y^r\mathcal{S}5}\phi$ implies $\Gamma\vdash_{\mathbb{SKYR}}\phi$.
\end{theorem}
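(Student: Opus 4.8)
The plan is to run the standard Henkin-style argument in contrapositive form: assuming $\Gamma\not\vdash_{\mathbb{SKYR}}\phi$, I would exhibit a pointed $\mathcal{S}5$-$ELKy^r$-model that satisfies every formula of $\Gamma$ but refutes $\phi$, which is exactly $\Gamma\not\models_{\mathcal{K}y^r\mathcal{S}5}\phi$. The first step is purely proof-theoretic: from $\Gamma\not\vdash_{\mathbb{SKYR}}\phi$ one concludes that $\Gamma\cup\{\neg\phi\}$ is consistent, since otherwise a finite conjunction of members of $\Gamma$ would, by finiteness of derivations and the fact that $\mathbb{SKYR}$ contains all classical propositional axioms together with $(MP)$, propositionally entail $\phi$, giving $\Gamma\vdash_{\mathbb{SKYR}}\phi$. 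Next, by Lindenbaum (Prop. \ref{prop:lindenbaum}) I would extend $\Gamma\cup\{\neg\phi\}$ to a maximal consistent set $\Delta\supseteq\Gamma\cup\{\neg\phi\}$.

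Now I invoke the canonical model $\mathfrak{M}^c$ of Def. \ref{def:canonmod}. By the well-definedness proposition established earlier, $\mathfrak{M}^c$ is a genuine $\mathcal{S}5$-$ELKy^r$-model: $E^c$ has a designated explanation $e_\top$ and is closed under $\cdot$, the function $\mathcal{E}^c$ satisfies both required conditions (the $\Lambda$-clause and the modus-ponens/$\cdot$-distribution inclusion), and each $R_a^c$ is reflexive, symmetric and transitive. Moreover, by Prop. \ref{prop:worldexistence} the triple $\langle\Delta,F^\Delta,\vec{f}^\Delta\rangle$ constructed in Def. \ref{def:canonworldconst} is an element of $W^c$, so this point genuinely lives in $\mathfrak{M}^c$ and hence $(\mathfrak{M}^c,\langle\Delta,F^\Delta,\vec{f}^\Delta\rangle)$ is a pointed model in $\mathcal{K}y^r\mathcal{S}5$.

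The final step is an application of the Truth Lemma (Lem. \ref{lem:truth}) at the point $\langle\Delta,F^\Delta,\vec{f}^\Delta\rangle$. Since $\Gamma\subseteq\Delta$, every $\gamma\in\Gamma$ lies in $\Delta$ and therefore $(\mathfrak{M}^c,\langle\Delta,F^\Delta,\vec{f}^\Delta\rangle)\models\gamma$; since $\neg\phi\in\Delta$, by Prop. \ref{prop:max_con_set_prop} we have $\phi\notin\Delta$, so the Truth Lemma gives $(\mathfrak{M}^c,\langle\Delta,F^\Delta,\vec{f}^\Delta\rangle)\not\models\phi$. Thus the pointed $\mathcal{S}5$-$ELKy^r$-model $(\mathfrak{M}^c,\langle\Delta,F^\Delta,\vec{f}^\Delta\rangle)$ witnesses $\Gamma\not\models_{\mathcal{K}y^r\mathcal{S}5}\phi$, contradicting the hypothesis and completing the contrapositive argument.

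I expect no serious obstacle at this stage: essentially all the difficulty has been absorbed into the existence lemmas (Lem. \ref{lem:kaexistence}, Lem. \ref{lem:kyarexistence}) and the Truth Lemma, so the completeness theorem itself is a short assembly. The only points requiring a little care are the reduction of ``$\Gamma\not\vdash\phi$'' to ``$\Gamma\cup\{\neg\phi\}$ consistent'' (which relies on derivations being finitary and on the classicality of the propositional base) and the explicit verification that the point we land on is indeed in $W^c$ and that $\mathfrak{M}^c$ really belongs to $\mathcal{K}y^r\mathcal{S}5$ — both already handled by Prop. \ref{prop:worldexistence} and the well-definedness proposition, respectively.
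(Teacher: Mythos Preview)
Your proposal is correct and follows essentially the same approach as the paper: contrapositive, consistency of $\Gamma\cup\{\neg\phi\}$, Lindenbaum extension to a maximal consistent set, instantiation of a canonical world via Prop.~\ref{prop:worldexistence}, and an application of the Truth Lemma (Lem.~\ref{lem:truth}) to obtain a pointed countermodel. The paper's proof is slightly terser (it calls the consistency step ``obvious'' and does not restate that $\mathfrak{M}^c\in\mathcal{K}y^r\mathcal{S}5$), but the structure is identical.
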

\begin{proof}
Suppose $\Gamma\not\vdash_\mathbb{SKYR}\phi$, then $\Gamma\cup\{\neg\phi\}$ is obviously consistent. Now, let $\Gamma'$ be the extensions of $\Gamma\cup\{\neg\phi\}$ to a maximal consistent set. By the definition of the canonical model and Prop. \ref{prop:worldexistence}, there exists at least one world for this set in $W^c$, namely $\langle\Gamma',F^{\Gamma'},\vec{f}^{\Gamma'}\rangle$. By Lem. \ref{lem:truth}, we have $(\mathfrak{M}^c,\langle\Gamma',F^{\Gamma'},\vec{f}^{\Gamma'}\rangle)\models\Gamma\cup\{\neg\phi\}$ as $\Gamma\cup\{\neg\phi\}\subseteq\Gamma'$, and thus $\Gamma\cup\{\neg\phi\}$ is satisfiable. Therefore, we have $\Gamma\not\models_{\mathcal{K}y^r\mathcal{S}5}\phi$.
\end{proof}
\section{Expressivity comparisons}
As it was said at the beginning of the paper, the initial motivation was to study the dynamic extensions for the logic of knowing-why, with a first look at public announcements. As the logic $\mathbf{ELKy^r}$ was presented as some sort of workaround for problems concerning the logic $\mathbf{PAFKy}$(or more precise, concerning the classic style of axiomatization for "public announcement"-type logics), we now make expressivity comparisons between the newly introduced logics of this paper.\\

Before proceeding, it may additionally be notable that we find that $Ky_a^r(\phi,\psi)$, although sugested in the final section of \cite{XWS2016}, does not directly correspond to $[\phi]Ky_a\psi$. For this, one may easily imagine the following model $\mathfrak{M}$:
\begin{center}
\begin{tikzpicture}[shorten >=1pt,node distance=2cm and 4cm,on grid,auto]
\node[state, label=above left:$w_1$, align=center] (w_1) {$q$ \\ $s:p$};
\node[state, label=above left:$w_2$, align=center] (w_2) [right=of w_1] {$p,q$ \\ $t:p$};
\node[state, label=above left:$w_3$, align=center] (w_3) [below=of w_1] {$p,q$ \\ $r:p$};
\path[-] (w_1) edge node {$a$} (w_2)
               edge node {$a$} (w_3);
\path[-] (w_2) edge node {$a$} (w_3);
\draw[->,shorten >=1pt] (w_1) to [out=90,in=180,loop,looseness=4.8] node[above] {$a$} (w_1);
\draw[->,shorten >=1pt] (w_2) to [out=90,in=0,loop,looseness=4.8] node[above] {$a$} (w_2);
\draw[->,shorten >=1pt] (w_3) to [out=180,in=270,loop,looseness=4.8] node[below] {$a$} (w_3);
\end{tikzpicture}
\end{center}
Since $(\mathfrak{M},w_1)\not\models p$, we have that $(\mathfrak{M},w_1)\models [p]Ky_a q$ from the semantics of $[\cdot]$. At the same time, we find that $(\mathfrak{M},w_1)\not\models Ky_a^r(p,q)$ as, although $w_1$ is not considered in the evaluation of the $\mathcal{E}$-clause because of the before mentioned condition, we still find that there does not exists a uniform $t\in E$ for the left-to-consider worlds $w_2$ and $w_3$.\\

The main difference exploited here is the missing $\phi$-implication in the semantical definition of $Ky_a^r(\phi,\psi)$. In the following argument though, it can also be seen that even a corresponding modification has no possibility in providing an adequate translation. For this, we introduce a second concept from the original paper \cite{XWS2016}.
\begin{definition}[Factivity Property]
A model $\mathfrak{M}$ has the factivity property(is factive), if whenever $w\in\mathcal{E}(t,\phi)$, then $(\mathfrak{M},w)\models\phi$.
\end{definition}
Given a model $\mathfrak{M}=\langle W, E, \{R_a\mid a\in\mathcal{A}\},\mathcal{E}, V\rangle$, one may construct its factive companion $\mathfrak{M}^F=\langle W, E, \{R_a\mid a\in\mathcal{A}\},\mathcal{E}^F, V\rangle$ where
\[
\mathcal{E}^F(t,\phi)=\mathcal{E}(t,\phi)\setminus\{w\in W\mid (\mathfrak{M},w)\not\models\phi\}
\]
Obviously, for a factive model $\mathfrak{M}$, $\mathfrak{M}$ and $\mathfrak{M}^F$ coincide. The following lemma now asserts that the $\mathbf{ELKy}$-formulas are neutral in respect to facitvity.
\begin{lemma}[Xu, Wang, Studer \cite{XWS2016}]\label{lem:elkyfactivity}
For any $\phi\in\mathcal{L}_{ELKy}$, any $\mathbf{ELKy}$-model $\mathfrak{M}$ and any $w\in\mathcal{D}(\mathfrak{M})$, $(\mathfrak{M},w)\models\phi$ if and only if $(\mathfrak{M}^F,w)\models\phi$.
\end{lemma}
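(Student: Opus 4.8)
The plan is to argue by induction on the structure of $\phi\in\mathcal{L}_{ELKy}$. The decisive structural fact is that $\mathfrak{M}$ and $\mathfrak{M}^F$ share the same domain $W$, the same explanation set $E$, the same accessibility relations $\{R_a\mid a\in\mathcal{A}\}$, and the same valuation $V$; they differ only in that $\mathcal{E}$ is replaced by $\mathcal{E}^F$, and $\mathcal{E}^F(t,\psi)\subseteq\mathcal{E}(t,\psi)$ for every $t\in E$ and every $\psi$, with the deleted worlds being precisely those $u\in W$ for which $(\mathfrak{M},u)\not\models\psi$.

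For the base case $\phi=p$ both $(\mathfrak{M},w)\models p$ and $(\mathfrak{M}^F,w)\models p$ reduce to $w\in V(p)$, so the equivalence is immediate. For $\phi=\neg\psi$ and $\phi=\psi\land\chi$ the relevant clauses of $\models$ do not mention the explanation function, so the equivalence follows at once from the induction hypothesis applied to the strictly smaller subformulas. For $\phi=K_a\psi$, both $(\mathfrak{M},w)\models K_a\psi$ and $(\mathfrak{M}^F,w)\models K_a\psi$ unfold, using the shared relation $R_a$, to the statement ``for every $v$ with $(w,v)\in R_a$, $\psi$ holds at $v$''; applying the induction hypothesis to $\psi$ at each such $v$ gives the equivalence.

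The only case with real content is $\phi=Ky_a\psi$. Here $(\mathfrak{M},w)\models Ky_a\psi$ amounts to the conjunction of \emph{(1)} $(\mathfrak{M},w)\models K_a\psi$ and \emph{(2)} the existence of $t\in E$ with $v\in\mathcal{E}(t,\psi)$ for all $v$ with $(w,v)\in R_a$; and likewise for $\mathfrak{M}^F$ with $\mathcal{E}^F$ in place of $\mathcal{E}$. The equivalence of the two instances of \emph{(1)} is just the $K_a$ case already handled. For \emph{(2)}: the implication from the $\mathcal{E}^F$-version to the $\mathcal{E}$-version is trivial since $\mathcal{E}^F(t,\psi)\subseteq\mathcal{E}(t,\psi)$, so any witness $t$ for $\mathcal{E}^F$ works a fortiori for $\mathcal{E}$. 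Conversely, assume $(\mathfrak{M},w)\models Ky_a\psi$ and fix a witness $t$ for \emph{(2)}. Given any $v$ with $(w,v)\in R_a$, we have $v\in\mathcal{E}(t,\psi)$, and by conjunct \emph{(1)} we have $(\mathfrak{M},v)\models\psi$, so $v$ is not one of the worlds removed in forming $\mathcal{E}^F$; hence $v\in\mathcal{E}^F(t,\psi)$. Thus the same $t$ witnesses the $\mathcal{E}^F$-version of \emph{(2)}, and $(\mathfrak{M}^F,w)\models Ky_a\psi$.

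I expect this last step to be the only genuine obstacle, and it hinges on one observation: passing from $\mathcal{E}$ to $\mathcal{E}^F$ discards only worlds at which the relevant formula fails, while the $K_a\psi$-conjunct built into $Ky_a\psi$ guarantees that $\psi$ holds throughout the $R_a$-cone issuing from $w$ — so no world that matters for the truth of $Ky_a\psi$ at $w$ is lost. It is also worth noting in passing that $\mathfrak{M}^F$ is again a legitimate $\mathbf{ELKy}$-model: the condition $\mathcal{E}^F(e,\chi)=W$ for $\chi\in\Lambda$ survives because such $\chi$ are valid (nothing gets removed), and the \emph{modus ponens} closure condition for $\mathcal{E}^F$ follows from that for $\mathcal{E}$ by a one-line argument, so the statement $(\mathfrak{M}^F,w)\models\phi$ is meaningful in the intended sense.
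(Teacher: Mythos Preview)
Your proof is correct. Note, however, that the paper does not itself prove this lemma: it is quoted verbatim from \cite{XWS2016} and stated without proof. Your argument is essentially the specialization to $\mathcal{L}_{ELKy}$ of the paper's proof of the generalization, Lemma~\ref{lem:elkyrfactivity}, which proceeds by the same structural induction and isolates the $Ky$-case as the only nontrivial one, using exactly the observation you make---that the $K_a\psi$-conjunct built into $Ky_a\psi$ ensures $(\mathfrak{M},v)\models\psi$ at every accessible $v$, so nothing relevant is removed when passing to $\mathcal{E}^F$.
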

We obtain the following generalization for $\mathbf{ELKy^r}$.
\begin{lemma}\label{lem:elkyrfactivity}
For any $\phi\in\mathcal{L}_{ELKy^r}$, any $\mathbf{ELKy^r}$-model $\mathfrak{M}$ and any $w\in\mathcal{D}(\mathfrak{M})$, $(\mathfrak{M},w)\models\phi$ if and only if $(\mathfrak{M}^F,w)\models\phi$.
\end{lemma}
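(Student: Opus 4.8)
The plan is to prove the equivalence by a straightforward induction on the structure of $\phi\in\mathcal{L}_{ELKy^r}$, in the same spirit as Lemma \ref{lem:elkyfactivity}. The key preliminary observation is that $\mathfrak{M}$ and $\mathfrak{M}^F$ agree on the domain $W$, on all relations $R_a$ and on the valuation $V$, and differ only in the explanation function, where $\mathcal{E}^F(t,\phi)=\mathcal{E}(t,\phi)\cap\{w\in W\mid(\mathfrak{M},w)\models\phi\}$. As a side check I would first note that $\mathfrak{M}^F$ is again a genuine $\mathbf{ELKy^r}$-model: the requirement $\mathcal{E}^F(e,\phi)=W$ for $\phi\in\Lambda$ holds because every world satisfies a formula in the tautology ground, and the distribution $\mathcal{E}^F(s,\phi\rightarrow\psi)\cap\mathcal{E}^F(t,\phi)\subseteq\mathcal{E}^F((s\cdot t),\psi)$ follows since any world $v$ in the left-hand set lies in $\mathcal{E}(s,\phi\rightarrow\psi)\cap\mathcal{E}(t,\phi)\subseteq\mathcal{E}((s\cdot t),\psi)$ and moreover satisfies $\phi\rightarrow\psi$ and $\phi$, hence $\psi$, in $\mathfrak{M}$.

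The base case $\phi=p$ is immediate since $V$ is unchanged, and the inductive cases $\phi=\neg\psi$, $\phi=\psi\land\chi$ and $\phi=K_a\psi$ are routine: their truth conditions only quantify over $W$ and $R_a$, which coincide in the two models, so each reduces at once to the induction hypothesis applied to the immediate subformulas.

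The only case requiring attention is $\phi=Ky_a^r(\psi,\chi)$. Here $(\mathfrak{M},w)\models Ky_a^r(\psi,\chi)$ unfolds to: there is some $t\in E$ such that for every $v\in W$ with $(w,v)\in R_a$ and $(\mathfrak{M},v)\models\psi$ one has both $v\in\mathcal{E}(t,\chi)$ and $(\mathfrak{M},v)\models\chi$. I would show the very same $t$ witnesses the corresponding statement over $\mathfrak{M}^F$, and conversely. For the left-to-right direction, take $v$ with $(w,v)\in R_a$ and $(\mathfrak{M}^F,v)\models\psi$; by the induction hypothesis $(\mathfrak{M},v)\models\psi$, so $v\in\mathcal{E}(t,\chi)$ and $(\mathfrak{M},v)\models\chi$, and the latter is exactly the condition for $v$ not to be deleted in forming $\mathcal{E}^F(t,\chi)$, whence $v\in\mathcal{E}^F(t,\chi)$; applying the induction hypothesis once more yields $(\mathfrak{M}^F,v)\models\chi$. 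The right-to-left direction is symmetric, now using the inclusion $\mathcal{E}^F(t,\chi)\subseteq\mathcal{E}(t,\chi)$ together with the induction hypothesis for $\psi$ and $\chi$.

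I do not expect a genuine obstacle; the one point worth isolating is the observation underlying the $Ky_a^r$-step, namely that clause (2) in the semantics of $Ky_a^r(\psi,\chi)$ already restricts attention to worlds satisfying $\chi$, so these are precisely the worlds left untouched when passing from $\mathcal{E}$ to $\mathcal{E}^F$. In the unrelativized setting of Lemma \ref{lem:elkyfactivity} this filtering was supplied externally by the conjunct $K_a\chi$ in the definition of $Ky_a\chi$; in $\mathbf{ELKy^r}$ it is built into the operator itself, which is why the argument goes through essentially unchanged.
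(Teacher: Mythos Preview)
Your proposal is correct and follows essentially the same route as the paper: both argue by induction on formulas, dismiss the propositional and $K_a$ cases as trivial since only $\mathcal{E}$ changes, and handle $Ky_a^r(\psi,\chi)$ by using clause~(2) of its semantics to ensure that the relevant $v$ satisfy $\chi$ and hence survive the passage from $\mathcal{E}$ to $\mathcal{E}^F$, with the inclusion $\mathcal{E}^F\subseteq\mathcal{E}$ handling the converse. Your write-up is in fact slightly more careful than the paper's, as you verify that $\mathfrak{M}^F$ is again a model and spell out the use of the induction hypothesis on the condition $\psi$, which the paper leaves implicit.
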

\begin{proof}
Proof by induction on the structure of formulas. We leave the classical propositional and modal cases unconsidered, as $\mathfrak{M}$ is only possibly different from $\mathfrak{M}^F$ in the explanation function. Thus consider $Ky_a^r(\phi,\psi)$:\\

Suppose $(\mathfrak{M},w)\models Ky_a^r(\phi,\psi)$, i.e. $\exists t\in E:\forall v\in W:(w,v)\in R_a$ and $(\mathfrak{M},v)\models\phi$ implies $(\mathfrak{M},v)\models\psi$ and $v\in\mathcal{E}(t,\psi)$. Thus, for all those $v$, $v\not\in\{u\mid (\mathfrak{M},u)\not\models\psi\}$, i.e. $v\in\mathcal{E}^F(t,\psi)$ as $(\mathfrak{M},v)\models\psi$. By (IH), we have $(\mathfrak{M}^F,w)\models Ky_a^r(\phi,\psi)$.\\

Suppose otherwise that $(\mathfrak{M}^F,w)\models Ky_a^r(\phi,\psi)$, i.e. $\exists t\in E:\forall v\in W:(w,v)\in R_a$ and $(\mathfrak{M}^F,v)\models\phi$ implies $(\mathfrak{M}^F,v)\models\psi$ and $v\in\mathcal{E}^F(t,\psi)$. By $v\in\mathcal{E}^F(t,\psi)$, we automatically have $v\in\mathcal{E}(t,\psi)$ and by (IH), we obtain $(\mathfrak{M},w)\models Ky_a^r(\phi,\psi)$.
\end{proof}
\begin{lemma}\label{lem:pafkynotfactive}
There exists a formula $\phi\in\mathcal{L}_{PAFKy}$, a model $\mathfrak{M}$ and $w\in\mathcal{D}(\mathfrak{M})$ such that $(\mathfrak{M},w)\models\phi$ but $(\mathfrak{M}^F,w)\not\models\phi$.
\end{lemma}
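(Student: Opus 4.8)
The plan is to exhibit an explicit $\mathcal{S}5$-model together with a world and a formula of the form $[\chi]Ky_a\psi$ witnessing the failure of factivity‑neutrality for $\mathbf{PAFKy}$. The guiding idea is that, in contrast to the situation for $\mathbf{ELKy}$ and $\mathbf{ELKy^r}$ (Lemmas \ref{lem:elkyfactivity} and \ref{lem:elkyrfactivity}), forming the factive companion and performing a public announcement do not commute: passing to $\mathfrak{M}^F$ deletes a world $v$ from $\mathcal{E}(t,\psi)$ as soon as $(\mathfrak{M},v)\not\models\psi$, i.e.\ \emph{before} any announcement, whereas in $\mathfrak{M}|\chi$ that same world may still legitimately act as an explanation of $\psi$, because the announcement of $\chi$ can flip the truth value of $\psi$ at $v$. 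So I want a genuinely modal subformula $\psi$ that is false at some reachable world $v$ of $\mathfrak{M}$ but becomes true at $v$ after announcing $\chi$, together with a non‑factive explanation assignment for $\psi$ living exactly on that world.

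Concretely, I would take two propositional letters $q,r$, the formula $\phi := [\neg q]\,Ky_aK_ar$, and the model $\mathfrak{M}=\langle\{w_1,w_2\},E,\{R_b\mid b\in\mathcal{A}\},\mathcal{E},V\rangle$ with $R_b=\{w_1,w_2\}\times\{w_1,w_2\}$ for every agent $b$ (so every $R_b$ is $\mathcal{S}5$), $V(r)=\{w_1\}$, $V(q)=\{w_2\}$, and $E$ the closure of $\{e,s\}$ under $\cdot$. The explanation function $\mathcal{E}$ is taken to be the least one satisfying the two model conditions (with $\Lambda$ the propositional tautologies) together with the single extra stipulation $\mathcal{E}(s,K_ar)=\{w_1\}$. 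This is \emph{not} factive, since $(\mathfrak{M},w_1)\not\models K_ar$ because $w_1$ sees $w_2\notin V(r)$; in fact $K_ar$ is false at both worlds of $\mathfrak{M}$. One then checks that $\mathfrak{M}^F$ is again a legitimate model and, crucially, that $\mathcal{E}^F(t,K_ar)=\mathcal{E}(t,K_ar)\setminus\{w_1,w_2\}=\emptyset$ for \emph{every} $t\in E$.

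The two verifications are short. For $\mathfrak{M}$: $(\mathfrak{M},w_1)\models\neg q$, and $\llbracket\neg q\rrbracket_{\mathfrak{M}}=\{w_1\}$, so in $\mathfrak{M}|\neg q$ the domain collapses to $\{w_1\}$; hence $K_ar$ and therefore $K_aK_ar$ hold at $w_1$, while $\mathcal{E}'(s,K_ar)=\mathcal{E}(s,K_ar)\cap\{w_1\}=\{w_1\}$, so $s$ witnesses $Ky_aK_ar$ at $w_1$ and $(\mathfrak{M},w_1)\models\phi$. For $\mathfrak{M}^F$: again $(\mathfrak{M}^F,w_1)\models\neg q$ and the domain of $\mathfrak{M}^F|\neg q$ is $\{w_1\}$, so $K_aK_ar$ still holds at $w_1$, but now $\mathcal{E}^F(t,K_ar)\cap\{w_1\}=\emptyset$ for every $t$, so there is no explanation for $K_ar$ on the surviving world, giving $(\mathfrak{M}^F|\neg q,w_1)\not\models Ky_aK_ar$ and hence $(\mathfrak{M}^F,w_1)\not\models\phi$.

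The routine points to dispatch are that $\mathfrak{M}^F$ satisfies the two $\mathcal{E}$-conditions, and that taking the least $\mathcal{E}$ with the extra stipulation does not secretly force $w_1$ into some $\mathcal{E}(t,K_ar)$ in a way that would survive the factive cut — which it cannot, since every world of $\mathfrak{M}$ falsifies $K_ar$ and so the factive cut empties $\mathcal{E}(\cdot,K_ar)$ regardless of what the closure adds. The only genuinely delicate design choice, the part one must get right by hand, is picking $\psi$ and $\chi$ so that $\psi$ is falsified at a reachable world of $\mathfrak{M}$ yet verified there after announcing $\chi$: without such a truth‑value flip the factive companion and the update would commute and no counterexample could arise, and this is precisely why $\psi$ cannot be atomic; the nested $K_ar$ under the announcement $[\neg q]$ is the minimal device achieving it.
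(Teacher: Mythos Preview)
Your proof is correct and follows essentially the same approach as the paper: both exhibit a formula of the shape $[\chi]Ky_aK_a\psi$ in a model where $K_a\psi$ is globally false (so the factive cut empties $\mathcal{E}(\cdot,K_a\psi)$), yet the announcement $\chi$ removes the $\psi$-refuting world and makes $K_a\psi$ true on the surviving worlds. Your two-world model is in fact slightly more economical than the paper's three-world version, but the mechanism is identical.
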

\begin{proof}
Consider the following model $\mathfrak{M}$(reflexive arrows are not shown, but expected)
\begin{center}
\begin{tikzpicture}[shorten >=1pt,node distance=2cm and 4cm,on grid,auto]
\node[state, label=above left:$1$, align=center] (w_1) {$p,q$ \\ $t:K_aq$};
\node[state, label=above left:$2$, align=center] (w_2) [right=of w_1] {\\ $t:K_aq$};
\node[state, label=above left:$3$, align=center] (w_3) [below=of w_1] {$p,q$ \\ $t:K_aq$};
\path[-] (w_1) edge node {$a$} (w_2)
               edge node {$a$} (w_3)
         (w_2) edge node {$a$} (w_3);
\end{tikzpicture}
\end{center}
and its factive counterpart $\mathfrak{M}^F$, where we loose all explanations for $K_aq$ as in every world $x$, by $(\mathfrak{M},2)\not\models q$, we find that $(\mathfrak{M},x)\not\models K_aq$ as the relation $R_a$ is total among these worlds:
\begin{center}
\begin{tikzpicture}[shorten >=1pt,node distance=2cm and 4cm,on grid,auto]
\node[state, label=above left:$1$, align=center] (w_1) {$p,q$ \\};
\node[state, label=above left:$2$, align=center] (w_2) [right=of w_1] {\\};
\node[state, label=above left:$3$, align=center] (w_3) [below=of w_1] {$p,q$ \\};
\path[-] (w_1) edge node {$a$} (w_2)
               edge node {$a$} (w_3)
         (w_2) edge node {$a$} (w_3);
\end{tikzpicture}
\end{center}
Consider the formula $\phi:=[p]Ky_aK_aq$. We have $(\mathfrak{M},1)\models\phi$. For this, first note that $(\mathfrak{M},1)\models p$. Second, $\forall v\in W:(1,v)\in R_a$ and $(\mathfrak{M},v)\models p$ leaves us with worlds $1,3$. For those, we obtain both instances of $(\mathfrak{M}|p,v)\models K_a q$ as $[[p]]_\mathfrak{M}=\{1,3\}$ and $(\mathfrak{M},v)\models q$. At last, note that $1,3\in\mathcal{E}(t,K_aq)$.\\
At the same time, we have $(\mathfrak{M}^F,1)\not\models\phi$ since, although similarly $[[p]]_\mathfrak{M}=\{1,3\}$, there exits no $t\in E$ such that $1,3\in\mathcal{E}^F(t,K_aq)$ as $\mathcal{E}^F(t,K_aq)$ has to be empty for every $t$.
\end{proof}
\begin{theorem}\label{thm:pafkyelkyrexpressivity}
$\mathbf{PAFKy}$ and $\mathbf{ELKy^r}$ are not equally expressive.
\end{theorem}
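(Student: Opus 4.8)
The plan is to obtain the theorem as an immediate consequence of the two preceding lemmas, Lemma~\ref{lem:elkyrfactivity} and Lemma~\ref{lem:pafkynotfactive}, via a short contradiction argument built on the differing behaviour of the two logics under passage to the factive companion. First I would spell out what ``equally expressive'' means here: it requires in particular that every formula $\phi\in\mathcal{L}_{PAFKy}$ admit an equivalent $\psi\in\mathcal{L}_{ELKy^r}$, where equivalence means that $(\mathfrak{N},v)\models\phi$ iff $(\mathfrak{N},v)\models\psi$ for every pointed $\mathcal{S}5$-model $(\mathfrak{N},v)$ on which both languages are interpreted. Assuming this, I apply it to the witness formula $\phi:=[p]Ky_aK_aq$ supplied by Lemma~\ref{lem:pafkynotfactive} and fix a corresponding $\psi\in\mathcal{L}_{ELKy^r}$.

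Next I would invoke the model $\mathfrak{M}$ and world $1$ from Lemma~\ref{lem:pafkynotfactive}, noting first that the factive companion $\mathfrak{M}^F$ is again a legitimate $\mathcal{S}5$-$\mathbf{ELKy^r}$-model: it keeps the same $W$, the same $\mathcal{S}5$-relations $\{R_a\mid a\in\mathcal{A}\}$ and the same explanation domain $E$, and a routine check shows that $\mathcal{E}^F$ still satisfies the tautology-ground clause (because $\Lambda$ contains only valid formulas, so the deleted set is empty) and the distribution clause (because truth is closed under modus ponens). Then the chain is: $(\mathfrak{M},1)\models\phi$ by Lemma~\ref{lem:pafkynotfactive}; hence $(\mathfrak{M},1)\models\psi$ by the assumed equivalence; hence $(\mathfrak{M}^F,1)\models\psi$ by Lemma~\ref{lem:elkyrfactivity}, since $\psi\in\mathcal{L}_{ELKy^r}$; hence $(\mathfrak{M}^F,1)\models\phi$ again by the assumed equivalence. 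This directly contradicts the other assertion of Lemma~\ref{lem:pafkynotfactive}, namely $(\mathfrak{M}^F,1)\not\models\phi$. Therefore no such $\psi$ exists, so $\mathbf{ELKy^r}$ is not at least as expressive as $\mathbf{PAFKy}$, and in particular the two logics are not equally expressive.

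Since essentially all of the substance is already contained in the two lemmas, I do not expect a real obstacle in the present argument; the only delicate point is bookkeeping about the model class over which expressivity is compared — one must make sure it is a class on which both languages make sense and on which Lemma~\ref{lem:elkyrfactivity} applies — and this is harmless because $\mathfrak{M}^F$ is itself such a model. If anything, the genuinely hard steps are the ones already carried out: the inductive proof that $\mathbf{ELKy^r}$ is insensitive to factivity (Lemma~\ref{lem:elkyrfactivity}) and, above all, the design of the small separating model in Lemma~\ref{lem:pafkynotfactive}, whose point is that an announcement of $p$ can erase all worlds violating $q$ and thereby make $K_aq$ true at the surviving worlds, while in the factive companion the explanations for $K_aq$ have already been stripped away and cannot be recovered. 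The present theorem is just the juxtaposition of these two facts.
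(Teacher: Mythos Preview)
Your argument is correct and is essentially identical to the paper's own proof: assume a translation, then chain $(\mathfrak{M},w)\models\phi \Leftrightarrow (\mathfrak{M},w)\models\phi^t \Leftrightarrow (\mathfrak{M}^F,w)\models\phi^t \Leftrightarrow (\mathfrak{M}^F,w)\models\phi$ via Lemma~\ref{lem:elkyrfactivity}, contradicting Lemma~\ref{lem:pafkynotfactive}. Your additional remark that $\mathfrak{M}^F$ is again a legitimate model is a useful piece of bookkeeping that the paper leaves implicit.
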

\begin{proof}
Suppose that $\mathbf{PAFKy}$ and $\mathbf{ELKy^r}$ are equally expressive. Thus, there exists a translation function $t$ from formulas of $\mathbf{PAFKy}$ to formulas of $\mathbf{ELKy^r}$ such that $\phi\equiv\phi^t$ for any $\phi\in\mathcal{L}_{PAFKy}$. Then the following diagram
\begin{center}
\begin{tikzpicture}
\node at (0,1.5) {$(\mathfrak{M},w)\models\phi$};
\node at (0,0) {$(\mathfrak{M},w)\models\phi^t$};
\node at (3.35,0) {$(\mathfrak{M}^F,w)\models\phi^t$};
\node at (3.3,1.5) {$(\mathfrak{M}^F,w)\models\phi$};
\node at (0.2,0.75) {$t$};
\node at (3.55,0.75) {$t$};
\node at (1.6,0.3) {Lem. \ref{lem:elkyrfactivity}};
\draw[implies-implies,double equal sign distance] (0,1.25) -- (0,0.25);
\draw[implies-implies,double equal sign distance] (1,0) -- (2.2,0);
\draw[implies-implies,double equal sign distance] (3.35,0.25) -- (3.35,1.25);
\end{tikzpicture}
\end{center}
may be completed at the top for any $\phi\in\mathcal{L}_{PAFKy}$, any model $\mathfrak{M}$ and any $w\in\mathcal{D}(\mathfrak{M})$, which is a contradiction to Lem. \ref{lem:pafkynotfactive}.
\end{proof}
\begin{corollary}
$\mathbf{PAFKy^r}$ has greater expressivity than $\mathbf{ELKy^r}$.\footnote{Note, that in a similar way one may obtain a different proof for Thm. \ref{thm:pafkygreaterelky}, using Lem. \ref{lem:elkyfactivity}.}
\end{corollary}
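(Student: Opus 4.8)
The plan is to argue in parallel with the proof of Theorem~\ref{thm:pafkyelkyrexpressivity}, replacing the use of Lemma~\ref{lem:elkyfactivity} by its relativized strengthening Lemma~\ref{lem:elkyrfactivity} and transporting the separating example of Lemma~\ref{lem:pafkynotfactive} into the relativized language. First I would note that, since $\mathcal{L}_{PAFKy^r}$ is by definition just the augmentation of $\mathcal{L}_{ELKy^r}$ by the $[\phi]$-construct and the semantics of all shared connectives is unchanged, $\mathbf{PAFKy^r}$ is an extension of $\mathbf{ELKy^r}$ and hence cannot be less expressive; so only strictness of the inequality has to be established.

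For strictness I would suppose, towards a contradiction, that there is a translation $t\colon\mathcal{L}_{PAFKy^r}\to\mathcal{L}_{ELKy^r}$ with $\phi\equiv\phi^t$ for every $\phi\in\mathcal{L}_{PAFKy^r}$. Then, for an arbitrary model $\mathfrak{M}$ and world $w\in\mathcal{D}(\mathfrak{M})$, the chain of equivalences
\[
(\mathfrak{M},w)\models\phi \;\Leftrightarrow\; (\mathfrak{M},w)\models\phi^t \;\Leftrightarrow\; (\mathfrak{M}^F,w)\models\phi^t \;\Leftrightarrow\; (\mathfrak{M}^F,w)\models\phi,
\]
whose middle step is Lemma~\ref{lem:elkyrfactivity} and whose outer steps are the assumed equivalence $\phi\equiv\phi^t$, shows that every $\mathbf{PAFKy^r}$-formula would be neutral with respect to factivity. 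It thus suffices to exhibit a single $\mathbf{PAFKy^r}$-formula that is \emph{not} factivity-neutral.

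Such a formula is furnished by Lemma~\ref{lem:pafkynotfactive} together with the observation that $Ky_a\psi$ and $Ky_a^r(\top,\psi)$ are semantically identical: taking the same two models $\mathfrak{M}$ and $\mathfrak{M}^F$ displayed there and the formula $[p]Ky_a^r(\top,K_aq)\in\mathcal{L}_{PAFKy^r}$, the computation of Lemma~\ref{lem:pafkynotfactive} goes through verbatim and yields $(\mathfrak{M},1)\models[p]Ky_a^r(\top,K_aq)$ while $(\mathfrak{M}^F,1)\not\models[p]Ky_a^r(\top,K_aq)$, contradicting the factivity-neutrality derived above. I do not expect a genuine obstacle here, since the whole argument is a routine adaptation of material already established; the only step needing (minor) care is re-running the update computation in $\mathfrak{M}^F$ for the relativized operator, i.e. checking that in the announced submodel no uniform witness $t\in E$ survives with $1,3\in\mathcal{E}^F(t,K_aq)$. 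This is immediate from $(\mathfrak{M}^F,2)\not\models q$ forcing $K_aq$ to be false at every world, hence $\mathcal{E}^F(t,K_aq)=\emptyset$ for all $t$, which is exactly what is already verified in the proof of Lemma~\ref{lem:pafkynotfactive}.
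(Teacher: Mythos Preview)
Your argument is correct and follows the same factivity-based strategy as the paper. The only difference is packaging: the paper simply invokes Theorem~\ref{thm:pafkyelkyrexpressivity} to obtain a $\mathbf{PAFKy}$-formula inexpressible in $\mathbf{ELKy^r}$ and then uses the embedding $\mathbf{PAFKy}\hookrightarrow\mathbf{PAFKy^r}$ via $Ky_a\psi\mapsto Ky_a^r(\top,\psi)$, whereas you unroll that theorem's proof directly for $\mathbf{PAFKy^r}$ with the translated formula $[p]Ky_a^r(\top,K_aq)$. (One minor slip: the proof of Theorem~\ref{thm:pafkyelkyrexpressivity} already uses Lemma~\ref{lem:elkyrfactivity}, not Lemma~\ref{lem:elkyfactivity}, so no replacement is needed there.)
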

\begin{proof}
Since $\mathbf{ELKy^r}$ is contained in $\mathbf{PAFKy^r}$, it can't be more expressive. By Thm. \ref{thm:pafkyelkyrexpressivity} and Lem. \ref{lem:pafkynotfactive}, there exists a formula $\phi\in\mathcal{L}_{PAFKy}$ which can't be expressed in $\mathbf{ELKy^r}$. As $\mathbf{PAFKy^r}$ contains $\mathbf{PAFKy}$, via the association $Ky_a\phi\sim Ky_a^r(\top,\phi)$, we have greater expressivity.
\end{proof}
By this argument it can be seen that a relativized knowing-why operator does not suffice to imitate public announcement behavior in the logic of knowing why, thus leaving a rest of void it initially intended to fill. Although we leave further inspections for future work, we still want to advocate for the consideration of the use of non-standard semantics for public announcements, namely context-dependent semantics, which seems like a promising alternative way of dealing with the laid out problems.
\subsection{Using non-standard semantics}
The following semantic concepts for public announcement logic are mainly due to Wang in their current form, see \cite{WC2013} and \cite{Wan2006}. For providing a small recollection of the basic notions of context-dependent semantics of the operator $[\phi]$, we consider another semantic relation $\models_\rho$, similar to the classical $\models$, but induced with a formula $\rho\in\mathcal{L}_{PAFKy^r}$ providing a specific evaluation context. For the behavior of this new relation, one may consider the following(only the interesting cases are provided, i.e. the out-carrying of the relation over $\land,\neg$ is obviously following the old structure as the main difference lies in the many-world context).
\begin{align*}
&(\mathfrak{M},w)\models\phi\Leftrightarrow (\mathfrak{M},w)\models_\top\phi\\
&(\mathfrak{M},w)\models_\rho p\text{ iff }w\in V(p)\\
&\dots\\
&(\mathfrak{M},w)\models_\rho K_a\phi\text{ iff }\forall v\in W:(w,v)\in R_a\text{ and }(\mathfrak{M},v)\models_\top\rho\text{ implies }(\mathfrak{M},v)\models_\rho\phi\\
&(\mathfrak{M},w)\models_\rho [\psi]\phi\text{ iff }(\mathfrak{M},w)\models_\top\text{ implies }(\mathfrak{M},w)\models_{\rho\land\psi}\phi
\end{align*}
As it was shown in \cite{WC2013}, the composition axiom in this semantics turns out to be much simpler with
\begin{equation*}
[\phi][\psi]\chi\leftrightarrow [\phi\land\psi]\chi
\end{equation*}
This new induced handling of compositions of public announcement should make it possible to find a simple reduction axiom like $\phi\rightarrow Ky_a^r(\phi\land\psi,\chi)$ as a proposition. But on deeper insights, the whole idea of a conditionalized versions of the knowing-why operator may(or should) be completely unnecessary in this new context. The axiomatization and the more intensive study of this semantics in this context are left as future work.\\

Another use of different semantics may lie in restructuring the semantics of $Ky_a^r$ itself. Such a different semantic definition shall obviously provide a possibility for translation between the relativized versions and the versions incorporating the public announcements. This, for example, is enabled by the following semantics:
\[
(\mathfrak{M},w)\models Ky_a^r(\phi,\psi)\text{ iff }\exists t\in E:\forall v\in W: (w,v)\in R_a\text{ and }(\mathfrak{M},v)\models\phi\text{ implies }(\mathfrak{M}|\phi,w)\models\psi\text{ and }v\in\mathcal{E}(t,\psi)
\]
Obviously, through the enforcement of the $\phi$-restricted model \emph{only} in the continuing evaluation of $\phi$, we avoid the problems laid out before. From this, we may translate $[\phi]Ky_a\psi$ to $\phi\rightarrow Ky_a^r(\phi,\psi)$, although it seems to require a quite different approach of axiomatization. Additionally, the usual idea of introducing a relativized operator to \emph{leave} the context updated models, is discarded which doesn't get along with the usual spirit.
\section{Conclusions}
In this paper, the logic of knowing why under the extension with public announcement operators for formulas was addressed. Through the difficulties arising with providing of a reduction-based axiomatic system, we considered another logical operator, namely the conditionalized version of the basic $Ky$-operator to provide a partial workaround. Following to this, as the main result of this paper, we proved the newly introduced axiomatic system concerning this logic using the relativized operator as being sound and complete with respect to the basic $\mathcal{S}5$-class of models following the definition of the initial paper \cite{XWS2016}. A conditionalized version of the canonical model presented in \cite{XWS2016} and \cite{WF2013},\cite{WF2014} is here provided in order to achieve these results. In the following section, we then considered the problem of expressivity between the different logics in discourse, where we found that $\mathbf{ELKy^r}$ does not fulfill its reduction promise as hoped and so lies as an intermediate among $\mathbf{ELKy}$, $\mathbf{PAFKy}$ and $\mathbf{PAFKy^r}$ concerning its expressive power.\\

The situation was at first sight similar to the problem of a reduction style axiomatization of common knowledge with public announcement operators(or similar dynamic notions), as examined in \cite{BMS1998}, \cite{BEK2006}. The main difference and cause for problems to those approaches is the sensitivity of the argument of the knowing why operator, as the core syntactical structure of a formula is needed for the evaluation of the $\mathcal{E}$-function and we thus can't use similar formulas, in the sense of being equal under satisfaction. These problems did not arise with previous attempts of adding public announcements to other non-standard epistemic logics, e.g. \cite{WF2013}, \cite{WF2014}, \cite{EGW2016}, as the non-classical operators in these contexts concerned objects disconnected from the set of well-formed formulas and the other classical Kripkean parts of the model definition.
\bibliographystyle{plain}
\bibliography{ref}

\begin{thebibliography}{10}

\bibitem{AN2005}
Sergei Artemov and Elena Nogina.
\newblock Introducing justification into epistemic logic.
\newblock {\em Journal of Logic and Computation}, 15(6):1059--1073, 2005.

\bibitem{BMS1998}
Alexandru Baltag, Lawrence Moss, and Slawomir Solecki.
\newblock The logic of public announcements, common knowledge, and private
  suspicions.
\newblock In {\em Proceedings of the 7th conference on Theoretical aspects of
  rationality and knowledge}, pages 43--56, 1998.

\bibitem{Fit2005}
Melvin Fitting.
\newblock The logic of proofs, semantically.
\newblock {\em Annals of Pure and Applied Logic}, 132(1):1--25, 2005.

\bibitem{Hin1962}
Jaakko Hintikka.
\newblock {\em Knowledge and Belief: An Introduction to the Logic of the Two
  Notions}.
\newblock Cornell University Press, 1962.
\newblock Ithaca N.Y.

\bibitem{Pla1989}
Jan Plaza.
\newblock Logics of public communications.
\newblock In {\em Proceedings of the 4th International Symposium on
  Methodologies for Intelligent Systems}, pages 201--216, 1989.

\bibitem{Ren2008}
Bryan Renne.
\newblock {\em Dynamic epistemic logic with justification}.
\newblock PhD thesis, City University of New York, 2008.

\bibitem{Ren2011}
Bryan Renne.
\newblock Public communications in justification logic.
\newblock {\em Journal of Logic and Computation}, 21(6):1005--1034, 2011.

\bibitem{Ren2012}
Bryan Renne.
\newblock Multi-agent justification logic: communication and evidence
  elimination.
\newblock {\em Synthese}, 185(1):43--82, 2012.

\bibitem{BEK2006}
Johan van Benthem, Jan van Eijck, and Barteld Kooi.
\newblock Logics of communication and change.
\newblock {\em Information and Computation}, 204(11):1620--1662, 2006.

\bibitem{DHK2007}
Hans van Ditmarsch, Wiebe van~der Hoek, and Barteld Kooi.
\newblock {\em Dynamic Epistemic Logic}.
\newblock Springer Science/Business Media, 1 edition, 2007.
\newblock Synthese Library.

\bibitem{EGW2016}
Jan van Eijck, Malvin Gattinger, and Yanjing Wang.
\newblock {\em Knowing Values and Public Inspection}, pages 77--90.
\newblock Springer Berlin Heidelberg, Berlin, Heidelberg, 2017.

\bibitem{Wan2006}
Yanjing Wang.
\newblock Indexed semantics and its application in modelling interactive
  unawareness, 2006.

\bibitem{Wan2018}
Yanjing Wang.
\newblock Beyond knowing that: A new generation of epistemic logics.
\newblock In Hans van Ditmarsch and Gabriel Sandu, editors, {\em Jaakko
  Hintikka on Knowledge and Game-Theoretical Semantics}, Outstanding
  Contributions to Logic, pages 499--533. Springer, 2018.

\bibitem{WC2013}
Yanjing Wang and Qinxiang Cao.
\newblock On axiomatizations of public announcement logic.
\newblock {\em Synthese}, 190(1):103--134, 2013.

\bibitem{WF2013}
Yanjing Wang and Jie Fan.
\newblock Knowing that, knowing what, and public communication: Public
  announcement logic with kv operators.
\newblock In {\em Proceedings of the Twenty-Third International Joint
  Conference on Artificial Intelligence}, IJCAI '13, pages 1147--1154. AAAI
  Press, 2013.

\bibitem{WF2014}
Yanjing Wang and Jie Fan.
\newblock Conditionally knowing what.
\newblock In {\em Proceedings of the 10th Conference on Advances in Modal
  Logic}, AiML '14, pages 569--587. College Publications, 2014.

\bibitem{XWS2016}
Chao Xu, Yanjing Wang, and Thomas Studer.
\newblock A logic of knowing why.
\newblock {\em ArXiv e-prints, arXiv:1609.06405 [cs.AI]}, 2016.

\end{thebibliography}

\end{document}